\theoremstyle{definition} 
\newtheorem{theo}{Theorem}[section]
\newtheorem{defi}[theo]{Definition}
\newtheorem{prop}[theo]{Proposition}
\newtheorem{lemm}[theo]{Lemma}
\newtheorem{cor}[theo]{Corollary}
\newtheorem{rem}[theo]{Remark}
\newtheorem*{claim*}{Claim}
\newcommand{\N}{\mathbb{N}}
\newcommand{\Z}{\mathbb{Z}}
\newcommand{\Q}{\mathbb{Q}}
\newcommand{\diff}{\frac{d}{d \lambda}}
\newcommand{\dtwo}{\frac{d^2}{d \lambda^2}}
\newcommand{\la}{\lambda}
\newcommand{\q}[1]{\left[ #1\right]_q}
\newcommand{\Rla}{R[[\la]]}
\newcommand{\Rcompla}{R\left\langle \la \right\rangle }
\newcommand{\Fil}{\text{Fil}}
\newcommand{\modphib}{\mod \q{p^{s+1}} \varphi^{s+1} ( B^{(s)}(m) )}
\newcommand{\Qlalog}{Q((\la))\left[ \log_q \la \right]}
\newcommand{\MICHB}{\left(H_{0}, \nabla ,\varphi_{H_{0}}, \Fil^{\bullet} H_{0} \right)}
\title{$q$-deformation with ($\varphi, \Gamma$) structure of the de Rham cohomology of the Legendre family of elliptic curves}
\author{Ryotaro Shirai}
\date{\today}
\begin{document}
	\maketitle
	\begin{abstract}
		In the late '60s, B. Dwork studied a Frobenius structure compatible with the classical hypergeometric differential equation with parameters $\left(\frac{1}{2},\frac{1}{2} ; 1 \right)$ by analyzing behavior of solutions of the differential equation under Frobenius transformation.
		Recently, P. Scholze conjectured the existence of $q$-de Rham cohomology groups for any $\Z$-scheme. 
		In this paper, we give a Frobenius structure compatible with the $q$-hypergeometric differential equation with parameters $(q^{\frac12},q^{\frac12};q)$ by showing a $q$-analogue of some results of Dwork. 
		This construction gives a $q$-deformation with $(\varphi,\Gamma)$-structure over $\Z_p[[q-1]][[\la]]$ of the de Rham cohomology of the $p$-adic Legendre family of elliptic curves which has Frobenius structure and connection.
	\end{abstract}
		
	\section{Introduction}
	\label{introduction}
	Let $p$ be an odd prime number.
	Let	$h(\la) = \sum_{i=0}^{\frac{p-1}{2}} {\binom{\frac{p-1}{2}}{i} }^2 \la^i$ be the Hasse polynomial.
	Let $B=\Z_p \left\langle \la, \frac{1}{\la (1-\la)h(\la)} \right\rangle$ be the $p$-adic completion of the ring $\Z_p\left[ \la,\frac{1}{\la (1-\la)h(\la)} \right] $.
	We consider the $p$-adic Legendre family of elliptic curves
	\[
	E=\text{Proj} \left( B[X,Y,Z]/(Y^2Z -X(X-Z)(X-\la Z))\right) \to \text{Spec}(B).
	\]
	Let $\overline{\Q_p}$ be an algebraic closure of $\Q_p$.
	Let $\overline{\Z_p}$ be the integral closure of $\Z_p$ in $\overline{\Q_p}$.
	Let $\mathfrak{p}$ be the maximal ideal of $\overline{\Z_p}$.
	For every value $\mu$ $\left(\mu \in \overline{\Z_p}, \mu(1-\mu) h(\mu)\neq 0 \mod\mathfrak{p} \right)$ of $\la$, the fiber above $\mu$ is an elliptic curve with good ordinary reduction, denoted by $E_{\mu}$.
	The relative curve $E$ over $B$ with the divisor at infinity deleted is written as $\text{Spec}\left( B[x,y]/(y^2-x(x-1)(x-\la))\right)$,	where $(x,y)=\left(\frac{X}{Z},\frac{Y}{Z}\right)$.
	Then the de Rham cohomology $ H^1_{\text{dR}} \coloneqq H^1_{\text{dR}}(E/B)$ is a free $B$-module of rank $2$ and $\text{Fil}^1H^1_{\text{dR}}=\Gamma(E,\Omega_{E/B})$ is a free $B$-module of rank $1$ with basis $\omega=\frac{dx}{y}$.
	
	In~\cite[\S 6]{Dw}, Dwork defined a Frobenius structure $\varphi_{ H^1_{\text{dR}}}$ on $ H^1_{\text{dR}}$.
	Moreover, he found that there exists a unique direct summand $U$ of the $B$-module $ H^1_{\text{dR}}$ stable under $\varphi_{H^1_{\text{dR}}}$. (See~\cite{Put} and~\cite[pp. 232-233]{Fr}.)
	This $U$ is called the unit root part of $ H^1_{\text{dR}}$. Let $E_{\mathbb F_p}$ denote the reduction mod $p$ of $E$. 
	Then $H^1_{\text{dR}}$ is canonically isomorphic to the crystalline cohomology $H^1_{\text{crys}}(E_{\mathbb F_p}/B)$, and $\varphi_{H^1_{\text{dR}}}$ coincides with the Frobenius structure induced by the absolute Frobenius of $E_{\mathbb F_p}$.
	
	More precisely, we obtain $U$ as follows. 
	Let $\varphi\colon B\to B$ be the unique lifting of the absolute Frobenius satisfying $\varphi(a)=a$ $(a\in \mathbb Z_p)$ and $\varphi(\lambda)=\lambda^p$ (cf.~\S\ref{sec:basic}). 
	Then the Frobenius structure $\varphi_{H^1_{\text{dR}}}$ is realized as a $\varphi$-semilinear endomorphism of $H^1_{\text{dR}}$, which is again denoted by $\varphi_{H^1_{\text{dR}}}$ in the following.
	Let $\nabla \colon H^1_{\text{dR}} \to H^1_{\text{dR}} \otimes_B\Omega_{B}$ be the Gauss-Manin connection.
	We define the $B$-linear endomorphism $D$ of $H^1_{\text{dR}}$ by $\nabla=D\otimes d\la$.
	Then $\omega$ and $D(\omega)$ form a basis of $ H^1_{\text{dR}}$ because the Kodaira-Spencer map $\Gamma(E,\Omega_{E/B}) \subset H^1_{\text{dR}} \xrightarrow{\nabla} H^1_{\text{dR}}\otimes_B\Omega_B\to H^1_{\text{dR}}/\text{Fil}^1\otimes_B\Omega_B$ is an isomorphism. 
	(This follows from the following fact: For any field $F$ of characteristic $\neq 2$, and any $a\in F\backslash \{0,1\}$, the elliptic curve $y^2=x(x-1)(x-a-\varepsilon)$ over $F[\varepsilon]/(\varepsilon^2)$ is not constant, i.e., not isomorphic to the base change of the elliptic curve $y^2=x(x-1)(x-a)$ over $F$ under $F\hookrightarrow F[\varepsilon]/(\varepsilon^2)$.)
	By~\cite[Proposition 7.11.(ii)]{Put}, we can write $\nabla$ on $ H^1_{\text{dR}}$ explicitly as
	\begin{multline}
	\label{GMExplicitFormula}
	\nabla \left( \begin{pmatrix} \la(1-\la)\omega & -\la(1-\la)D(\omega) \end{pmatrix} \right) \\ =
	\begin{pmatrix} \la(1-\la)\omega & -\la(1-\la)D(\omega) \end{pmatrix}
	\frac{1}{\la(1-\la)} \begin{pmatrix} 1-2\la & -\frac{1}{4} \\-\la(1-\la) & 0\end{pmatrix} \otimes d\la.
	\end{multline}
	Let $C$ be a ring extension of $B$ (e.g.~$\Q_p[[\lambda]]$) which carries an extension of $\frac{d}{d\lambda}$. 
	Then the above formula implies that, for $f_1,f_2\in C$, we have $D(f_1\lambda(1-\la)\omega-f_2\lambda(1-\la)D(\omega))=0$ if and only if $f_1=\frac{df_2}{d\lambda}$ and $f_2$ satisfies the classical hypergeometric equation with parameters $\left(\frac{1}{2},\frac{1}{2} ; 1 \right)$ (\cite[Proposition 7.11.(iii)]{Put}):
	\begin{equation}
	\label{hyp-geom-eq}
	\la(1-\la) \dtwo f_2 +(1-2\la) \diff f_2 -\frac{1}{4} f_2=0.
	\end{equation}
	This differential equation has the well-known solution $f(\la)\coloneqq F(\frac12,\frac12;1;\la)=\sum_{n=0}^{\infty} \left( \prod^{n-1}_{i=0} \left( \frac{i+\frac{1}{2}}{i+1}\right)^2 \right) \la^n$$\in \mathbb Q_p[\la]]$, which converges on the open unit disk.
	
	Let $\Z_p\left\langle\la,\frac{1}{h(\la)}\right\rangle$ be the $p$-adic completion of
	$\Z_p \left[\la,\frac{1}{h(\la)} \right]$. 
	In~\cite[\S1-\S4]{Dw}, Dwork showed $\frac{\varphi(f)}{f}\in\Z_p\left\langle\la,\frac{1}{h(\la)}\right\rangle^{\times}$, from which he derived
	$\frac{1}{f}\frac{df}{d\lambda}\in \Z_p\left\langle\la,\frac{1}{h(\la)}\right\rangle$. The latter implies that
	$\overline{e}_1'=\omega$ and $\overline{e}'_2=\la(1-\la)\left(\frac{1}{f}\frac{df}{d\la} \omega-D(\omega)\right)$ form a basis of $H^1_{\text{dR}}$. 
	The unit root part $U$ is given by $U=B\overline{e}'_2$, and he further showed a formula~(\cite[(6.28)]{Dw}):
	\begin{equation}
	\label{unitrootphi}
	\varphi_{H^1_{\text{dR}}}({\overline{e}'_2})= \varepsilon\frac{f}{\varphi(f)} {\overline{e}'_2}, \qquad \varepsilon=(-1)^{\tfrac{p-1}{2} }.
	\end{equation}
	By the last claim in the previous paragraph, we see $D(f{\overline{e}'_2})=0$, which implies	$D({\overline{e}'_2})=-\frac{1}{f}\frac{df}{d\lambda}{\overline{e}'_2}$, and therefore $U$ is stable under $\nabla$.
	
	By using the explicit formula of $\nabla$ on $H^1_{\text{dR}}$ above, we see $\nabla({\overline{e}'_1}\wedge {\overline{e}'_2})=0$ and therefore $(\wedge^2H^1_{\text{dR}})^{\nabla=0}=\Z_p \left({\overline{e}'_1}\wedge {\overline{e}'_2}\right)$.
	Since the Frobenius endomorphism of $\wedge^2H^1_{\text{dR}}(E_{\mu})=H^2_{\text{dR}}(E_\mu)$ for $\mu=[a]\in \Z_p$ $(a\in \mathbb F_p\backslash\{0,1\})$ is the multiplication by $p$,
	this implies $\varphi ({\overline{e}'_1}\wedge {\overline{e}'_2})=p {\overline{e}'_1}\wedge {\overline{e}'_2}$. 
	As $\varphi_{H^1_{\text{dR}}}(\text{Fil}^1H^1_{\text{dR}}) \subset pH^1_{\text{dR}}$, we have
	\begin{equation}
	\label{dRcohphi}\varphi_{H^1_{\text{dR}}}({\overline{e}'_1})= p\varepsilon \frac{\varphi(f)}{f} {\overline{e}'_1}+ pb {\overline{e}'_2}
	\end{equation}
	for some $b\in B$.
	
	Let $B'$ be $\Z_p \left\langle \la, \frac{1}{h(\la)} \right\rangle$ equipped with the Frobenius $\varphi$ defined in the same way as $B$.
	Let $U_{B'}$ be the $B'$-submodule $B' \overline{e}_2'$ of $U$. 
	Then, by the explicit description of $\varphi_{H^1_{\text{dR}}}\vert_U$ and $\nabla\vert_U$ recalled above, we see that they induce a $\varphi$-semilinear endomorphism $\varphi_{U_{B'}}$ of $U_{B'}$, which satisfies $B'\cdot\varphi_{U_{B'}}(U_{B'})=U_{B'}$, and a connection $\nabla\colon U_{B'}\to U_{B'}\otimes_{B'}\Omega_{B'}$, where $\Omega_{B'}=B'd\lambda$. 
	We also see that $(H^1_{\text{dR}},\text{Fil}^{\bullet}, \nabla)$ has a ``$B'$-structure'' given by relative log de Rham cohomology as follows. 
	Let $T$ be $\text{Spec}(B')$ equipped with the log structure defined by the divisor $\lambda(1-\lambda)=0$. Then, by replacing $B$ with $B'$ in the definition of $E$, we obtain a log smooth extension $E'/T$ of $E/\text{Spec}(B)$. 
	Its relative log de Rham cohomology $H_{B'} \coloneqq H^1_{\text{dR}}(E'/T)$ is a free $B'$-module of rank $2$ equipped with the logarithmic Gauss-Manin connection $\nabla\colon H_{B'}\to H_{B'}\otimes_{B'}\Omega_{B',\log}$, where $\Omega_{B',\log}=B'\frac{d\lambda}{\lambda(1-\lambda)}$. 
	The pull-back by $\text{Spec}(B)\to T$ induces a $B$-linear isomorphism $H_{B'}\otimes_{B'}B\xrightarrow{\cong} H^1_{\text{dR}}$ compatible with $\nabla$, and $\text{Fil}^{\bullet}$.
	Moreover, we have $\text{Fil}^1H_{B'}=\Gamma(E',\Omega_{E'/T})=B'\omega$, and the Kodaira-Spencer map $\text{Fil}^1H_{B'}\to H_{B'}/\text{Fil}^1\otimes_{B'}\Omega_{B',\log}$ is an isomorphism. 
	Since $\nabla(\omega)=\lambda(1-\lambda) D(\omega)\otimes \frac{d\lambda}{\lambda(1-\lambda)}$, this means that both $(\omega,\la(1-\la)D(\omega))$ and $({\overline{e}'_1}, {\overline{e}'_2})$ are bases of $H_{B'}$. This implies that $U_{B'}$ is a direct factor of $H_{B'}$.
	Since $\varphi$ of $B'$ does not preserve the divisor $\la(1-\la)=0$, the comparison isomorphism with the log crystalline cohomology of $E'_{\mathbb F_p}/T$ does not give a $\varphi$-semilinear endomorphism of $H_{B'}$.
	
	Let $B''=\Z_p\left\langle \lambda, \frac{1}{(1-\la)h(\la)}\right\rangle$ be the $p$-adic completion of $\Z_p \left[\lambda,\frac{1}{(1-\la)h(\la)} \right]$, and define the Frobenius $\varphi$ of $B''$ in the same way as that of $B$. Then, since $\varphi$ of $B''$ preserves the divisor $\la=0$, the comparison isomorphism with the log crystalline cohomology induces a $\varphi$-semilinear endomorphism $\varphi_{H_{B''}}$ of $H_{B''}\coloneqq H_{B'}\otimes_{B'}B''$, which is compatible with $\varphi_{H^1_{\text{dR}}}$. As $\varphi_{H_{B''}}(\Fil^1H_{B''})\subset pH_{B''}$, we obtain $b\in B''$.
	
	In \S\ref{background}, we introduce a category $\text{MIC}_{[0,a]}(A,\varphi,\Fil^{\bullet})$ for a non-negative integer $a$ whose object is a free $A$-module of finite type $M$ with a decreasing filtration, a Frobenius endomorphism and a connection satisfying certain conditions.
	By the above construction, we obtain an object of $\text{MIC}_{[0,a]}(A,\varphi,\Fil^{\bullet})$ in each of the cases $A=B'$, $a=0$, $M=U_{B'}$ and $A=\Z_p[[\la]]$, $a=1$, $M=H_{B''} \otimes_{B''}\Z_p[[\la]]$.
	In this paper, we are interested in $q$-analogues of these objects, namely $q$-deformations involving a formal variable $q$ such that the specialization to $q=1$ recovers the original objects.
	P. Scholze made a conjecture that there exists a canonical $q$-deformation of de Rham cohomology, which is sometimes called $q$-de Rham cohomology or Aomoto-Jackson cohomology.
	(See~\cite{Sch}.)
	Especially in~\cite[\S 8]{Sch}, he asked whether there is a relation between the $q$-differential equation given by the conjectured $q$-de Rham cohomology of the Legendre family and the $q$-hypergeometric equation. Our result, which is explained below, may be regarded as positive evidences to his question.
	
	Let $S'=\Z_p[[q-1]] \left\langle \la, \frac{1}{h(\la)} \right\rangle$ be the $(p,q-1)$-adic completion of $\Z_p[[q-1]] \left[ \la, \frac{1}{h(\la)} \right]$.
	Let $R'$ be $\Z_p[[q-1]] [[\la]]$ or $S'$.
	Put $A\coloneqq R'/(q-1)$, then we can identify $A$ with $\Z_p[[\la]]$ (resp. $B'$) when $R'=\Z_p[[q-1]] [[\la]]$ (resp. $S'$).
	
	In \S\ref{sec:basic}, we give $R'$ a Frobenius structure and a $\Gamma$-action $\rho$, and then recall the definition of $q$-connections on $R'$-modules and the relation between $\rho$-semilinear $\Gamma$-actions and $q$-connections. 
	In \S\ref{background}, we introduce the category $\text{MF}^{\q{p},q-1}_{[0,a]}(R',\varphi,\Gamma)$ for a non-negative integer $a$ whose object is a free $R'$-module of finite type $M$ with a decreasing filtration, a $\varphi$-semilinear endomorphism and a $\rho$-semilinear action of $\Gamma$ satisfying certain conditions.
	Then the canonical surjection $R' \to A$
	induces a functor
	\[
	\text{MF}_{[0,a]}^{\q{p},q-1}(R',\varphi,\Gamma)
	\xrightarrow{\bmod q-1}
	\text{MIC}_{[0,a]}(A,\varphi,\Fil^{\bullet}).
	\]
	By using the equivalence of categories in~\cite[\S7]{Tsu}, we further construct a canonical right inverse of this functor when $a=0, 1$ as
	\[
	\text{MIC}_{[0,a]}(A,\varphi,\Fil^{\bullet})\xrightarrow{-\otimes_{A}R'/(q-1)^{a+1}}
	\text{MF}_{[0,a]}^{\q{p},q-1}(R'/(q-1)^{a+1},\varphi,\Gamma)
	\xleftarrow[-\otimes_{R'}R'/(q-1)^{a+1}]{\sim}
	\text{MF}_{[0,a]}^{\q{p},q-1}(R',\varphi,\Gamma)
	\]
	This applies to the objects $U_{B'}$ (for $A=B'$ and $a=0)$ and $H_{B''}\otimes_{B''}\Z_p[[\la]]$ (for $A=\Z_p[[\la]]$ and $a=1)$ mentioned above.
	
	One can ask whether there is a relationship between the canonical lifts ($q$-deformations) of $U_{B'}$ and $H_{B''}\otimes_{B''}\Z_p[[\la]]$ constructed as above and the $q$-hypergeometric differential equation~\cite{BHS} with parameters $\left(q^{\frac{1}{2}},q^{\frac{1}{2}} ; q \right)$ defined by
	\begin{equation}
	\label{q-hyp-geom-eq} 
	q\lambda(1-q\lambda) d^2_q{f} +(1-(1+\q{2}-2\q{\tfrac{1}{2}}) \la) d_q{f} -\q{\tfrac{1}{2}}^2 f=0,
	\end{equation}
	which is a $q$-analogue of the differential equation~\eqref{hyp-geom-eq}.
	We give a positive answer to this question as follows. By ``$q$-deforming'' the relations of $\nabla$ and $\varphi$ on $U_{B'}$ (resp.~$H_{B''}\otimes_{B''}\Z_p[[\la]]$) to the hypergeometric equation~\eqref{hyp-geom-eq} recalled above, we construct a Frobenius endomorphism and a $q$-connection on a free $S'$-module of rank $1$
	(resp.~a free $\Z_p[[q-1]][[\la]]$-module of rank $2$) associated with the $q$-hypergeometric differential equation~\eqref{q-hyp-geom-eq}, and show that it gives the desired canonical $q$-deformation of $U_{B'}$ (resp.~$H_{B''}\otimes_{B''}\Z_p[[\lambda]]$).
	
	In \S\ref{Main}, we state the main theorems.
	In \S\ref{qsolu_sec}, we give explicit solutions of the $q$-hypergeometric equation~\eqref{q-hyp-geom-eq}, one of which involves a $q$-analogue of the logarithmic functions $\log(\la)$ and $\log(1-\la)$, and compute a $q$-analogue of Wronskian of the explicit solutions.
	In \S\ref{DFandConn}, we construct a $q$-deformation of $\nabla$ on $H_{B'}$ which is related to the $q$-differential equation~\eqref{q-hyp-geom-eq} similarly to the relation between $\nabla$ on $H_{B'}$ and the differential equation~\eqref{hyp-geom-eq} recalled above.
	In \S\ref{Dwsec}, we show a $(p,q-1)$-adic formal congruence, which is a $q$-analogue of Dwork's results in~\cite[\S1--\S3]{Dw}. 
	In \S\ref{proofsec} and \S\ref{sec:answer}, by applying the formal congruence to the explicit solutions constructed in \S\ref{qsolu_sec}, we give Frobenius structures to the $q$-deformations of the connections on $U_{B'}$ and $M_{B''}\otimes_{B''}\Z_p[[\lambda]]$ (constructed in \S\ref{DFandConn}), and show that they give the desired canonical $q$-deformations. In \S\ref{FurtherTopic}, we further show that the $q$-deformation of $\varphi$ and $\nabla$ on $U_{B'}$ admits an ``arithmetic $\Gamma$-structure''.
	
	\begin{rem} 
		The $B''$-module $H_{B''}$ with $\nabla$, $\varphi_{B''}$, and the filtration is an object of $\text{MIC}_{[0,1]}(B'',\varphi,\Fil^{\bullet})$, and the functor~\eqref{intro:AR} in \S\ref{background} for $a=1$ and $R'=S''\coloneqq\Z_p[[q-1]]\left\langle\la,\frac{1}{(1-\la)h(\la)}\right\rangle$ gives a canonical $q$-deformation of $H_{B''}$ in $\text{MF}_{[0,1]}^{\q{p},q-1}(S'',\varphi,\Gamma)$. Therefore one may ask whether its $q$-connection is related to the $q$-differential equation~\eqref{q-hyp-geom-eq} similarly to the relation between $\nabla$ on $H^1_{\text{dR}}$ and the differential equation~\eqref{hyp-geom-eq}. We can also apply the same construction to the log smooth extension of the Legendre family over the base $\Z_p\left\langle \la,\frac{1}{1-\la}\right\rangle$ (without removing the supersingular locus), and ask the same question. It is natural to expect that this canonical $q$-deformation coincides with the conjectured (log) $q$-de Rham cohomology of the family. (We can compare the two in the category $\text{MF}_{[0,1]}^{\q{p},q-1} \left(\Z_p[q-1]/(q-1)^2\left\langle\lambda,\frac{1}{1-\la}\right\rangle,\varphi,\Gamma\right)$, where our canonical $q$-deformation is reduced to the scalar extension by $\Z_p\left\langle\lambda,\frac{1}{1-\la}\right\rangle\to \Z_p[q-1]/(q-1)^2\left\langle\lambda,\frac{1}{1-\la}\right\rangle$.) Thus our question is connected with the question by Scholze mentioned above.
	\end{rem}
	
	\textbf{Notation.}
	We fix some notation used throughout this paper.
	Let $p$ be an odd prime number.
	Let $v_p$ be the $p$-adic valuation of $\overline{\Q_p}$ normalized by $v_p(p)=1$.
	Let $q$ be a formal variable.
	Let $R=\Z_p[[q-1]]$.
	Let $Q$ be the quotient field of $R$.
	Let $B=\Z_p \left\langle \la, \frac{1}{\la (1-\la)h(\la)} \right\rangle$.
	Let $B'=\Z_p \left\langle \la, \frac{1}{h(\la)} \right\rangle$.
	We equip $B$ and $B'$ with the $p$-adic topology.
	Let $S'=R \left\langle \la, \frac{1}{h(\la)} \right\rangle$.
	We equip $\Rla$ and $S'$ with the $(p,q-1)$-adic topology.
	For $a \in \Q \cap \Z_p$, let $\q{a}$ be the $q$-number (the $q$-analogue of the rational number $a$) defined by
	\[
	\q{a} \coloneqq \frac{q^a -1}{q-1}=\sum_{i=1}^{\infty} \binom{a}{i} (q-1)^{i-1}.
	\]
	If $a$ is a positive integer, $\q{a}$ is equal to $1+q+q^2+\cdots +q^{a-1}$.
	
	\textbf{Acknowledgments.}
	This paper is based on the author's Master's thesis. The author is deeply grateful to Professor Takeshi Tsuji. He is the supervisor of the author.
	
	\section{\texorpdfstring{$q$}{q}-connection}
	\label{sec:basic}
	We define $\Rcompla$ to be the completion of the polynomial ring $R[\la]$ with respect to the $(p,q-1)$-adic topology, namely,
	\[
	R\left\langle \la \right\rangle \coloneqq \varprojlim_n R[\la] / {(p,q-1)}^n R[\la].
	\]
	For $g(\la) \in R[\la] \setminus (p,q-1) R[\la]$, we define $R\left\langle \la, \frac{1}{g(\la)} \right\rangle$ to be the completion of the ring $R \left[\la, \frac{1}{g(\la)}\right]$ with respect to the $(p,q-1)$-adic topology.
	In this section, we construct a $q$-analogue of the differential operator $\diff$ on $R'=\Rla,\Rcompla$ and $R\left\langle \la, \frac{1}{g(\la)} \right\rangle$ ($g(\la) \in R[\la]\setminus (p,q-1) R[\la]$).
	
	\begin{defi}
		We define the Frobenius endomorphism $\varphi$ of $\Rla,\Rcompla$ and $R\left\langle \la, \frac{1}{g(\la)} \right\rangle$ ($g(\la) \in R[\la]\setminus (p,q-1) R[\la]$) as follows:
		First, we define the endomorphism $\varphi$ of $R[\la]$ satisfying $\varphi(x) \equiv x^p \mod p$ by $\varphi(a)=a$ ($a\in \Z_p$), $\varphi(q)=q^p$, and $\varphi(\la)=\la^p$.
		Since it maps the ideals $(\la)$ and $(p,q-1)$ of $R[\la]$ into themselves,
		we can define $\varphi$ of $\Rla$ and $\Rcompla$ by taking its $\lambda$- and $(p,q-1)$-adic completions, respectively.
		The endomorphism $\varphi$ of $R[\la]$ induces a homomorphism $\varphi \colon R \left[\la,\frac{1}{g(\la)}\right] \to R\left[\la,\frac{1}{\varphi(g(\la))}\right]$,
		and its $(p,q-1)$-adic completion gives the Frobenius endomorphism $\varphi$ on $R \left\langle \la,\frac{1}{g(\la)}\right\rangle$ by Lemma~\ref{lem:phi} below.
	\end{defi}
	
	\begin{lemm}
		\label{lem:phi}
		For $g(\la), h(\la) \in R[\la]$ satisfying
		$g(\la) \equiv h(\la)^n \not \equiv 0 \mod(p,q-1)R[\la]$
		for some integer $n>0$, we have
		\[
		R\left\langle \la, \frac{1}{g(\la)} \right\rangle
		= R\left\langle \la, \frac{1}{h(\la)} \right\rangle.
		\]
	\end{lemm}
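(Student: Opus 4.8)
The plan is to prove the asserted equality by showing that $g(\la)$ becomes a unit in $R\langle\la,\tfrac1{h}\rangle$ and, symmetrically, that $h(\la)$ becomes a unit in $R\langle\la,\tfrac1{g}\rangle$; then the universal property of localization followed by $(p,q-1)$-adic completion produces continuous $R\langle\la\rangle$-algebra homomorphisms in both directions, and it remains to check these are mutually inverse. First note that the hypothesis $g\equiv h^n\not\equiv 0\bmod(p,q-1)R[\la]$ also forces $h\not\equiv 0\bmod(p,q-1)R[\la]$ (otherwise $h^n\equiv0$), so both rings $R\langle\la,\tfrac1{g}\rangle$ and $R\langle\la,\tfrac1{h}\rangle$ are defined in the sense of the preceding paragraph.

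The sole ring-theoretic input is the standard fact that in a ring $S$ complete for the $I$-adic topology, an element whose image in $S/IS$ is a unit is itself a unit in $S$ (invert $1-x$ for $x\in IS$ by the convergent series $\sum_{k\ge0}x^k$). Apply this with $I=(p,q-1)$. Since $R\langle\la,\tfrac1{g}\rangle$ is the $(p,q-1)$-adic completion of the Noetherian ring $R[\la,\tfrac1{g}]$, one has $R\langle\la,\tfrac1{g}\rangle/(p,q-1)\cong\F_p[\la,\tfrac1{\bar g}]$ with $\bar g$ the reduction of $g$; and because $\bar g=\bar h^{\,n}$ one has $\F_p[\la,\tfrac1{\bar g}]=\F_p[\la,\tfrac1{\bar h}]$ inside $\F_p(\la)$, a ring in which both $\bar g$ and $\bar h$ are units. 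The identical computation gives $R\langle\la,\tfrac1{h}\rangle/(p,q-1)\cong\F_p[\la,\tfrac1{\bar h}]=\F_p[\la,\tfrac1{\bar g}]$. Hence $g$ is a unit mod $(p,q-1)$ in $R\langle\la,\tfrac1{h}\rangle$ and $h$ is a unit mod $(p,q-1)$ in $R\langle\la,\tfrac1{g}\rangle$, so by the completeness fact $g$ and $h$ are genuine units in the respective rings.

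Using the invertibility of $g$ just established, the map $R[\la]\hookrightarrow R\langle\la,\tfrac1{h}\rangle$ extends to an $R$-algebra homomorphism $R[\la,\tfrac1{g}]\to R\langle\la,\tfrac1{h}\rangle$ sending $\tfrac1g\mapsto g^{-1}$; it sends $p\mapsto p$ and $q-1\mapsto q-1$, hence is continuous for the $(p,q-1)$-adic topologies and induces a continuous $R\langle\la\rangle$-algebra homomorphism $\Phi\colon R\langle\la,\tfrac1{g}\rangle\to R\langle\la,\tfrac1{h}\rangle$. Symmetrically one gets $\Psi\colon R\langle\la,\tfrac1{h}\rangle\to R\langle\la,\tfrac1{g}\rangle$. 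The composite $\Psi\circ\Phi$ is a continuous endomorphism of $R\langle\la,\tfrac1{g}\rangle$ that fixes $\la$ and sends $\tfrac1g\mapsto g^{-1}\mapsto\tfrac1g$, hence restricts to the identity on the dense subring $R[\la,\tfrac1{g}]$; since $R\langle\la,\tfrac1{g}\rangle$ is $(p,q-1)$-adically separated (it is Noetherian, being a completion of a Noetherian ring along an ideal, and that ideal lies in its Jacobson radical by completeness, so Krull's intersection theorem applies), $\Psi\circ\Phi=\mathrm{id}$; likewise $\Phi\circ\Psi=\mathrm{id}$. Thus $\Phi,\Psi$ are mutually inverse and the two rings coincide.

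The statement is formal, so I anticipate no real obstacle; the only care needed is the completion bookkeeping — checking that the localize-then-complete maps are continuous and that the completed rings are separated, so that continuous maps agreeing on a dense subring are equal. Everything else reduces to the single observation that a unit modulo $(p,q-1)$ in a $(p,q-1)$-adically complete ring is a unit, together with the trivial identity $\F_p[\la,\tfrac1{\bar g}]=\F_p[\la,\tfrac1{\bar h}]$ coming from $\bar g=\bar h^{\,n}$.
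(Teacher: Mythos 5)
Your proposal is correct and takes essentially the same route as the paper: show that $g$ is a unit modulo $(p,q-1)$ in $R\langle\la,\tfrac1{h}\rangle$ (hence a unit by completeness), and symmetrically, then extend the natural maps via the universal property of localization and completion and check the composites are the identity. You supply more detail than the paper does on the density/separatedness bookkeeping for the last step, which the paper leaves implicit, but the argument is the same.
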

	
	\begin{proof}
		The congruence $g(\lambda)\equiv h(\lambda)^n \mod (p,q-1)$ implies that the image of $g(\la)$ in the quotient	$\left. R\left\langle \la, \frac{1}{h(\la)} \right\rangle \middle/ (p,q-1) R\left\langle \la, \frac{1}{h(\la)} \right\rangle \right.$ is a unit.
		Since $R\left\langle \la, \frac{1}{h(\la)} \right\rangle$ is $(p,q-1)$-adically complete, $g(\la)$ is a unit of $R\left\langle \la, \frac{1}{h(\la)} \right\rangle $. By the same argument, we see that $h(\lambda)$ is a unit of $R\left\langle \la, \frac{1}{g(\la)}\right\rangle$. Thus the
		natural homomorphisms $R[\lambda]\to R\left\langle \la, \frac{1}{h(\la)} \right\rangle, R\left\langle \la, \frac{1}{g(\la)}\right\rangle$ extend to
		homomorphisms
		\[
		f\colon R\left\langle \la, \frac{1}{g(\la)} \right\rangle\to R\left\langle \la, \frac{1}{h(\la)} \right\rangle, \quad
		g\colon R\left\langle \la, \frac{1}{h(\la)} \right\rangle \to R\left\langle \la, \frac{1}{g(\la)} \right\rangle,
		\]
		which satisfy $g\circ f=\text{id}$ and $f\circ g=\text{id}$.
	\end{proof}
	
	\begin{defi}
		Let $\Gamma$ be a group isomorphic to $\Z$, and let $\gamma$ be a generator of $\Gamma$.
		We define the action of $\Gamma$ on $\Rla,\Rcompla$ and $R\left\langle \la, \frac{1}{g(\la)} \right\rangle$ ($g(\la) \in R[\la]\setminus (p,q-1) R[\la]$) as follows.
		First, we define the action of $\Gamma$ on $R[\la]$ by $\gamma(\la)=q\la$ and $\gamma(a)=a$ ($a\in R$).
		Since this action of $\Gamma$ preserves the ideals $(\lambda)$ and $(p,q-1)$ of $R[\la]$, it induces an action of $\Gamma$ on $\Rla$ and $\Rcompla$.
		The action of $\gamma$ on $R[\lambda]$ extends to an isomorphism $\gamma \colon R \left[\la,\frac{1}{g(\la)}\right] \to R\left[\la,\frac{1}{\gamma(g(\la))}\right]$,
		whose $(p,q-1)$-adic completion gives an action of $\gamma$ on $R\left\langle \la, \frac{1}{g(\la)} \right\rangle$ by Lemma~\ref{lem:phi}.
	\end{defi}
	
	Let $R'$ be one of the $R$-algebras $\Rla$, $\Rcompla$ and $R\left\langle \la, \frac{1}{g(\la)} \right\rangle$.
	Since $R'$ is noetherian, $(q-1)\la R'$ is a closed ideal of $R'$ with respect to the $\lambda$-adic topology for $R'=\Rla$, and the $(p,q-1)$-adic topology for $R'=\Rcompla, R\left\langle \la, \frac{1}{g(\la)} \right\rangle$.
	Hence $(\gamma-1)(\lambda^n)=(q-1)[n]_q\lambda^n\in (q-1)\lambda R[\lambda]$ and $(\gamma-1)(g(\lambda)^{-n})=-g(\lambda)^{-n}\gamma(g(\lambda))^{-n} (\gamma-1)(g(\lambda)^n)\in (q-1)\lambda R\left\langle \la, \frac{1}{g(\la)} \right\rangle$ imply the inclusion $(\gamma-1)(R')\subset (q-1)\lambda R'$.
	
	\begin{defi}
		\label{def:q-diff_operator}
		We define the $q$-differential operator $d_q \colon R' \to R'$ by $d_q=\frac{\gamma-1}{(q-1)\la}$.
	\end{defi}
	This is a $q$-analogue of the differential operator $\diff$.
	Clearly we have $\gamma=1+(q-1)\la d_q$.
	
	\begin{prop}
		\label{q-Leibniz}
		We have a $q$-analogue of Leibniz rule:
		\[
		d_q(xy)=d_q(x)\gamma(y)+xd_q(y), \quad x,y \in R'.
		\]
	\end{prop}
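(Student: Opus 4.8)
The plan is to deduce the $q$-Leibniz rule from the trivial algebraic identity $\gamma(x)\gamma(y)-xy=(\gamma(x)-x)\gamma(y)+x(\gamma(y)-y)$ by clearing the factor $(q-1)\la$. I would first recall two facts established above: by the discussion preceding Definition~\ref{def:q-diff_operator} one has $\gamma=1+(q-1)\la\,d_q$ on $R'$, i.e.\ $(q-1)\la\,d_q(z)=(\gamma-1)(z)$ for every $z\in R'$; and $\gamma$ is an endomorphism of the ring $R'$ (for $R'=R\langle\la,\tfrac{1}{g(\la)}\rangle$ this is precisely the content of Lemma~\ref{lem:phi} applied as in the definition of the $\Gamma$-action, since $\gamma(g(\la))=g(q\la)\equiv g(\la)\bmod(p,q-1)$).

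I would also record that multiplication by $(q-1)\la$ is injective on $R'$: each of $R[[\la]]$, $R\langle\la\rangle$ and $R\langle\la,\tfrac1{g(\la)}\rangle$ is an adic completion of the noetherian integral domain $R[\la]$ (resp.\ $R[\la,\tfrac1{g(\la)}]$), hence flat over it, so the non-zero-divisor $(q-1)\la$ of that domain stays a non-zero-divisor in $R'$. This is the same fact that makes $d_q(z)$ single-valued in Definition~\ref{def:q-diff_operator}.

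Now, given $x,y\in R'$, I would multiply the claimed identity $d_q(xy)=d_q(x)\gamma(y)+x\,d_q(y)$ by $(q-1)\la$. The left side becomes $(\gamma-1)(xy)=\gamma(x)\gamma(y)-xy$ since $\gamma$ is multiplicative, and the right side becomes $\bigl((q-1)\la\,d_q(x)\bigr)\gamma(y)+x\bigl((q-1)\la\,d_q(y)\bigr)=(\gamma(x)-x)\gamma(y)+x(\gamma(y)-y)$, which expands to $\gamma(x)\gamma(y)-xy$ as well. Hence $(q-1)\la$ annihilates the difference of the two sides, and cancelling this non-zero-divisor yields the rule. There is no genuine obstacle; the only point needing a word of care is the injectivity of multiplication by $(q-1)\la$ (equivalently, well-definedness of $d_q$). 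If one prefers to avoid even that, continuity of $d_q$, $\gamma$ and multiplication together with density of $R[\la]$ (resp.\ $R[\la,\tfrac1{g(\la)}]$) in $R'$ reduces the identity to that subring, where it follows by $R$-bilinearity from $d_q(\la^n)=[n]_q\la^{n-1}$ and $\gamma(\la^n)=q^n\la^n$; but the cancellation argument above is shorter and cleaner.
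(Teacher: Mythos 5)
Your proof is correct and uses exactly the same key identity as the paper, namely $(\gamma-1)(xy)=(\gamma-1)(x)\gamma(y)+x(\gamma-1)(y)$, from which the paper states the rule "follows." You spell out more carefully the cancellation of the non-zero-divisor $(q-1)\la$ (which the paper leaves implicit, being already built into the well-definedness of $d_q$), but the underlying argument is identical.
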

	
	\begin{proof}
		The formula follows from $(\gamma-1)(xy)= (\gamma-1)(x) \gamma(y) +x(\gamma-1)(y)$.
	\end{proof}
	
	\begin{defi}
		We define the $q$-differential module $q\Omega_{R'/R}$ to be the free $R'$-module $R' d\log\la$ and the $q$-derivation $\delta_q\colon R' \to q\Omega_{R'/R}$ by $\delta_q(x)=d_q(x)\cdot d\la$, where $d\la=\la d\log\la$.
	\end{defi}
	Let $A=R'/(q-1)R'$. Then $q\Omega_{R'/R} \bmod q-1$ is naturally identified with the differential module $\Omega_{A/\Z_p,\log}$ with log poles along the divisor $\la=0$.
	Since $\delta_q$ is $R$-linear, we can define $\delta \colon A \to \Omega_{A/\Z_p,\log}$ as $\delta_q \bmod q-1$. We have $\delta(x)=\diff x \cdot d\la$,
	i.e., $\delta$ is the universal continuous $R$-linear derivation.
	
	\begin{defi}
		Let $M$ be an $R'$-module.
		An $R$-linear map $\nabla_q \colon M \to M \otimes_{R'} q\Omega_{R'/R}$ is called a $q$-connection on $M$ if it satisfies
		\[
		\nabla_q(am)=m \otimes \delta_q(a) +\gamma(a)\nabla_q(m), \quad a \in R', m \in M.
		\]
	\end{defi}
	Let $(M,\nabla_q)$ be an $R'$-module with a $q$-connection.
	Let $A=R'/(q-1)R'$, $N=M/(q-1)M$ and
	let $\nabla \colon N \to N \otimes_A \Omega_{A/\Z_p,\log} $ be $\nabla_q \bmod q=1$. Then $\nabla$ satisfies
	\[
	\nabla(am)=m \otimes \delta(a) +a\nabla(m)
	\]
	for all $a \in A, m \in N$, i.e., $\nabla$ is a connection on $N$.
	
	\begin{defi}
		\label{def:q-connection}
		For	an $R'$-module $M$ with a $q$-connection $\nabla_q$, we define the $R$-linear endomorphisms $D^{\log}_q$ and $\gamma_M$ of $M$ by $\nabla_q =D^{\log}_q\otimes d\log\la$ and $\gamma_M = 1+(q-1)D_q^{\log}$. We define $D_q$ to be $\la^{-1}D^{\log}_q$ if $\la$ is invertible in $R'$.
	\end{defi}
	We see that $\gamma_M$ is $\gamma$-semilinear as follows:
	For all $r \in R' , m \in M $, we have
	\begin{align*}
	\gamma_M(rm) & =rm+(q-1) D^{\log}_q(rm) \\
	& =rm+(q-1)(\la d_q(r)m+\gamma(r) D_q^{\log}(m)) \\
	& =rm+(\gamma-1)(r)m+(q-1) \gamma(r) D_q^{\log}(m) \\
	& =\gamma(r)\gamma_M(m).
	\end{align*}
	The endomorphism $\gamma_M$ is bijective and defines a $\rho$-semilinear action of $\Gamma$ on $M$ continuous with respect to the $(p,q-1)$-adic topology.
	
	\begin{rem}
		\label{rem:q-connection} 
		For any $\ell(\lambda)\in R'\backslash\{0\}$, we can define a $q$-connection $\nabla_q\colon M\to M\otimes_{R'}\frac{1}{\ell(\lambda)}q\Omega_{R'/R}$ in the same way as in Definition~\ref{def:q-connection}. However we cannot construct the $\Gamma$-action associated to $\nabla_q$ in general unless $\ell(\lambda)$ is invertible in $R'$.
	\end{rem}
	
	We give some properties necessary for the $q$-analogue calculation in the following sections.
	
	\begin{prop}
		\label{prop:qanalogue}
		(i) $\q{a+b}=\q{a}+q^a \q{b}$; \quad
		(ii) $\q{ap}=\q{p} \varphi \left(\q{a} \right)$; \label{lem:qap} \quad
		(iii) $\q{p^n} \in (p,q-1)^n R$ \label{lem:qp^n};\\
		(iv) $\gamma \circ \varphi= \varphi \circ \gamma$ on $R'$; \label{lem:circ} \quad
		(v) $d_q \varphi=\q{p}\la^{p-1} \varphi d_q $ on $R'$. \label{lem:dqvarphi}
	\end{prop}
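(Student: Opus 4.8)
The plan is to obtain (i) and (ii) by direct manipulation of the defining formula $\q{a}=(q^a-1)/(q-1)$, to bootstrap (iii) from (ii) by induction on $n$, to prove (iv) by comparing the two composites on a dense set of generators, and to deduce (v) from (iv) by clearing the denominator $(q-1)\la$.

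For (i) I would write $q^{a+b}-1=q^a(q^b-1)+(q^a-1)$ and divide by $q-1$; since $\q{a+b},\q{a},\q{b}$ all lie in $R$, this is a genuine identity in $R$. For (ii), using $\varphi(q)=q^p$ and multiplicativity of $\varphi$ one has $\varphi(q^a)=(q^p)^a=(1+(q^p-1))^a=q^{ap}$, hence $\varphi(\q{a})=(\varphi(q^a)-1)/(q^p-1)=(q^{ap}-1)/(q^p-1)$; multiplying by $\q{p}=(q^p-1)/(q-1)$ gives $(q^{ap}-1)/(q-1)=\q{ap}$. (One could instead compare the binomial-series expansions $\sum_i\binom{a}{i}(q-1)^{i-1}$, but the fraction computation is cleaner, and every quantity involved already lies in $R$.) For (iii) I argue by induction: the base case is $\q{p}=1+q+\dots+q^{p-1}\equiv p\pmod{q-1}$, so $\q{p}\in(p,q-1)R$; for the inductive step, (ii) with $a=p^{n}$ gives $\q{p^{n+1}}=\q{p}\,\varphi(\q{p^{n}})$, and since $\varphi$ maps the ideal $(p,q-1)R$ into itself (indeed $\varphi(p)=p$ and $\varphi(q-1)=q^{p}-1=(q-1)\q{p}$), the hypothesis $\q{p^{n}}\in(p,q-1)^{n}R$ yields $\q{p^{n+1}}\in(p,q-1)^{n+1}R$. (Equivalently, iterating (ii) gives $\q{p^{n}}=\prod_{i=0}^{n-1}\varphi^{i}(\q{p})$ with every factor congruent to $p$ modulo $q-1$, hence in $(p,q-1)R$.)

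For (iv), both $\gamma\circ\varphi$ and $\varphi\circ\gamma$ are continuous ring endomorphisms of $R'$; in the case $R'=R\langle\la,1/g(\la)\rangle$ I first invoke Lemma~\ref{lem:phi} to regard $\gamma$ and $\varphi$ as endomorphisms of the single ring $R'$ (note $\gamma(g(\la))=g(q\la)\equiv g(\la)$ and $\varphi(g(\la))\equiv g(\la)^{p}$ modulo $(p,q-1)$), as is already implicit in the preceding discussion. Any such continuous endomorphism of $R'$ is determined by its restriction to $R$ together with the image of $\la$: for $\Rcompla$ and $R\langle\la,1/g\rangle$ because $R[\la]$, resp.\ $R[\la,1/g]$, is dense, and for $\Rla$ because $R[\la]$ is $\la$-adically dense. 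Now $\gamma\circ\varphi$ and $\varphi\circ\gamma$ restrict to the same endomorphism of $R$ (namely $\varphi|_{R}$, since $\gamma|_{R}=\mathrm{id}$) and both send $\la$ to $q^{p}\la^{p}$ (compute $\gamma(\la^{p})=(q\la)^{p}=q^{p}\la^{p}=\varphi(q\la)$), so they coincide. For (v), recall $\gamma-1=(q-1)\la\,d_q$ on $R'$. Multiplying the claimed identity on the left by $(q-1)\la$ turns its left-hand side into $(\gamma-1)\varphi=\varphi(\gamma-1)=\varphi((q-1)\la\,d_q)=(q^{p}-1)\la^{p}\varphi d_q$ using (iv), and its right-hand side into $(q-1)\la\cdot\q{p}\la^{p-1}\varphi d_q=(q^{p}-1)\la^{p}\varphi d_q$ using $(q-1)\q{p}=q^{p}-1$; the two agree. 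Since $(q-1)\la$ is a non-zero-divisor of $R'$ — $q-1$ is a non-zero-divisor because $R'$ is flat over $R$, and $\la$ is a non-zero-divisor of $R'/(q-1)R'$, which is $\Z_p[[\la]]$, $\Z_p\langle\la\rangle$, or a localization of the latter — we may cancel it and conclude. (Alternatively, both sides are additive and continuous and agree on the dense subring $R[\la]$ by the computation $d_q(\varphi(r\la^{n}))=\varphi(r)\q{pn}\la^{pn-1}=\q{p}\la^{p-1}\varphi(d_q(r\la^{n}))$, using $d_q(\la^{m})=\q{m}\la^{m-1}$, the $R$-linearity of $d_q$, and (ii).)

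All five computations are short; the only points that require a moment's care are the legitimacy of cancelling $(q-1)\la$ in (v) and the correct application of Lemma~\ref{lem:phi} in (iv), so that $\gamma$ and $\varphi$ act on one and the same ring. I do not expect any genuine obstacle.
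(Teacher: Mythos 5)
Your proposal is correct, and for (i), (ii), (iv), (v) it follows essentially the same route as the paper: direct manipulation of $\frac{q^a-1}{q-1}$ for (i)--(ii); continuity plus checking on the generators $a\in\Z_p$, $q$, $\la$ for (iv); and substituting $\gamma=1+(q-1)\la d_q$ into (iv) for (v). Your extra care in (v) — observing that $(q-1)\la$ is a non-zero-divisor before cancelling — and your remark about Lemma~\ref{lem:phi} in (iv) are not in the paper but are exactly the right things to say.

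The one genuine improvement is (iii). The paper's justification is the single line ``follows from $\q{p^n}\equiv p^n\bmod(q-1)R$,'' which on its own only gives $\q{p^n}\in p^nR+(q-1)R$, and for $n\geq 2$ the ideal $p^nR+(q-1)R$ strictly contains $(p,q-1)^nR$, so the stated congruence alone does not suffice. Your inductive argument — base case $\q{p}\equiv p\bmod(q-1)$ together with (ii) and the observation that $\varphi$ preserves $(p,q-1)R$ because $\varphi(q-1)=(q-1)\q{p}$ — closes that gap cleanly, and is surely what the author intended; it also has the pedagogical merit of showing how (ii) and (iii) hang together. In short: same approach throughout, but your write-up of (iii) is more complete than the paper's.
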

	
	\begin{proof}
		We can verify the equalities (i) and (ii) by simple computations.
		The claim (iii) follows from $[p^n]_q\equiv p^n \mod (q-1)R$.
		For the claim (iv), since both sides are continuous endomorphisms, it suffices to show the commutativity for $a\in \Z_p$, $q$, and $\la$, which is verified as follows:
		\[
		\gamma \circ \varphi(a)=a= \varphi \circ \gamma(a),\quad
		\gamma \circ \varphi(q)=q^p= \varphi \circ \gamma(q),\quad
		\gamma \circ \varphi(\la)=q^p \la^p= \varphi \circ \gamma(q).
		\]
		We obtain the equality (v) by substituting $\gamma=1+(q-1)\lambda d_q$ into the equality (iv).
	\end{proof}
	
	\section{Background}
	\label{background}
	First, we define a category $\text{MIC}_{[0,a]}(A,\varphi,\Fil^{\bullet})$. Let $a$ be a non-negative integer. Let $A$ be a $p$-torsion free commutative ring with an endomorphism $\varphi$, and let $\Fil^{\bullet} A$ be the trivial decreasing filtration of $A$ defined by
	\begin{equation*}
	\Fil^r A= \begin{cases}
	A & r \in \Z \cap \left( -\infty,0 \right] \\
	0 & r \in \Z \cap \left( 0, \infty \right).
	\end{cases}
	\end{equation*}
	Let $\Omega_{A}$ be a free $A$-module of rank $1$,
	let $\delta \colon A \to \Omega_{A}$ be a derivation, and let $\varphi^1\colon \Omega_A\to\Omega_A$ be a $\varphi$-semilinear homomorphism satisfying $\varphi^1\circ\delta=\delta\circ\varphi$.
	
	\begin{defi}
		We define the category $\text{MIC}_{[0,a]}(A,\varphi,\Fil^{\bullet})$ as follows.
		An object is a quartet $(M, \Fil^{\bullet} M, \varphi_M, \nabla)$ consisting of the following.
		\begin{enumerate}[(i)]
			\item $M$ is a free $A$-module of finite type. (Let $N$ be the rank of $M$.)
			\item A decreasing filtration $\Fil^r M$ ($r \in \N \cap [0,a]$) of $M$ satisfying the following conditions.
			\begin{enumerate}
				\item[({ii}-a)] There exists a basis $e_\nu$ ($\nu \in \N \cap [1,N]$) of $M$ and $r_\nu \in \N \cap [0,a]$ for each $\nu \in \N \cap [1,N]$ such that $\Fil^r M = \oplus_{\nu \in \N \cap [1,N]} \Fil^{r-r_{\nu}} A e_{\nu}$ for $r \in \N \cap [0,a]$.
			\end{enumerate}
			\item $\varphi_M \colon M \to M$ is a $\varphi$-semilinear endomorphism satisfying the following condition.
			\begin{enumerate}[({iii}-a)]
				\item $\varphi_M (\Fil^r M) \subset p^{r} M$ for $r \in \N \cap [0,a]$.
				\item $M=\sum_{r \in \N \cap [0,a]} A\cdot p^{- r} \varphi_M (\Fil^r M)$.
			\end{enumerate}
			\item $\nabla \colon M \to M \otimes_A \Omega_{A}$ is a connection on $M$ satisfying the following condition.
			\begin{enumerate}[({iv}-a)]
				\item $\nabla(\Fil^r M) \subset \Fil^{r-1} M \otimes_{ A} \Omega_{A}$ for $r \in \N \cap [1,a]$.
				\item $\nabla\circ\varphi_M=(\varphi_M\otimes\varphi^1)\circ \nabla$.
			\end{enumerate}
		\end{enumerate}
		A morphism is an $A$-linear homomorphism preserving the filtration, and compatible with $\varphi_M$ and $\nabla$.
	\end{defi}
	
	The Frobenius structure $\varphi_{ H^1_{\text{dR}}}$ recalled in \S\ref{introduction} satisfies the following.
	
	\begin{enumerate}[(i)]
		\item Assume that $a=0$, $A=B'$, and $\Omega_A=Ad\la$.
		Then we have $\left(U_{B'}, \nabla, \varphi_{U_{B'}}\right)\in \text{MIC}_{[0,0]}(A,\varphi,\Fil^{\bullet})$.
		(In the case $a=0$, we can forget filtrations because the condition~(ii-a) implies $\Fil^0 U=U$ and $\Fil^1 U=0$.)
		\item Assume that $a=1$, $A=\Z_p[[\la]]$, and $\Omega_A=Ad\log\la$.
		Put $H_0 =H_{B''} \otimes_{B''}\Z_p[[\la]]$, which has the connection $\nabla$, the Frobenius structure $\varphi_{H_0}$, and the filtration induced by those of $H_{B'}$.
		Then we have $\MICHB \in \text{MIC}_{[0,1]}(A,\varphi,\Fil^{\bullet}).$
		\item Assume that $a=1$, $A=B''$, and $\Omega_A=Ad\log\la$. Then we have $\left(H_{B''},\nabla,\varphi_{H_{B''}},\Fil^{\bullet}\right)\in \text{MIC}_{[0,1]}(A,\varphi,\Fil^{\bullet}).$
	\end{enumerate}
	
	Next, we define a category $\text{MF}_{[0,a]}^{\q{p},\text{cont}}(R',\varphi,\Gamma)$.
	Let $a$ be the same as above.
	As before Definition~\ref{def:q-diff_operator}, let $R'$ be one of the rings $R[[\la]]$, $R\langle\la\rangle$, and $R\left\langle\la, \frac{1}{g(\la)}\right\rangle$ $(g(\la)\in R[\la]\backslash(p,q-1)R[\la])$ equipped with the $(p,q-1)$-adic topology, and put $A=R'/(q-1)$.
	Then $R'$ is a commutative ring with an endomorphism $\varphi$ and an action $\rho$ of $\Gamma$ (See \S\ref{sec:basic}).
	Let $\Fil^{\bullet} R'$ be the decreasing filtration of $R'$ defined by
	\begin{equation*}
	\Fil^r R'= \begin{cases}
	R' & r \in \Z \cap (-\infty,0 ) \\
	(q-1)^r R' & r \in \Z \cap \left[ 0,\infty \right).
	\end{cases}
	\end{equation*}
	We give $\Gamma$ a discrete topology. Then, as $\varphi(q-1)=(q-1) \q{p}$ and $\gamma \circ \varphi =\varphi \circ \gamma $, the quartet $(R', \q{p}, \Fil^{\bullet} R', \varphi, \rho )$ satisfies all conditions in $\S 6,7$ of~\cite{Tsu}.
	
	Thus we can define the category $\text{MF}_{[0,a]}^{\q{p},\text{cont}}(R',\varphi,\Gamma)$.
	
	\begin{defi}[{\cite[\S7]{Tsu}}]
		\label{def:MF}
		We define the category $\text{MF}_{[0,a]}^{\q{p},\text{cont}}(R',\varphi,\Gamma)$ as follows.
		An object is a quartet $(M, \Fil^{\bullet} M, \varphi_M, \rho_M )$ consisting of the following.
		\begin{enumerate}[(i)]
			\item $M$ is a free $R'$-module of finite type. (Let $N$ be the rank of $M$.)
			\item A decreasing filtration $\Fil^r M$ ($r \in \N \cap [0,a]$) of $M$ satisfying the following
			conditions.
			\begin{enumerate}
				\item[({ii}-a)] There exists a basis $e_\nu$ ($\nu \in \N \cap [1,N]$) of $M$ and $r_\nu \in \N \cap [0,a]$ for each $\nu \in \N \cap [1,N]$ such that
				$\Fil^r M = \oplus_{\nu \in \N \cap [1,N]} \Fil^{r-r_{\nu}} R' e_{\nu}$ for $r \in \N \cap [0,a]$.
			\end{enumerate}
			\item $\varphi_M \colon M \to M$ is a $\varphi$-semilinear endomorphism satisfying the following conditions.
			\begin{enumerate}[({iii}-a)]
				\item $\varphi_M (\Fil^r M) \subset \q{p}^r M$ for $r \in \N \cap [0,a]$.
				\item $M=\sum_{r \in \N \cap [0,a]} R'\cdot \q{p}^{-r} \varphi_M (\Fil^r M)$.
			\end{enumerate}
			\item $\rho_M \colon \Gamma \to \text{Aut}(M)$ is a $\rho$-semilinear action and satisfies the following conditions.
			\begin{enumerate}[({iv}-a)]
				\item $\rho_M(g)(\Fil^r M)=\Fil^r M $ for $r \in \N \cap [0,a]$ and $g \in \Gamma$.
				\item $\rho_M(g) \varphi_M =\varphi_M \rho_M(g)$ for $g \in \Gamma$.
			\end{enumerate}
			\item $\Gamma \times M \to M ; (g,m) \mapsto \rho_M(g)m $ is continuous.
			\label{def:MFconti}
		\end{enumerate}
		A morphism is an $R'$-linear homomorphism preserving the filtrations, compatible with $\varphi_M$'s , and moreover $\Gamma$-equivariant.
	\end{defi}
	
	\begin{rem}
		\label{rem:q-connectionFrob}
		Let $M$ be a free $R'$-module equipped with a $q$-connection $\nabla_q\colon M\to M\otimes_{R'}q\Omega_{R'/R}$, and a $\varphi$-semilinear endomorphism $\varphi_M$. Let $\rho_M$ be the $\rho$-semilinear continuous action of $\Gamma$ on $M$ associated to $\nabla_q$. Then $\varphi_M$ is $\Gamma$-equivariant, i.e., satisfy the condition (iv-b) in Definition~\ref{def:MF} if and only if $(\varphi_M\otimes\varphi^1)\circ\nabla_q =\nabla_q\circ\varphi_M$. Here $\varphi^1$ is the $\varphi$-semilinear endomorphism of $q\Omega_{R'/R}$ defined by $\varphi^1(d\log\lambda)=\q{p}d\log\lambda$. Note that we have $\varphi^1\circ\delta_q=\delta_q\circ\varphi$ by Proposition~\ref{prop:qanalogue} (v).
	\end{rem}
	
	\begin{defi}
		We define $\text{MF}_{[0,a]}^{\q{p},q-1}(R',\varphi,\Gamma)$ to be the full subcategory of $\text{MF}_{[0,a]}^{\q{p},\text{cont}}(R',\varphi,\Gamma)$ consisting of $M$ such that the $\Gamma$-action $\rho_M$ on $M$ satisfies$ (\rho_M(\gamma)-1)(M) \subset (q-1) M. $
	\end{defi}
	The Frobenius $\varphi$ and the $\Gamma$-action on $R'$ induce those on $R'/(q-1)^{a+1}$, and we can define the decreasing filtration of $R'$ by $\Fil^{\bullet} (R'/(q-1)^{a+1}) = (\Fil^{\bullet} R') \cdot (R'/(q-1)^{a+1}) $. 
	Therefore we can define the categories $\text{MF}_{[0,a]}^{\q{p},\text{cont}}(R'/(q-1)^{a+1},\varphi,\Gamma)$ and $\text{MF}_{[0,a]}^{\q{p},q-1}(R'/(q-1)^{a+1},\varphi,\Gamma)$ in the same way.
	
	We equip $A$ with the derivation $\delta\colon A\to \Omega_{A,\log}=Ad\log\la$, the reduction mod $q-1$ of $\delta_q\colon R'\to q\Omega_{R'/R}$. Then the scalar extension by $R'\to A$ induces a functor
	\begin{equation}
	\label{mod_q-1_functor}
	\text{MF}_{[0,a]}^{\q{p}, q-1}(R',\varphi,\Gamma)
	\xrightarrow{\bmod q-1 }
	\text{MIC}_{[0,a]}(A,\varphi,\Fil^{\bullet}).
	\end{equation}
	(For $(M, \Fil^{\bullet} M, \varphi_M, \rho_M ) \in \text{MF}_{[0,a]}^{\q{p}, q-1}(R',\varphi,\Gamma)$,
	we define the connection of $M/(q-1)M$ by $(\frac{\rho_M(\gamma)-1}{q-1} \bmod q-1)\otimes d\log\la$.)
	
	For $a=0,1$, we have a functor induced by the base change from $A$ to $R'/(q-1)^{a+1}$:
	\begin{equation}
	\label{intro:ARJ}
	\text{MIC}_{[0,a]}(A,\varphi,\Fil^{\bullet})
	\to
	\text{MF}_{[0,a]}^{\q{p}, q-1}(R'/(q-1)^{a+1},\varphi,\Gamma).
	\end{equation}
	For $(M, \Fil^{\bullet} M, \varphi_M, \nabla) \in \text{MIC}_{[0,a]}(A,\varphi,\Fil^{\bullet})$,
	we define the $\rho$-{semilinear action of $\Gamma$ on $M\otimes_A R'/(q-1)^{a+1}$ by $1+(q-1)D^{\log}_q$ if $a=1$, and by $1$, i.e., the trivial action if $a=0$. 
	For the Frobenius structure, note that we have $\q{p}=p\cdot(\text{unit})$ in $R/(q-1)^{a+1}$ because $a+1\leq p-1$.
	The reduction mod $(q-1)^{a+1}$ gives an equivalence of categories~\cite{Tsu}
	\begin{equation}
	\label{intro:RJ}
	\text{MF}_{[0,a]}^{\q{p},\text{cont}}(R',\varphi,\Gamma)
	\to
	\text{MF}_{[0,a]}^{\q{p},\text{cont}}(R'/(q-1)^{a+1},\varphi,\Gamma),
	\end{equation}
	which induces an equivalence between full subcategories
	\begin{equation}
	\label{intro:RJ2}
	\text{MF}_{[0,a]}^{\q{p},q-1}(R',\varphi,\Gamma)
	\to
	\text{MF}_{[0,a]}^{\q{p},q-1}(R'/(q-1)^{a+1},\varphi,\Gamma),
	\end{equation}
	By combining~\eqref{intro:ARJ} and~\eqref{intro:RJ2}, we obtain a right inverse of the functor~\eqref{mod_q-1_functor}
	\begin{equation}
	\label{intro:AR}
	\text{MIC}_{[0,a]}(A,\varphi,\Fil^{\bullet})
	\to
	\text{MF}_{[0,a]}^{\q{p}, q-1}(R',\varphi,\Gamma).
	\end{equation}
	If $a=0$, then the functor~\eqref{intro:ARJ} is an equivalence of categories.
	Hence the functors~\eqref{mod_q-1_functor} and~\eqref{intro:AR} are equivalences of categories quasi-inverse of each other.
	By applying this functor, we obtain the canonical $q$-deformations of the objects $(U_{B'},\nabla,\varphi_{U_{B'}})$ and $\MICHB$ to $\text{MF}_{[0,a]}^{\q{p},q-1}(R',\varphi,\Gamma)$ for $R'=S'$, $a=0$ and $R'=R[[\la]]$, $a=1$, respectively.
	
	We want to know whether the two canonical deformations are related to the $q$-hypergeometric differential equation~\eqref{q-hyp-geom-eq}.
	
	\begin{rem}
		To show the equivalence of categories~\eqref{intro:RJ}, we have to check that $R'$ with the $\Gamma$-action, $\varphi$, and the filtration satisfies the conditions in~\cite[{\S6,\S7}]{Tsu}, specifically~\cite[Conditions 39 and 54]{Tsu}, while they are trivial.
	\end{rem}
	
	\section{Main theorems}
	\label{Main}
	As before Definition~\ref{def:q-diff_operator}, let $R'$ be one of the rings $R[[\la]]$, $R\langle\la\rangle$, and $R\left\langle\la, \frac{1}{g(\la)}\right\rangle$ $(g(\la)\in R[\la]\backslash(p,q-1)R[\la])$ equipped with the $(p,q-1)$-adic topology.
	Let $\nabla_q \colon M' \to M' \otimes_{R'} \frac{1}{1-\la}q\Omega_{R'/R}$ be the $q$-connection associated with the $q$-hypergeometric differential equation on a free $R'$-module $M'$ of rank $2$ introduced later in \S\ref{DFandConn}.
	The $R'$-module $M'$ is also equipped with a filtration, and the reduction mod $q-1$ of $(M',\nabla_q,\Fil^\bullet)$ is canonically isomorphic to $(H_{B'}\otimes_{B'}A,\nabla,\Fil^\bullet)$ (see~\eqref{qHGconnectionModq-1}):
	\begin{equation}
	\label{Mprime_mod_q-1}
	(M',\nabla_q,\Fil^\bullet)\otimes_{R'}A\cong(H_{B'}\otimes_{B'}A,\nabla,\Fil^\bullet).
	\end{equation}
	Let $S'=R\left\langle \la,\frac{1}{h(\la)}\right\rangle $ be the $(p,q-1)$-adic completion of $R\left[ \la,\frac{1}{h(\la)}\right] $, where $h(\la)$ is the Hasse polynomial (see \S\ref{introduction}).
	In this paper, we give an appropriate Frobenius structure which is compatible with the $q$-connection on a rank $1$ and $\nabla_q$-stable submodule of $M'$ (resp. $M'$ itself) in the case $R'=S'$ (resp. $\Rla$).
	\begin{theo}
		\label{Main1}
		Assume that $R'=S'$.
		There exists a pair $(U',\varphi_{U'})$ which satisfies the following conditions.
		\begin{enumerate}[(i)]
			\item $U'$ is a direct factor of the $S'$-module $M'$ free of rank $1$ satisfying $\nabla_q(U')\subset U'\otimes_{R'}q\Omega_{R'/R}$. Let $\rho_{U'}$ be the $\rho$-semilinear continuous action of $\Gamma$ on $U'$ associated to $\nabla_q\vert_{U'}$.
			\item $\varphi_{U'}$ is a $\varphi$-semilinear endomorphism of $U'$
			and satisfies $S'\cdot\varphi_{U'}(U')=U'$. \label{const:FphiF}
			\item The triple $(U',\varphi_{U'},\rho_{U'})$ is an object of $\text{MF}_{[0,0]}^{\q{p}, q-1}(S', \varphi, \Gamma)$, i.e., $\rho_{U'}(\gamma)\circ \varphi_{U'}=\varphi_{U'} \circ \rho_{U'}(\gamma)$.
			\item The isomorphism~\eqref{Mprime_mod_q-1} induces a $B'$-linear isomorphism $U'\otimes_{S'}B'\xrightarrow{\cong}U_{B'}$, and it defines an isomorphism between $(U_{B'},\nabla,\varphi_{U_{B'}})$ and the image of $(U',\varphi_{U'},\rho_{U'})$ under the equivalence of categories (see after~\eqref{intro:AR})
			\[
			\text{MF}_{[0,0]}^{\q{p}, q-1}({S'}, \varphi, \Gamma) \to
			\text{MIC}_{[0,0]}(B',\varphi,\Fil^{\bullet}).
			\]
		\end{enumerate}
	\end{theo}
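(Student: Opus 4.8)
The plan is to carry out Dwork's construction of the unit root submodule in the $q$-deformed setting. Let $f_q\in\Rla$ be the solution of the $q$-hypergeometric equation~\eqref{q-hyp-geom-eq} with $f_q\equiv 1\bmod\la$ constructed in~\S\ref{qsolu_sec}, a $q$-deformation of $f(\la)=F(\tfrac12,\tfrac12;1;\la)$ (so $f_q\equiv f\bmod(q-1)$). From~\S\ref{DFandConn} I use the $q$-analogue of the horizontality recalled in~\S\ref{introduction}: writing $\omega_q$ for the basis element of $M'$ deforming $\omega$ and $D_q^{\log}$ for the associated operator, the element $\overline{e}'_{2,q}:=\la(1-\la)\bigl(g_q\,\omega_q-D_q(\omega_q)\bigr)$, where $g_q:=\tfrac{1}{f_q}d_qf_q$, satisfies $\nabla_q(f_q\overline{e}'_{2,q})=0$, exactly parallel to $D(f\overline{e}'_2)=0$. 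Since $f_q\in 1+\la\Rla$ is a unit in $\Rla$, a priori $g_q\in\Rla$ and $u_q:=\varphi(f_q)/f_q\in\Rla^{\times}$; the heart of the matter, as in~\cite{Dw}, is to improve this to $g_q\in S'\subseteq\Rla$, so that $\overline{e}'_{2,q}\in M'$.

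For this I invoke the $(p,q-1)$-adic formal congruence of~\S\ref{Dwsec}, the $q$-analogue of Dwork's convergence estimates, which yields $u_q\in(S')^{\times}$. From this I deduce $g_q\in S'$: applying $d_q$ to $\varphi(f_q)=u_qf_q$ and using the $q$-Leibniz rule (Proposition~\ref{q-Leibniz}), the identity $\gamma=1+(q-1)\la d_q$, and $d_q\varphi=\q{p}\la^{p-1}\varphi d_q$ (Proposition~\ref{prop:qanalogue}(v)) gives, after dividing by $\gamma(u_q)\in(S')^{\times}$,
\[
g_q-\q{p}\la^{p-1}\frac{u_q}{\gamma(u_q)}\,\varphi(g_q)=-\frac{1}{\gamma(u_q)}\,d_qu_q\in S'.
\]
As $\q{p}\in(p,q-1)R$, the $S'$-endomorphism $g\mapsto\q{p}\la^{p-1}\frac{u_q}{\gamma(u_q)}\varphi(g)$ is topologically nilpotent on the $(p,q-1)$-adically complete ring $S'$, so this equation has a unique solution in $S'$; since $g_q\in\Rla$ also solves it and $S'\hookrightarrow\Rla$, we get $g_q\in S'$, hence $\overline{e}'_{2,q}\in M'$. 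Because $(\omega_q,\overline{e}'_{2,q})$ reduces modulo $(p,q-1)$ to a basis of $H^1_{\text{dR}}\otimes\F_p$ (the classical fact that $(\overline{e}'_1,\overline{e}'_2)$ is a basis of $H_{B'}$, via~\eqref{Mprime_mod_q-1}), Nakayama over $S'$ shows it is a basis of $M'$; thus $U':=S'\overline{e}'_{2,q}$ is a direct factor of $M'$ free of rank $1$. The relation $\nabla_q(f_q\overline{e}'_{2,q})=0$ forces $\nabla_q(\overline{e}'_{2,q})=-\frac{\la d_qf_q}{\gamma(f_q)}\,\overline{e}'_{2,q}\otimes d\log\la$ with $\frac{\la d_qf_q}{\gamma(f_q)}=\frac{\la g_q}{1+(q-1)\la g_q}\in S'$ (in particular the pole along $\la=1$ cancels), so $\nabla_q(U')\subset U'\otimes_{S'}q\Omega_{R'/R}$, which is~(i). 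A direct computation gives $\rho_{U'}(\gamma)(\overline{e}'_{2,q})=\frac{f_q}{\gamma(f_q)}\,\overline{e}'_{2,q}$, and $\frac{f_q}{\gamma(f_q)}-1=-\frac{(q-1)\la d_qf_q}{\gamma(f_q)}\in(q-1)S'$ already shows $(\rho_{U'}(\gamma)-1)(U')\subset(q-1)U'$.

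Next I define $\varphi_{U'}(\overline{e}'_{2,q}):=\varepsilon\,\frac{f_q}{\varphi(f_q)}\,\overline{e}'_{2,q}=\frac{\varepsilon}{u_q}\,\overline{e}'_{2,q}$ with $\varepsilon=(-1)^{(p-1)/2}$, extended $\varphi$-semilinearly; as $\varepsilon/u_q\in(S')^{\times}$ this is well defined and $S'\cdot\varphi_{U'}(U')=U'$, giving~(ii). For~(iii), both composites $\rho_{U'}(\gamma)\varphi_{U'}$ and $\varphi_{U'}\rho_{U'}(\gamma)$ are $(\varphi\circ\gamma)$-semilinear (note $\varphi\circ\gamma=\gamma\circ\varphi$), so it suffices to compare them on the generator $\overline{e}'_{2,q}$; using the $\rho$-semilinearity of $\rho_{U'}$, the identity $\gamma(u_q)=\varphi\gamma(f_q)/\gamma(f_q)$, and $\gamma\circ\varphi=\varphi\circ\gamma$ (Proposition~\ref{prop:qanalogue}(iv)), both sides equal $\frac{\varepsilon f_q}{\varphi\gamma(f_q)}\,\overline{e}'_{2,q}$. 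The remaining conditions of $\text{MF}_{[0,0]}^{\q{p},q-1}(S',\varphi,\Gamma)$ hold automatically for $a=0$ (the filtration is the trivial one; continuity of the $\Gamma$-action holds because $\rho_{U'}$ arises from a $q$-connection), so $(U',\varphi_{U'},\rho_{U'})$ is an object of this category. For~(iv), reduce modulo $q-1$: under $S'\to B'$ one has $f_q\mapsto f$, $g_q\mapsto\frac1f\frac{df}{d\la}$, $u_q\mapsto\varphi(f)/f$, and by~\eqref{Mprime_mod_q-1} the basis $(\omega_q,\overline{e}'_{2,q})$ maps to $(\overline{e}'_1,\overline{e}'_2)$, so $U'\otimes_{S'}B'\xrightarrow{\cong}B'\overline{e}'_2=U_{B'}$; the connection induced by $\rho_{U'}$ sends $\overline{e}'_2\mapsto-\la\frac1f\frac{df}{d\la}\overline{e}'_2\otimes d\log\la=\nabla(\overline{e}'_2)$ (using $D(\overline{e}'_2)=-\frac1f\frac{df}{d\la}\overline{e}'_2$), and $\varphi_{U'}\bmod(q-1)$ sends $\overline{e}'_2\mapsto\varepsilon\frac{f}{\varphi(f)}\overline{e}'_2=\varphi_{U_{B'}}(\overline{e}'_2)$ by~\eqref{unitrootphi}. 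Hence $(U',\varphi_{U'},\rho_{U'})$ maps to $(U_{B'},\nabla,\varphi_{U_{B'}})$ under the equivalence~\eqref{mod_q-1_functor}.

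I expect the decisive step to be securing $u_q=\varphi(f_q)/f_q\in(S')^{\times}$, i.e.\ the $q$-deformation of Dwork's convergence argument, realized as the $(p,q-1)$-adic formal congruence of~\S\ref{Dwsec}; once that is in hand, the improvement $g_q\in S'$ is a routine Frobenius-iteration argument, and the verification of the $\text{MF}$-axioms and of the comparison mod $q-1$ is essentially formal, resting only on $\gamma\varphi=\varphi\gamma$, $\q{p}\in(p,q-1)$, and the $(p,q-1)$-adic completeness of $S'$.
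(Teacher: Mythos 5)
Your proposal is correct and follows essentially the same route as the paper: you identify the decisive input as Theorem~\ref{Dw3} (the $(p,q-1)$-adic formal congruence giving $u_q=\varphi(F)/F\in(S')^\times$), deduce $g_q=d_qF/F\in S'$ by a Frobenius iteration, set $U'=S'e_2'$, and define $\varphi_{U'}(e_2')=\varepsilon\,u_q^{-1}e_2'$, which is exactly the paper's construction (\S\ref{proofsec}, with your $f_q,g_q,\omega_q,\overline{e}'_{2,q}$ corresponding to $F,\eta,e_1',e_2'$). The only stylistic differences are that you recast the iteration of Corollary~\ref{Dwcor} as a fixed-point argument for the topologically nilpotent operator $g\mapsto\q{p}\la^{p-1}\tfrac{u_q}{\gamma(u_q)}\varphi(g)$, and you obtain $d_qF/\gamma(F)\in S'$ from $g_q\in S'$ via the identity $\gamma(F)/F=1+(q-1)\la g_q\in(S')^\times$ rather than by a second iteration — a slight simplification, but the same approach.
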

	
	\begin{theo}
		\label{Main2}
		Assume that $R'=\Rla$, and let $\rho_{M'}$ be the continuous $\rho$-semilinear action of $\Gamma$ on $M'$ associated to $\nabla_q$. (Note that we have $\frac{1}{1-\la}q\Omega_{R'/R}=q\Omega_{R'/R}$ because $1-\la\in R[[\lambda]]^{\times}$.)
		Then there exists a $\varphi$-semilinear endomorphism $\varphi_{M'}$ on $M'$ which satisfies the following conditions.
		\begin{enumerate}[(i)]
			\item There exists a basis $(f_1, f_2)$ of $M'$ such that $\varphi( e_1')=\q{p}f_1, \varphi(e_2)=f_2$. \label{Main2:cond1}
			\item The quartet $(M',\Fil^{\bullet}M',\varphi_{M'},\rho_{M'})$ is an object of $\text{MF}_{[0,1]}^{\q{p},q-1}(\Rla,\varphi,\Gamma)$, i.e., $\rho_{M'}(\gamma)(\Fil^1M')=\Fil^1M'$ and $\rho_{M'}(\gamma)\circ \varphi_{M'}=\varphi_{M'} \circ \rho_{M'}(\gamma)$.\label{Main2:cond2}
			\item The isomorphism~\eqref{Mprime_mod_q-1} induces an isomorphism
			\[
			(M',\varphi_{M'},\nabla_q,\Fil^{\bullet}M')\otimes_{R[[\la]]}\Z_p[[\la]]
			\cong \MICHB.
			\]
			Moreover, it can be lifted to an isomorphism between $(M',\Fil^{\bullet}M',\varphi_{M'},\rho_{M'}$) and the canonical $q$-deformation of $\MICHB$ constructed after~\eqref{intro:AR}.
		\end{enumerate}
	\end{theo}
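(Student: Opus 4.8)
The plan is to construct $\varphi_{M'}$ explicitly in terms of the $q$-hypergeometric solutions from \S\ref{qsolu_sec} together with the $q$-analogue of Dwork's formal congruence from \S\ref{Dwsec}, in a way that parallels the classical construction recalled in \S\ref{introduction}. First I would recall from \S\ref{DFandConn} the explicit shape of $\nabla_q$ on $M'$ with respect to a chosen basis adapted to $\Fil^\bullet M'$ (say $(\omega', \la(1-\la)D_q^{\log}(\omega'))$ or its $q$-analogue), and identify the rank-$1$ $\nabla_q$-stable submodule $U'\subset M'\otimes_{S'}S''$ produced in Theorem~\ref{Main1}, whose $q$-Frobenius $\varphi_{U'}$ is already known to satisfy the formula analogous to~\eqref{unitrootphi}, namely multiplication by a unit involving $\frac{f_q}{\varphi(f_q)}$ where $f_q$ is the explicit solution of~\eqref{q-hyp-geom-eq}. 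The second step is to define $\varphi_{M'}$ on the complementary basis vector $f_1$ (the image of $\omega$, spanning $\Fil^1$) by the $q$-analogue of~\eqref{dRcohphi}: set $\varphi_{M'}(e_1') = \q{p}\,\varepsilon\,\frac{\varphi(f_q)}{f_q}\,f_1 + \q{p}\, b_q\, f_2$ for an appropriate $b_q\in\Rla$, where $b_q$ is forced by requiring compatibility with $\nabla_q$ (condition (iv-b)/Remark~\ref{rem:q-connectionFrob}) and by the determinant condition coming from $\wedge^2 M'$.

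Next I would verify the three itemized conditions in turn. Condition~\eqref{Main2:cond1} holds by construction once one checks that $e_1'$ and $e_2$ (the $q$-deformed basis) indeed form a basis of $M'$ and that $\varphi_{M'}(e_1')\in\q{p}M'$, $\varphi_{M'}(e_2)\in M'$ with $\varphi_{M'}(e_2)$ generating a complement after reduction; this reduces to showing that the relevant ratios lie in $\Rla$, which is exactly where the $q$-analogue of Dwork's congruence (the $(p,q-1)$-adic formal congruence of \S\ref{Dwsec}) is invoked — it yields $\frac{\varphi(f_q)}{f_q}\in S'^\times$ and hence $\frac{1}{f_q}d_q f_q\in S'$, the $q$-analogue of the input $\frac{\varphi(f)}{f}\in\Z_p\langle\la,\tfrac1{h(\la)}\rangle^\times$. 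For condition~\eqref{Main2:cond2}, the filtration-stability $\rho_{M'}(\gamma)(\Fil^1M')=\Fil^1M'$ should follow from the fact that $\Fil^1M'$ is spanned by $\omega'$ and $\nabla_q$ preserves it up to the Kodaira–Spencer shift (so $D_q^{\log}$ maps $\Fil^1$ into $M'$ with the right pole order, giving $\gamma_{M'}(\Fil^1)=\Fil^1$); the commutation $\rho_{M'}(\gamma)\circ\varphi_{M'}=\varphi_{M'}\circ\rho_{M'}(\gamma)$ is equivalent, by Remark~\ref{rem:q-connectionFrob}, to $(\varphi_{M'}\otimes\varphi^1)\circ\nabla_q=\nabla_q\circ\varphi_{M'}$, which I would check on the basis $(e_1',e_2)$ using the explicit matrix of $\nabla_q$ from \S\ref{DFandConn}, Proposition~\ref{prop:qanalogue}(v) (to handle $d_q\varphi=\q{p}\la^{p-1}\varphi d_q$), and the differential equation satisfied by $f_q$; this computation simultaneously pins down $b_q$.

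Finally, for condition~(iii), reducing mod $q-1$ turns $\varphi_{M'}$ into the classical $\varphi_{H_0}$ on $\MICHB$ by~\eqref{Mprime_mod_q-1}, since $\q{p}\equiv p$, $f_q\equiv f$, $d_q\equiv\frac{d}{d\la}$, and the defining formulas degenerate precisely to~\eqref{unitrootphi} and~\eqref{dRcohphi}; to see that $(M',\Fil^\bullet M',\varphi_{M'},\rho_{M'})$ is \emph{the} canonical $q$-deformation constructed after~\eqref{intro:AR}, it suffices to observe that the right inverse~\eqref{intro:AR} is characterized, via the equivalence~\eqref{intro:RJ2} and the functor~\eqref{intro:ARJ}, by its reduction mod $(q-1)^{2}$, so I would check that our $(M',\ldots)\otimes_{R[[\la]]}R[[\la]]/(q-1)^2$ agrees with the image of $\MICHB$ under~\eqref{intro:ARJ} — this is again a direct comparison using that $\rho_{M'}(\gamma)=1+(q-1)D_q^{\log}$ matches the prescription in~\eqref{intro:ARJ} for $a=1$. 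The main obstacle I anticipate is the integrality of $b_q$: one must show the element forced by the $\nabla_q$-compatibility actually lies in $\Rla$ (not merely in $Q((\la))$ or $\Rla[\tfrac1{1-\la}]$), which should follow from the $q$-Wronskian computation of \S\ref{qsolu_sec} combined with the formal congruence of \S\ref{Dwsec}, but pinning down the correct normalization of the $q$-logarithmic solution and tracking the pole along $\la=1$ through Frobenius (which does not preserve $\la=1$, cf.\ the discussion of $B''$) is the delicate point; a secondary subtlety is verifying condition~(iii-b) of Definition~\ref{def:MF}, i.e.\ that $p^{-1}\varphi_{M'}(\Fil^1)$ together with $\varphi_{M'}(\Fil^0)$ generates $M'$, which amounts to the non-degeneracy of the $q$-deformed Kodaira–Spencer map and should reduce to the classical statement by Nakayama mod $(q-1)$.
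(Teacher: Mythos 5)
Your outline for parts (i) and (ii) matches the paper's strategy closely: define $\varphi_{M'}$ via the $q$-solution $F$ and the $q$-analogue of Dwork's congruence (\S\ref{Dwsec}), obtain the correction term by requiring the second $\log_q$-solution $H$ to map to a horizontal section, use the Wronskian (Lemma~\ref{Wronskian}) to simplify the horizontal section $\frac1F e_1'+He_2'$, and check $\gamma_{M'}\circ\varphi_{M'}=\varphi_{M'}\circ\gamma_{M'}$ via Remark~\ref{rem:q-connectionFrob} and Proposition~\ref{prop:qanalogue}(v). The integrality of the correction term is indeed where the $(p,q-1)$-adic congruence earns its keep (the paper splits $H$ into four pieces to verify $\varphi(F)H-\frac{1}{\q{p}}F\varphi(H)\in\Rla$). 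One small imprecision: the compatibility with $\nabla_q$ does \emph{not} pin down the correction term $a$ (your $b_q$) uniquely, and there is no ``determinant condition'' in the paper doing so. Since $\ker(d_q)$ on $\Rla$ is $R$, the constraint determines $a$ only up to an additive ambiguity $c\, F\varphi(F)$ with $c\in R$ free; see~\eqref{eq:FHac}.

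The genuine gap is in part (iii). You treat the identification of $(M',\Fil^\bullet,\varphi_{M'},\rho_{M'})\bmod(q-1)^2$ with the image of $\MICHB$ under~\eqref{intro:ARJ} as a ``direct comparison,'' but the two objects do \emph{not} coincide on the nose: the image of $\MICHB$ has Frobenius matrix $A_0$ literally (pure base change), whereas your construction gives $A_0+(q-1)A_1$ with $A_1\neq 0$. So one must produce an \emph{isomorphism} $g=1+(q-1)B_1$, $B_1\in M_2(\Z_p[[\la]])$, between them in $\text{MF}^{\q{p},q-1}_{[0,1]}$, which is equivalent to solving $(1-\mathcal F)(B_1)=-A_1A_0^{-1}$ where $\mathcal F(X)=pA_0\varphi(X)A_0^{-1}$. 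This is where the free parameter $c\in R$ becomes essential: the operator $1-\mathcal F$ is invertible only on $\la M_2(\Z_p[[\la]])$ (via the geometric series $\sum_n\mathcal F^n$, contracting because $\mathcal F^n(\la M_2)\subset\la^{p^n}M_2$), and solvability at $\la=0$ is an extra condition that is only met for a specific normalization of $c$ (in the paper, $b_1=\tfrac{p}{2}b_0$; see Lemma~\ref{FrobEquation}). Without the snake-lemma-type argument splitting $M_2(\Z_p[[\la]])$ into $\la M_2(\Z_p[[\la]])$ and the constant part, and without the explicit choice of $c$, the claim that the construction ``agrees with'' the canonical $q$-deformation is unjustified. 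Your proposal neither flags the ambiguity in $c$ nor the need to construct the comparison isomorphism, which is the main content of \S\ref{sec:answer}.
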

	
	\section{\texorpdfstring{$q$}{q}-hypergeometric differential equation and its solutions}
	\label{qsolu_sec}
	In this section, we give explicit solutions of the $q$-hypergeometric differential equation~\cite{BHS} defined by
	\begin{equation}
	\label{eq:qhyp}
	L[f]= q\lambda(1-q\lambda) d^2_q{f} +(1-(1+\q{2}-2\q{\tfrac{1}{2}}) \la) d_q{f} -\q{\tfrac{1}{2}}^2 f=0,
	\end{equation}
	which is a $q$-analogue of the classical hypergeometric differential equation~\cite{Put}
	\[
	\la(1-\la) \dtwo f +(1-2\la) \diff f -\frac{1}{4} f=0.
	\]
	For convenience, we put $\alpha = 1+\q{2}-2\q{\tfrac{1}{2}}$.
	
	To describe the solutions, we introduce $q$-logarithmic function $\log_q(\text{-})$. (Note that the ``$q$" in ``$\log_q$" does not mean a base.)
	Since $d_q$ does not have compatibility with the translation $\la \mapsto \la + a$ for $a \in \Z_p$, we have to define $\log_q \la $ and $\log_q(1-\la)$ respectively.
	Since $\varphi$ and $\gamma$ are injective as endomorphisms of $\Rla$, we can extend them to endomorphisms of $Q((\la))$.
	First, put $ \log_q(1-\la)=- \sum_{n=1}^{\infty} \frac{\la^n}{\q{n}}.$
	Then, $\log_q(1-\la)$ is an element of $Q((\la))$ and $d_q \log_q (1-\la) =\frac{1}{\la-1}$.
	Next, let $\log_q \la$ be a formal variable and extend $\varphi$ and $\gamma$ to endomorphisms of $\Qlalog$ by
	\[
	\varphi(\log_q \la) =\q{p} \log_q \la \quad\text{and} \quad \gamma(\log_q \la) =q-1+ \log_q \la.
	\]
	Then, by
	\[
	\gamma(\varphi(\log_q \la))=(q-1)\q{p}+\q{p} \log_q \la=\varphi(\gamma(\log_q \la)),
	\]
	the commutativity $\gamma \circ \varphi =\varphi \circ \gamma $ is satisfied on $\Qlalog$. Moreover, since $(q-1)\la\in Q((\la))^{\times} $, we can extend $d_q$ of $Q((\la))$ to $\Qlalog$ by $d_q=\frac{\gamma-1}{(q-1)\la}$. We have $d_q \log_q \la =\frac{1}{\la}.$
	In the theorem below, we give explicit solutions of~\eqref{eq:qhyp} in $\Qlalog$.
	Put $a_n=\prod^{n-1}_{i=0} \left(\frac{[i+\frac{1}{2}]_q}{[i+1]_q} \right)^2$ for a non-negative integer $n$.
	
	\begin{theo}
		\label{qhyp_solution}
		We have $L[F]=L[H]=0$, where
		\[
		F=\sum^{\infty}_{n=0} a_n \lambda^n, \qquad
		H = F \log_q \la-F \log_q (1- \la)-
		\sum^{\infty}_{n=1}a_n \lambda^n \sum^{n}_{i=1} \left(\frac{2}{[i]_q} +q-1\right).
		\]
	\end{theo}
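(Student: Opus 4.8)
The plan is to verify directly that the power series $F$ and the expression $H$ are annihilated by the operator $L$, by reducing everything to recurrences among the coefficients $a_n$. For $F=\sum_{n\ge 0} a_n\lambda^n$, I would first record the basic formulas $d_q(\lambda^n)=\q{n}\lambda^{n-1}$ and $d_q^2(\lambda^n)=\q{n}\q{n-1}\lambda^{n-2}$, together with $\gamma(\lambda^n)=q^n\lambda^n$, so that each term of $L[F]$ becomes an explicit power series. Collecting the coefficient of $\lambda^n$ in $L[F]=0$ yields a two-term recurrence relating $a_{n+1}$ and $a_n$; one then checks that the defining product $a_n=\prod_{i=0}^{n-1}\bigl(\q{i+\tfrac12}/\q{i+1}\bigr)^2$ satisfies exactly this recurrence. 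The algebra here is a $q$-analogue of the classical check that $F(\tfrac12,\tfrac12;1;\lambda)$ solves~\eqref{hyp-geom-eq}; the key identity needed is a manipulation of $q$-numbers such as $\q{n+\tfrac12}$ versus $\q{2n+1}$, $\q{n+1}$, handled via Proposition~\ref{prop:qanalogue}(i) and the relation $\alpha=1+\q{2}-2\q{\tfrac12}$.

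For $H$, the strategy is to exploit linearity and the Leibniz-type behaviour of $d_q$ on products involving $\log_q\lambda$ and $\log_q(1-\lambda)$. Write $H=F\log_q\lambda - F\log_q(1-\lambda) - G$, where $G=\sum_{n\ge 1}a_n\lambda^n\sum_{i=1}^n\bigl(\tfrac{2}{\q{i}}+q-1\bigr)$. Using $d_q\log_q\lambda=\tfrac1\lambda$, $d_q\log_q(1-\lambda)=\tfrac1{\lambda-1}$, $\gamma(\log_q\lambda)=q-1+\log_q\lambda$, and $\gamma(\log_q(1-\lambda))$, I would compute $d_q H$ and $d_q^2 H$ via the $q$-Leibniz rule (Proposition~\ref{q-Leibniz}), so that $L[H]$ splits as $(\log_q\lambda - \log_q(1-\lambda))\,L[F]$ plus a ``lower-order'' power series in $Q((\lambda))$ with no $\log_q$ factor. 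The first part vanishes by $L[F]=0$. What remains is to show that the $\log_q$-free remainder — a combination of $F$, $d_qF$, the two $q$-logarithm derivatives, and $L[G]$ — is identically zero; this is again a coefficientwise verification, now matching the extra term $\sum_{i=1}^n\bigl(\tfrac2{\q{i}}+q-1\bigr)$ against the ``derivative-in-parameter'' contributions, exactly as the classical second solution of the hypergeometric equation involves $\log\lambda$ plus a correction with harmonic-number coefficients $\sum 2/i$.

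The main obstacle I anticipate is the bookkeeping in the $\log_q$-free remainder for $H$: one must carefully track the cross terms coming from $d_q(F\log_q\lambda)=d_q(F)\gamma(\log_q\lambda)+F\cdot\tfrac1\lambda$ and its second iterate, where the shift $\gamma(\log_q\lambda)=\log_q\lambda+(q-1)$ produces an extra $(q-1)d_q(F)$ that must be absorbed — and this is precisely why the correction sum in $H$ carries the summand $q-1$ alongside $2/\q{i}$, a purely $q$-phenomenon with no classical analogue. Getting the recurrence for the coefficients of $G$ to close requires the telescoping identity $\sum_{i=1}^{n+1}c_i-\sum_{i=1}^n c_i=c_{n+1}$ with $c_i=\tfrac2{\q{i}}+q-1$, combined with the same $q$-number identities used for $F$. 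Once the recurrences for $a_n$ and for the coefficients of $G$ are both established and shown compatible, $L[H]=0$ follows. Convergence issues do not arise: we work formally in $\Qlalog$, and both series lie in $Q((\lambda))[\log_q\lambda]$ by construction.
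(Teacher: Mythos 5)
Your plan matches the paper's proof: both establish $L[F]=0$ via the coefficient recurrence $\q{n+1}^2 a_{n+1}=\q{n+\tfrac12}^2 a_n$, then compute $L[F\log_q\lambda]$ and $L[F\log_q(1-\lambda)]$ using $L[F]=0$ to eliminate the $\log_q$ terms, and finally match the resulting $\log_q$-free remainder against $L$ applied to the correction sum. The paper organizes that last step through the truncations $F_{\geq r}$ (with $L[F_{\geq r}]=\q{r}^2a_r\lambda^{r-1}$) and a combinatorial $q$-number identity (Lemma~\ref{aglem}) rather than a raw coefficient comparison, but this is a bookkeeping difference within the same strategy, not a different route.
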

	
	First, we give some lemma.
	\begin{lemm}
		\label{lem:gd}
		(i) $d_q \gamma =q \gamma d_q$ \label{lem:gd1}; \quad
		(ii)$d_q \gamma + \gamma d_q = (1+q)\left(d_q+(q-1)\la d_q^2\right) $ \label{lem:gd2}; \\
		(iii)$\gamma^2 = 1+ (q^2-1)\la d_q +q(q-1)^2 \la^2 d_q^2 \label{lem:gd3}$.
	\end{lemm}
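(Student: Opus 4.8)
The plan is to regard all three identities as equalities of $R$-linear operators on $\Qlalog$ (a fortiori they then hold on $R'$), and to derive each of them from the single structural relation $\gamma=1+(q-1)\la d_q$ of Definition~\ref{def:q-diff_operator}, together with just two elementary inputs: that $d_q(\la)=1$, which is immediate from $\gamma(\la)=q\la$, and the $q$-Leibniz rule of Proposition~\ref{q-Leibniz}. Since $d_q$, $\gamma$ and $\varphi$ are defined on $\Qlalog$ by exactly the same formulas as on $R'$ and the Leibniz rule persists there (it follows from $(\gamma-1)(xy)=(\gamma-1)(x)\gamma(y)+x(\gamma-1)(y)$ after dividing by the unit $(q-1)\la$), any identity proved formally from these ingredients is automatically valid on $\Qlalog$; this is what makes it usable when verifying $L[F]=L[H]=0$ in Theorem~\ref{qhyp_solution}.

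First I would record the two commutation rules that carry essentially all the computational content. From $d_q(\la)=1$ and the $q$-Leibniz rule one gets $d_q(\la x)=\gamma(x)+\la d_q(x)$ for all $x$, i.e.\ $d_q\circ(\la\,\cdot)=\gamma+\la d_q$ as operators; and from multiplicativity of $\gamma$ together with $\gamma(\la)=q\la$ one gets $\gamma\circ(\la\,\cdot)=(q\la\,\cdot)\circ\gamma$. Everything else is bookkeeping.

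For (i) I would apply the defining formula $d_q=\frac{\gamma-1}{(q-1)\la}$ twice: $d_q(\gamma(x))=\frac{\gamma^2(x)-\gamma(x)}{(q-1)\la}$, whereas applying $\gamma$ to $d_q(x)$ and using that $\gamma$ fixes $q-1$ and sends $\la$ to $q\la$ gives $\gamma(d_q(x))=\frac{\gamma^2(x)-\gamma(x)}{q(q-1)\la}$; comparing the two yields $d_q\gamma=q\gamma d_q$. Identity (ii) is then immediate: from $\gamma=1+(q-1)\la d_q$ one has $\gamma d_q=d_q+(q-1)\la d_q^2$, and using (i),
\[
d_q\gamma+\gamma d_q=q\,\gamma d_q+\gamma d_q=(1+q)\,\gamma d_q=(1+q)\bigl(d_q+(q-1)\la d_q^2\bigr).
\]
For (iii) I would expand $\gamma^2=\gamma\circ\bigl(1+(q-1)\la d_q\bigr)$ and push the outer $\gamma$ past multiplication by $\la$ via $\gamma\circ(\la\,\cdot)=(q\la\,\cdot)\circ\gamma$, getting $\gamma^2=\gamma+q(q-1)\la\,\gamma d_q$; substituting $\gamma d_q=d_q+(q-1)\la d_q^2$ and $\gamma=1+(q-1)\la d_q$ and collecting the coefficient $(q-1)+q(q-1)=q^2-1$ of $\la d_q$ gives $\gamma^2=1+(q^2-1)\la d_q+q(q-1)^2\la^2 d_q^2$, which is (iii). (Alternatively one can expand $(1+(q-1)\la d_q)^2$ directly and use $d_q\circ(\la\,\cdot)=\gamma+\la d_q$ to rewrite $(\la d_q)^2=\la d_q+q\la^2 d_q^2$.)

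There is no genuine obstacle here; the computations are purely formal. The only point requiring attention is the noncommutativity between $d_q$ (or $\gamma$) and multiplication by $\la$ — the extra factor of $\gamma$ introduced by the $q$-Leibniz rule, respectively the factor $q$ introduced by $\gamma(\la)=q\la$ — which is precisely what the two commutation rules of the second step package cleanly, so that keeping track of it correctly is all that is needed.
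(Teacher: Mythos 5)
Your proof is correct, and it simply spells out the "simple computations" that the paper declines to write down. The verification of (i) via $\gamma\bigl(\tfrac{(\gamma-1)(x)}{(q-1)\la}\bigr)=\tfrac{\gamma^2(x)-\gamma(x)}{q(q-1)\la}$, the reduction of (ii) to (i) plus $\gamma=1+(q-1)\la d_q$, and the expansion of $\gamma^2$ using $\gamma\circ(\la\,\cdot)=(q\la\,\cdot)\circ\gamma$ are all accurate and are the natural route the paper implicitly has in mind.
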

	
	\begin{proof}
		We can verify the equalities by simple computations.
	\end{proof}
	
	\begin{proof}[Proof of Theorem~\ref{qhyp_solution}]
		First, we prove that $L[F]=0$.
		Put $y=\sum_{n=0}^{\infty} c_n \la^n \in Q[[\la]]$ ($c_n \in Q$). Then, the coefficient of $\la^n$ in $-\q{\tfrac{1}{2}}^2 y$ , in $(1-\alpha \la)d_q y$, and in $q\la (1-q\la) d_q^2 y$ are $-\q{\tfrac{1}{2}}^2 c_n$, $\q{n+1} c_{n+1} - \alpha \q{n} c_n $, and $q \q{n+1} \q{n} c_{n+1} -q^2 \q{n} \q{n-1} c_n $ respectively.
		By adding all of them, we see that the coefficient of $\la^n$ in $L[y]$ is $\q{n+1}^2 c_{n+1} - \q{n+\tfrac{1}{2}}^2 c_n$.
		Since $\left\lbrace a_n \right\rbrace _{n\geq 0}$ has the property $\q{n+1}^2 a_{n+1} = \q{n+\tfrac{1}{2}}^2 a_n$ for $n \geq 0$, we obtain $L[F]=0$.
		
		For a non-negative integer $r$, we put $F_{\geq r}=\sum_{n=r}^{\infty} a_n \la^n.$
		Then by the same calculation as $L[F]$, we have $L[F_{\geq r}]=\q{r}^2 a_r \la^{r-1}.$
		To prove $L[H]=0$, we calculate $L[F\log_q \la - F\log_q (1- \la)]$ by using the following two claims.
		
		\begin{claim*}
			\begin{equation}
			L[F\log_q \la]=-2 \q{\tfrac{1}{2}}F + 2(1-q^\frac{1}{2}\la ) d_q F .
			\label{claimLF1}
			\end{equation}
		\end{claim*}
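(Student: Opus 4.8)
The plan is to expand $L[F\log_q\la]$ via the $q$-Leibniz rule (Proposition~\ref{q-Leibniz}) and then reduce everything using the relation $L[F]=0$ just established. First I would record the elementary facts $\gamma(\log_q\la)=q-1+\log_q\la$ and $d_q(\log_q\la)=\la^{-1}$ from the text above, together with $\gamma(\la^{-1})=q^{-1}\la^{-1}$ and $d_q(\la^{-1})=\q{-1}\la^{-2}=-q^{-1}\la^{-2}$, all immediate from $d_q=\frac{\gamma-1}{(q-1)\la}$ extended to $\Qlalog$. Two applications of the $q$-Leibniz rule then give
\begin{align*}
d_q(F\log_q\la)&=(d_qF)(q-1+\log_q\la)+F\la^{-1},\\
d_q^2(F\log_q\la)&=(d_q^2F)\bigl(2(q-1)+\log_q\la\bigr)+(1+q^{-1})(d_qF)\la^{-1}-q^{-1}F\la^{-2}.
\end{align*}

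Next I would substitute these into $L[f]=q\la(1-q\la)d_q^2f+(1-\alpha\la)d_qf-\q{\tfrac{1}{2}}^2f$ (recall $\alpha=1+\q{2}-2\q{\tfrac{1}{2}}$) and split the outcome into the part carrying a factor $\log_q\la$ and the part free of it. The coefficient of $\log_q\la$ is exactly $q\la(1-q\la)d_q^2F+(1-\alpha\la)d_qF-\q{\tfrac{1}{2}}^2F=L[F]$, which vanishes by the first part of the theorem proved above; hence $L[F\log_q\la]$ equals the $\log_q\la$-free remainder. Collecting that remainder, in which the contributions proportional to $F$ combine into $(q-\alpha)F$, yields
\[
L[F\log_q\la]=2(q-1)q\la(1-q\la)d_q^2F+\bigl((q+1)(1-q\la)+(q-1)(1-\alpha\la)\bigr)d_qF+(q-\alpha)F.
\]

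Finally I would eliminate the second-order term by substituting $q\la(1-q\la)d_q^2F=\q{\tfrac{1}{2}}^2F-(1-\alpha\la)d_qF$, i.e.\ the rearrangement of $L[F]=0$, which leaves $2(q-1)\q{\tfrac{1}{2}}^2F+\bigl((q+1)(1-q\la)-(q-1)(1-\alpha\la)\bigr)d_qF+(q-\alpha)F$. It then remains only to check the two scalar identities
\[
(q+1)(1-q\la)-(q-1)(1-\alpha\la)=2(1-q^{\frac12}\la),\qquad 2(q-1)\q{\tfrac{1}{2}}^2+(q-\alpha)=-2\q{\tfrac{1}{2}},
\]
both routine consequences of $\q{2}=1+q$, $\alpha=2+q-2\q{\tfrac{1}{2}}$, $(q-1)\q{\tfrac{1}{2}}=q^{\frac12}-1$, and $\q{\tfrac{1}{2}}(q^{\frac12}+1)=1$ (for the first, compare the constant terms and the $\la$-coefficients on both sides; for the second, rewrite it as $\q{\tfrac{1}{2}}\bigl((q-1)\q{\tfrac{1}{2}}+2\bigr)=1$). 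This gives $L[F\log_q\la]=-2\q{\tfrac{1}{2}}F+2(1-q^{\frac12}\la)d_qF$, as claimed. There is no serious obstacle here; the point requiring care is the bookkeeping in the first step, since $d_q$ does not commute with multiplication and $\gamma$ shifts $\log_q\la$ by $q-1$, while the scalar simplifications at the end are immediate once $\q{\tfrac{1}{2}}$ is recognized as $(q^{\frac12}+1)^{-1}$.
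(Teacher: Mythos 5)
Your proposal is correct and follows essentially the same line as the paper: compute $d_q(F\log_q\la)$ and $d_q^2(F\log_q\la)$ via the $q$-Leibniz rule, substitute into $L$, observe the coefficient of $\log_q\la$ is $L[F]=0$, reuse $L[F]=0$ to eliminate the $d_q^2F$ term, and simplify the coefficients. The only cosmetic difference is that the paper packages the second-derivative computation through the operator identities of Lemma~\ref{lem:gd} (expressing $d_q^2(F\log_q\la)$ via $\gamma d_q+d_q\gamma$ and $\gamma^2$) rather than applying the Leibniz rule twice directly, but both routes produce identical intermediate expressions.
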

		First, we calculate $d_q (F \log_q\la )$ and $d_q^2 (F \log_q\la )$.
		\[
		d_q (F \log_q\la )
		= d_q F \log_q\la+\frac{1}{\la} F+(q-1) d_q F,
		\]
		\begin{align*}
		d^2_q (F \log_q\la ) & = d^2_q F \log_q\la+ (\gamma d_q +d_q\gamma ) F\frac{1}{\la} -\gamma^2(F)\frac{1}{q\la^2} \\
		& = d^2_q F \log_q\la -\frac{1}{q\la^2}F +\frac{q+1}{q\la} d_q F +2(q-1)d^2_q F.
		\end{align*}
		Thus,
		\begin{align*}
		L[F\log_q\la]
		& =q\la(1-q\la)d_q^2(F \log_q\la )+(1-\alpha \la)d_q (F \log_q\la )-\q{\tfrac{1}{2}}^2 F\log_q\la \\
		& = q\la(1-q\la) \left(-\frac{1}{q\la^2}F +\frac{q+1}{q\la} d_q F +2(q-1)d^2_q F\right) \\
		& \quad +(1-\alpha \la) \left( \frac{1}{\la} F+(q-1) d_q F \right) \qquad (\text{by }L[F]=0) \\
		& =\left( q-\alpha +2(q-1)\q{\tfrac{1}{2}}^2 \right) F+\left( 2+(-q^2-q+\alpha (q-1))\la \right) d_qF \\
		& =-2 \q{\tfrac{1}{2}}F + 2(1-q^\frac{1}{2}\la ) d_q F .
		\end{align*}
		
		\begin{claim*}
			\begin{equation}
			L[F\log_q (1-\la)]=-\frac{2q \q{\tfrac{1}{2}} \la -1}{q\la-1}F-2q\la \frac{q^\frac{1}{2}\la-1}{q \la -1}d_qF.
			\label{claimLF2}
			\end{equation}
		\end{claim*}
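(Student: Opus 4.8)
The plan is to mirror, step by step, the computation that established Claim~\eqref{claimLF1}, with $\log_q\la$ replaced by $g\coloneqq\log_q(1-\la)$. First I would record the data involving $g$ alone: from $d_q g=\frac{1}{\la-1}$ and $\gamma=1+(q-1)\la d_q$ one gets $\gamma(g)=g+\frac{(q-1)\la}{\la-1}$, while applying the substitution $\la\mapsto q\la$ to the rational functions that occur gives $d_q\bigl(\frac{1}{\la-1}\bigr)=\frac{-1}{(q\la-1)(\la-1)}$ and $d_q\bigl(\frac{(q-1)\la}{\la-1}\bigr)=\frac{-(q-1)}{(q\la-1)(\la-1)}$; these can be checked directly or read off the expansion $g=-\sum_{n\ge1}\la^n/\q n$ together with Lemma~\ref{lem:gd}.

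Next I would expand $d_q(Fg)$ and $d_q^2(Fg)$ by the $q$-Leibniz rule (Proposition~\ref{q-Leibniz}), using $\gamma(F)=F+(q-1)\la\,d_qF$ and $\gamma(d_qF)=d_qF+(q-1)\la\,d_q^2F$ to remove all occurrences of $\gamma$. This produces
\[
d_q(Fg)=g\,d_qF+\frac{(q-1)\la}{\la-1}\,d_qF+\frac{1}{\la-1}\,F,
\]
and an analogous identity $d_q^2(Fg)=g\,d_q^2F+C_2\,d_q^2F+C_1\,d_qF+C_0\,F$, where $C_2,C_1,C_0$ are explicit rational functions of $q$ and $\la$ whose denominators divide $(q\la-1)(\la-1)$. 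Substituting these into $L[Fg]=q\la(1-q\la)d_q^2(Fg)+(1-\alpha\la)d_q(Fg)-\q{\tfrac12}^2Fg$, the terms carrying the factor $g$ assemble into $L[F]\cdot g$, which vanishes since $L[F]=0$; the remaining part is a combination of $F$, $d_qF$ and $d_q^2F$ with coefficients rational in $q$ and $\la$. As in the proof of Claim~\eqref{claimLF1}, I would then use $L[F]=0$ in the form $q\la(1-q\la)d_q^2F=\q{\tfrac12}^2F-(1-\alpha\la)d_qF$ to eliminate the $d_q^2F$-term, leaving $L[Fg]$ as an expression $P\cdot F+P'\cdot d_qF$ with $P,P'$ rational in $q$ and $\la$.

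The last step, which I expect to be the only genuine obstacle, is the bookkeeping: clearing the common denominator $(q\la-1)(\la-1)$ and simplifying the two numerators --- polynomials in $q$ and $\la$ --- using $\alpha=1+\q2-2\q{\tfrac12}$ and the elementary identities of Proposition~\ref{prop:qanalogue} (in particular relations among $\q{\tfrac12}$, $\q2$ and $q^{\frac12}$), so as to identify $P=-\dfrac{2q\q{\tfrac12}\la-1}{q\la-1}$ and $P'=-2q\la\,\dfrac{q^{\frac12}\la-1}{q\la-1}$. Two checks guard against sign errors: specialising at $q=1$ must recover the classical identity $L[F\log(1-\la)]=-F-2\la\,\diff F$ for the hypergeometric operator, and, once this Claim and Claim~\eqref{claimLF1} are available, the formula $L[F_{\ge r}]=\q r^2a_r\la^{r-1}$ for the tail series $F_{\ge r}=\sum_{n\ge r}a_n\la^n$ lets one verify $L[H]=0$ directly at the level of $\la^n$-coefficients, which independently pins down the right-hand side above.
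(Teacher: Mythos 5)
Your proposal follows exactly the same route as the paper: expand $d_q(Fg)$ and $d_q^2(Fg)$ by the $q$-Leibniz rule (your formula for $d_q(Fg)$ and your $C_0,C_1,C_2$ match the paper's), drop the $g$-terms since they assemble into $L[F]\cdot g=0$, eliminate $d_q^2F$ via $q\la(1-q\la)d_q^2F=\q{\tfrac12}^2F-(1-\alpha\la)d_qF$, and simplify the rational coefficients of $F$ and $d_qF$ over the common denominator $(\la-1)(q\la-1)$. The only substantive difference is that you leave the final polynomial bookkeeping undone (while proposing the $q=1$ specialisation and the coefficient identity $L[F_{\ge r}]=\q r^2a_r\la^{r-1}$ as consistency checks), which is exactly the part the paper actually carries out to land on the stated right-hand side.
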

		In the same way as above, we obtain
		\[
		d_q (F \log_q(1-\la) )= d_q F \log_q(1-\la)+\frac{1}{\la-1} F+\frac{(q-1)\la}{\la-1} d_q F,
		\]
		\[
		d^2_q (F \log_q(1-\la) )=d^2_q F \log_q(1-\la) -\frac{1}{(\la-1)(q\la-1) }F +\frac{q+1}{q\la-1}d_qF +\frac{(q-1)\la(2q\la-q-1)}{(\la-1)(q\la-1)}d_q^2F.
		\]
		Thus,
		\begin{align*}
		& \quad L[F\log_q(1-\la)] \\
		& = q\la(1-q\la) \Bigg( -\frac{1}{(\la-1)(q\la-1) }F +\frac{q+1}{q\la-1}d_qF +\frac{(q-1)\la(2q\la-q-1)}{(\la-1)(q\la-1)}d_q^2F \Bigg) \\
		& \quad +(1-\alpha \la) \left( \frac{1}{\la-1} F+\frac{(q-1)\la}{\la-1} d_q F \right) \qquad (\text{by }L[F]=0) \\
		& =\frac{(1+(q-\alpha)\la)(q\la-1)+\q{\frac{1}{2}}^2 (q-1)\la (2q\la-q-1)}{(\la-1)(q\la-1)}F \\
		& \quad + \frac{((1-\alpha \la)(q-1)\la-q(q+1)\la(\la-1))(q\la-1)-(1-\alpha \la)(q-1)\la(2q\la-q-1) }{(\la-1)(q\la-1)}d_qF. \\
		& = -\frac{2q \q{\tfrac{1}{2}} \la -1}{q\la-1}F-2q\la \frac{q^\frac{1}{2}\la-1}{q \la -1}d_qF.
		\end{align*}
		Now the proofs of the two claims are completed. By claims~\eqref{claimLF1} and~\eqref{claimLF2},
		\begin{align}
		& \quad L[F\log_q\la - F\log_q(1-\la)] \notag \\
		& = 2(1-q^\frac{1}{2}\la ) d_q F -2 \q{\tfrac{1}{2}}F -
		\left( -2q\la \frac{q^\frac{1}{2}\la-1}{q \la -1}d_qF-\frac{2q \q{\tfrac{1}{2}} \la -1}{q\la-1}F \right) \notag \\
		& = 2\frac{q^\frac{1}{2}\la-1}{q \la -1}d_qF+\frac{2\q{\tfrac{1}{2}}-1}{q\la-1}F.
		\label{eq:LFlog}
		\end{align}
		We describe~\eqref{eq:LFlog} as an $Q$-linear combination of $L[F_{\geq n+1}]$ for $n$ (and $d_qF$).
		We start by writing $\frac{1}{1-q\la}F$ and $\frac{q\la}{1-q\la}d_qF$ as $Q$-linear combinations of $L[F_{\geq n+1}]$.
		\[
		\frac{1}{1-q\la}F =\left( \sum_{n=0}^{\infty} a_n \lambda^n \right)
		\left( \sum_{m=0}^{\infty} q^m \lambda^m \right)
		= \sum_{n=0}^{\infty} \la^n \left( \sum_{m=0}^n a_m q^{n-m} \right)
		= \sum_{n=0}^{\infty} \frac{L[F_{\geq n+1}]}{\q{n+1}^2 a_{n+1} } \left( \sum_{m=0}^n a_m q^{n-m} \right).
		\]
		\[
		\frac{q\la}{1-q\la} d_qF =\left( \sum_{n=1}^{\infty} \q{n} a_n \lambda^{n-1} \right)
		\left( \sum_{m=1}^{\infty} q^m \lambda^m \right)
		= \sum_{n=0}^{\infty} \frac{L[F_{\geq n+1}]}{\q{n+1}^2 a_{n+1} } \left( \sum_{m=0}^{n} \q{m} a_m q^{n+1-m} \right).
		\]
		Thus,
		\begin{align*}
		& \quad 2\frac{q^\frac{1}{2}\la-1}{q \la -1}d_qF+\frac{2\q{\tfrac{1}{2}}-1}{q\la-1}F \\
		& =2d_qF+2\frac{q-q^{\frac{1}{2}}}{q} \frac{q\la}{1- q \la }d_qF-\frac{2\q{\tfrac{1}{2}}-1}{1-q\la}F \\
		& =2d_qF+\sum_{n=0}^{\infty} \frac{L[F_{\geq n+1}]}{\q{n+1}^2 a_{n+1} } \sum_{m=0}^{n} \left(2\frac{q-q^{\frac{1}{2}}}{q}\q{m} a_m q^{n+1-m} - (2\q{\tfrac{1}{2}}-1)a_m q^{n-m} \right) \\
		& =2d_qF+\sum_{n=0}^{\infty} \frac{L[F_{\geq n+1}]}{\q{n+1}^2 a_{n+1} } \sum_{m=0}^{n} \left(\left(2(q-q^{\frac{1}{2}})\q{m} - (2\q{\tfrac{1}{2}}-1)\right)a_m q^{n-m} \right) \\
		& =2d_qF+(q-1)\sum_{n=0}^{\infty} L[F_{\geq n+1}]
		\quad (\text{by Lemma}~\ref{aglem} \text{ below}) \\
		& =2\sum_{n=0}^{\infty} \frac{1}{\q{n+1}} L[F_{\geq n+1}]
		+(q-1) \sum_{n=0}^{\infty} L[F_{\geq n+1}] \\
		& = L\left[ \sum_{r=0}^{\infty} a_r \la^r \sum_{n=1}^{r} \left(\frac{2}{\q{n}} +q-1\right)\right].
		\end{align*}
		Hence we obtain
		$
		L\left[F\log_q\la - F\log_q(1-\la)\right] - L\left[ \sum^{\infty}_{n=1}a_n \lambda^n \sum^{n}_{i=1} \left(\frac{2}{[i]_q} +q-1\right)\right]=0.
		$
		
	\end{proof}
	
	\begin{lemm}
		\label{aglem}
		For all $n \in \N$,
		\begin{equation}
		\sum_{m=0}^{n} \left(2(q-q^{\frac{1}{2}})\q{m} - (2\q{\tfrac{1}{2}}-1)\right)a_m q^{n-m}
		=(q-1) \q{n+1}^2 a_{n+1}. \label{eq:aglem}
		\end{equation}
	\end{lemm}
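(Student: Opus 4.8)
The plan is to reduce~\eqref{eq:aglem} to a single identity among $q$-numbers and then conclude by a one-line induction. Abbreviate the summand by $c_m \coloneqq \bigl(2(q-q^{\frac{1}{2}})\q{m} - (2\q{\tfrac{1}{2}}-1)\bigr)a_m$, so that the left-hand side of~\eqref{eq:aglem} is $S_n \coloneqq \sum_{m=0}^{n} c_m q^{n-m}$; splitting off the top term $m=n$ gives $S_n = qS_{n-1}+c_n$ for $n\geq 1$, with $S_0 = c_0$. Write $T_n \coloneqq (q-1)\q{n+1}^2 a_{n+1}$ for the right-hand side. The relation $\q{n+1}^2 a_{n+1} = \q{n+\tfrac12}^2 a_n$, immediate from $a_{n+1} = \bigl(\q{n+\tfrac12}/\q{n+1}\bigr)^2 a_n$, gives $T_n = (q-1)\q{n+\tfrac12}^2 a_n$ (and $T_0 = (q-1)\q{\tfrac12}^2 a_0$), while $T_{n-1} = (q-1)\q{n}^2 a_n$ holds by the definition of $T_{n-1}$, so $T_n - qT_{n-1} = (q-1)a_n\bigl(\q{n+\tfrac12}^2 - q\q{n}^2\bigr)$. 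Since $a_n \neq 0$, both $S_0 = T_0$ and the equality $S_n - qS_{n-1} = T_n - qT_{n-1}$ for $n\geq 1$ follow once we prove
\[
(q-1)\q{n+\tfrac12}^2 \;=\; q(q-1)\q{n}^2 + 2(q-q^{\frac{1}{2}})\q{n} - (2\q{\tfrac{1}{2}}-1)
\qquad (n\geq 0),
\]
where the case $n=0$ uses $\q{0}=0$. Granting this, both $S_n$ and $T_n$ satisfy $X_n = qX_{n-1}+c_n$ ($n\geq 1$) with the same value $X_0=c_0$, hence $S_n = T_n$ for all $n$, which is~\eqref{eq:aglem}.

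To verify the displayed identity I would substitute $\q{k} = \tfrac{q^k-1}{q-1}$ throughout and multiply by $q-1$. Writing $u\coloneqq q^{\frac12}$ (so $q=u^2$ and $q-1=(u-1)(u+1)$), one has $q-q^{\frac12} = u(u-1)$ and $-(2\q{\tfrac12}-1) = \tfrac{(u-1)^2}{q-1}$, so the right-hand side equals
\[
\frac{1}{q-1}\Bigl(u^2(q^n-1)^2 + 2u(u-1)(q^n-1) + (u-1)^2\Bigr)
= \frac{\bigl(u(q^n-1)+(u-1)\bigr)^2}{q-1}
= \frac{(q^{n+\frac12}-1)^2}{q-1},
\]
which is exactly $(q-1)\q{n+\tfrac12}^2$, the left-hand side. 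This completes the induction and proves the lemma.

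The only step that is not pure bookkeeping is recognizing the perfect-square factorization $u^2(q^n-1)^2 + 2u(u-1)(q^n-1)+(u-1)^2 = \bigl(u(q^n-1)+(u-1)\bigr)^2 = (q^{n+\frac12}-1)^2$ in the last display; this can instead be done by expanding both sides of the displayed identity as polynomials in $u=q^{\frac12}$ and matching the finitely many coefficients. The reduction to a recurrence via $\q{n+1}^2 a_{n+1} = \q{n+\tfrac12}^2 a_n$ is the same manipulation already used in the proof of Theorem~\ref{qhyp_solution}.
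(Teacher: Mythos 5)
Your proposal is correct and follows essentially the same route as the paper: both proceed by induction on $n$, reducing the step to the single $q$-number identity $(q-1)\bigl(\q{n+\tfrac12}^2 - q\q{n}^2\bigr) = 2(q-q^{\frac12})\q{n} - (2\q{\tfrac12}-1)$ (the paper states it with $\q{n+1}^2 a_{n+1}$ in place of $\q{n+\tfrac12}^2 a_n$ and asserts it can be verified by simple computation). The only added value in your write-up is that you make the verification explicit, spotting the perfect-square factorization $\bigl(u(q^n-1)+(u-1)\bigr)^2=(q^{n+\frac12}-1)^2$ with $u=q^{\frac12}$, which is a clean way to carry out the computation the paper leaves to the reader.
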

	
	\begin{proof}
		We prove this by induction on $n$.
		If $n=0$, the left-hand side of~\eqref{eq:aglem} is
		$
		-(2\q{\tfrac{1}{2}}-1)=(q-1)\q{\tfrac{1}{2}}^2.
		$
		So it is equal to the right-hand side of~\eqref{eq:aglem}.
		We assume~\eqref{eq:aglem} for $n$. To prove~\eqref{eq:aglem} for $n+1$, it suffices to show
		\begin{equation}
		\label{aglem:induction}
		\left(2(q-q^{\frac{1}{2}})\q{n} -2\q{\tfrac{1}{2}}+1\right)a_n
		=(q-1)\left(\q{n+1}^2a_{n+1}-q\q{n}^2 a_n\right).
		\end{equation}
		We can verify the equation~\eqref{aglem:induction} by simple computations.
	\end{proof}
	
	\begin{rem}
		We have another description of $H$ :
		\[
		H = F \log_q \la +\sum_{n=1}^{\infty} a_n \la^n \left( \sum_{i=1}^{n} \frac{2}{\q{i-\frac{1}{2}}} - \frac{2}{\q{i}} \right).
		\]
		One can show that the right-hand side is annihilated by $L$ in the same way as the proof of $L[H]=0$ in Theorem~\ref{qhyp_solution}.
	\end{rem}
	
	In the rest of this section, we calculate a $q$-analogue of Wronskian
	$W(F,H)=Fd_qH-Hd_qF$.
	in the preparation for the computation in~\S\ref{proofsec}.
	\begin{lemm}
		\label{Wronskian}
		We have
		$
		Fd_qH-Hd_qF=\frac{1}{\la(1-\la)}.
		$
	\end{lemm}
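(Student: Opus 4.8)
The plan is to prove a $q$-analogue of Abel's identity for the $q$-Wronskian $W=Fd_qH-Hd_qF\in\Qlalog$, and then to pin down the resulting rank-one solution by a residue computation at $\la=0$. Throughout, write $L[f]=P_2d_q^2f+P_1d_qf+P_0f$ with $P_2=q\la(1-q\la)$, $P_1=1-\alpha\la$, and $P_0=-\q{\tfrac12}^2$; since $P_2$ is a unit of $Q((\la))$, the relations $L[F]=L[H]=0$ from Theorem~\ref{qhyp_solution} give $d_q^2F=-P_2^{-1}(P_1d_qF+P_0F)$ and the analogous formula for $H$.

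The first step is to show $d_qW=\dfrac{(q+1)\la-1}{q\la(1-q\la)}\,W$. Using the $q$-Leibniz rule (Proposition~\ref{q-Leibniz}) and $\gamma=1+(q-1)\la d_q$, one computes
\[
d_qW=(q-1)\la\bigl(d_qF\cdot d_q^2H-d_qH\cdot d_q^2F\bigr)+\bigl(Fd_q^2H-Hd_q^2F\bigr).
\]
Substituting the expressions for $d_q^2F$ and $d_q^2H$, the first parenthesis equals $\tfrac{P_0}{P_2}W$ and the second equals $-\tfrac{P_1}{P_2}W$, so $d_qW=\tfrac{(q-1)\la P_0-P_1}{P_2}W$. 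Finally one checks the $q$-number identity $\alpha-(q-1)\q{\tfrac12}^2=q+1$, which turns $(q-1)\la P_0-P_1$ into $(q+1)\la-1$; this comes from clearing denominators using $\q{\tfrac12}=\tfrac{q^{1/2}-1}{q-1}$ and $\q2=q+1$, and here the apparent $q^{1/2}$ cancels.

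Next I pass to the multiplicative form. Applying $d_q=\tfrac{\gamma-1}{(q-1)\la}$ and simplifying, the identity above is equivalent to $\gamma(W)=\tfrac{1-\la}{q(1-q\la)}W$, hence to $\gamma(\la W)=\tfrac{1-\la}{1-q\la}(\la W)$. On the other hand, using the formulas for $d_q(F\log_q\la)$ and $d_q(F\log_q(1-\la))$ already obtained in the proof of Theorem~\ref{qhyp_solution}, the $\log_q$-terms in $W$ cancel and one gets
\[
W=\frac{F^2}{\la(1-\la)}+\frac{q-1}{1-\la}Fd_qF-Fd_qG+Gd_qF,\qquad G=\sum_{n\ge1}a_n\la^n\sum_{i=1}^n\Bigl(\tfrac{2}{\q{i}}+q-1\Bigr).
\]
Since $F$ has constant term $1$ and $G\in\la Q[[\la]]$ while $d_qF,d_qG\in Q[[\la]]$, this shows $\la W\in Q[[\la]]$ with constant term $1$ (the residue of $W$ at $\la=0$).

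The last step is a uniqueness argument: an element $g=\sum_{n\ge0}g_n\la^n\in Q[[\la]]$ satisfying $\gamma(g)=\tfrac{1-\la}{1-q\la}g$ is determined by $g_0$, since comparing coefficients of $\la^n$ gives $(q^n-1)g_n=(\text{a linear expression in }g_0,\dots,g_{n-1})$ and $q^n-1=(q-1)\q{n}$ is a nonzero element of $Q$ for $n\ge1$. As $\tfrac{1}{1-\la}=\la\cdot\tfrac{1}{\la(1-\la)}$ also satisfies $\gamma(g)=\tfrac{1-\la}{1-q\la}g$ and has constant term $1$, we conclude $\la W=\tfrac{1}{1-\la}$, that is $W=\tfrac{1}{\la(1-\la)}$. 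The main obstacle I expect is the first step — the careful bookkeeping with the non-commutation of $d_q$ and $\gamma$ in the $q$-Leibniz computation, and the verification of $\alpha-(q-1)\q{\tfrac12}^2=q+1$ — while the remaining steps are routine manipulations with formal power series.
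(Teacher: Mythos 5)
Your proof is correct and the overall strategy coincides with the paper's (derive a first-order $q$-difference equation for the Wronskian, check it lies in a nice ring, pin down the constant), but the two computations differ in interesting ways. The paper first rewrites the $q$-hypergeometric operator in self-adjoint form, $d_q(\la(1-\la)d_qF)=\q{\tfrac12}^2\gamma(F)$, and uses $q$-Leibniz to conclude immediately that $d_q\bigl(\la(1-\la)W\bigr)=0$; then $\la(1-\la)W\in Q[[\la]]^{d_q=0}=Q$, and evaluation at $\la=0$ (using $d_q(F\log_q\la)=\gamma(F)\tfrac1\la+d_qF\log_q\la$) gives the constant $1$. You instead derive a $q$-Abel identity $d_qW=\tfrac{(q-1)\la P_0-P_1}{P_2}W$, verify the scalar identity $\alpha-(q-1)\q{\tfrac12}^2=q+1$ so the coefficient simplifies, translate into $\gamma(\la W)=\tfrac{1-\la}{1-q\la}\la W$, compute $\la W\in Q[[\la]]$ with constant term $1$ explicitly from the pieces of $H$, and then observe that a power series solution of this $\gamma$-eigenvalue equation is determined by its constant term. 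The two first-order functional equations are actually identical (yours $\gamma(W)=\tfrac{1-\la}{q(1-q\la)}W$ is exactly $d_q(\la(1-\la)W)=0$ unwound), so the routes converge; what your Abel-type derivation buys is generality (it works for any operator $P_2d_q^2+P_1d_q+P_0$ without putting it in self-adjoint form), while the paper's self-adjoint computation is shorter here and lets it invoke $Q[[\la]]^{d_q=0}=Q$ rather than a coefficient-recursion uniqueness argument. Both are complete and correct; the one small thing worth making explicit in your write-up is the second $q$-Leibniz form $d_q(xy)=\gamma(x)d_qy+d_q(x)y$ used in passing, which is how both you and the paper extract the cross terms cleanly.
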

	
	\begin{proof}
		We have
		\begin{align*}
		d_q(\la(1-\la)d_qF)
		& =(1-\q{2}\la)d_qF+\gamma(\la(1-\la))d^2_q F \\
		& =(1-\q{2}\la)d_qF+q\la(1-q\la)d^2_q F \\
		& =(1-\q{2}\la)d_qF-(1-\alpha \la)d_qF +\q{\tfrac{1}{2}}^2 F \\
		& =\q{\tfrac{1}{2}}^2 \gamma(F).
		\end{align*}
		Similarly,
		$
		d_q(\la(1-\la)d_qH)=\q{\tfrac{1}{2}}^2 \gamma(H).
		$
		Thus,
		\begin{align*}
		d_q(\la(1-\la)(Fd_qH-Hd_qF))
		& =d_qF \cdot \la(1-\la)d_qH +\gamma(F) \cdot d_q(\la(1-\la)d_qH) \\
		& \quad -d_qH \cdot \la(1-\la)d_qF -\gamma(H) \cdot d_q(\la(1-\la)d_qF) \\
		& =\gamma(F) \cdot d_q(\la(1-\la)d_qH) -\gamma(H) \cdot d_q(\la(1-\la)d_qF) \\
		& =0.
		\end{align*}
		Since $\la(1-\la)(Fd_qH-Hd_qF) \in Q[[\la]]$ and ${Q[[\la]]}^{d_q=0}=Q$, we see that $\la(1-\la)(Fd_qH-Hd_qF)$ is constant.
		For all $g \in Q[[\la]]$, we have $\la g=0$, $\la d_q(g)=0$	when $\la=0$.
		Therefore, we have
		\begin{align*}
		\la(1-\la)(Fd_qH-Hd_qF) \mid_{\la=0}
		& = \la(1-\la)(Fd_q(F\log_q \la)-Fd_qF \log_q \la ) \mid_{\la=0} \\
		& = \la(1-\la)(Fd_qF \log_q \la+F\gamma(F)\tfrac{1}{\la}-Fd_qF \log_q \la ) \mid_{\la=0} \\
		& = (1-\la)F\gamma(F) \mid_{\la=0} \\
		& =1.
		\end{align*}
	\end{proof}
	
	\section{\texorpdfstring{$q$}{q}-connection and \texorpdfstring{$q$}{q}-hypergeometric differential equation}
	\label{DFandConn}
	Let	$R'$ be one of the rings $R[[\la]]$, $R\langle\la\rangle$, and $R\left\langle\la, \frac{1}{g(\la)}\right\rangle$ $(g(\la)\in R[\la]\backslash(p,q-1)R[\la])$.
	Let $M''$ be the free $R'$ module $R' e_1\oplus R' e_2$ of rank $2$. 
	In this section, we determine a $q$-connection $\nabla_q \colon M'' \to M'' \otimes_{R'} \frac{1}{1-\la} q\Omega_{R'/R}$ (Remark~\ref{rem:q-connection}) which satisfies
	\begin{equation}
	\nabla_q \left(\begin{pmatrix} e_1 & e_2 \end{pmatrix} \begin{pmatrix} f_1 \\ f_2 \end{pmatrix} \right)=0 \Leftrightarrow
	d_q(f_2)=f_1\; \text{and} \; L[f_2]=0, \label{nabla0condition}
	\end{equation}
	which is a $q$-analogue of~\cite[Proposition 7.11 (iii)]{Put} (see the claim before~\eqref{hyp-geom-eq}). Here $f_1$ and $f_2$ are elements of any extension $C$ of $\text{Frac}R'$ which is $(q-1)$-torsion free and carries an extension of $\Gamma$-action satisfying $(\gamma-1)C\subset (q-1)C$.
	Let $P \in \frac{1}{\la(1-\la)}M_2(R')$ and define a $q$-connection $\nabla_q \colon M'' \to M''\otimes_{R'}\frac{1}{1-\la} q\Omega_{R'/R} $ by $ \nabla_q \begin{pmatrix} e_1 & e_2 \end{pmatrix} = \begin{pmatrix} e_1 & e_2 \end{pmatrix} P \otimes d\la$.
	Then,
	\begin{align}
	\nabla_q \left(\begin{pmatrix} e_1 & e_2 \end{pmatrix} \begin{pmatrix} f_1 \\ f_2 \end{pmatrix} \right)=0 
	& \Leftrightarrow \begin{pmatrix} e_1 & e_2 \end{pmatrix} P \gamma \begin{pmatrix} f_1 \\ f_2 \end{pmatrix}
	+ \begin{pmatrix} e_1 & e_2 \end{pmatrix} d_q \begin{pmatrix} f_1 \\ f_2 \end{pmatrix} = 0
	\notag \\
	& \Leftrightarrow (1+(q-1)\la P) \begin{pmatrix} \gamma(f_1) \\ \gamma(f_2) \end{pmatrix}
	= \begin{pmatrix} f_1 \\ f_2 \end{pmatrix}. \label{eq:connP}
	\end{align}
	We define $P'\in M_2(\text{Frac} R')$ by $1+(q-1)\la P=(1+(q-1)\la P')^{-1} $.
	Then the equation~\eqref{eq:connP} is equivalent to $d_q \begin{pmatrix} f_1 \\ f_2 \end{pmatrix}=P' \begin{pmatrix} f_1 \\ f_2 \end{pmatrix}$.
	Hence~\eqref{nabla0condition} holds when
	\[
	P'=\frac{1}{q\la(1-q\la)} \begin{pmatrix} -1+\alpha \la & \q{\tfrac{1}{2}}^2 \\
	q\la(1-q\la) & 0\end{pmatrix}.
	\]
	Therefore,
	\begin{align*}
	1+(q-1)\la P & =(1+(q-1)\la P')^{-1} \\
	& =q\la(1-q\la) \begin{pmatrix} q\la(1-q\la)- (q-1)\la(1-\alpha \la) & (q-1)\la \q{\tfrac{1}{2}}^2 \\ (q-1)\la q\la(1-q\la) & q\la(1-q\la)\end{pmatrix} ^{-1} \\
	& =\frac{1}{\la(1-\la)} \begin{pmatrix}q\la(1-q\la) & -(q-1)\la \q{\tfrac{1}{2}}^2\\ -q\la(1-q\la)(q-1)\la & q\la(1-q\la)- (q-1)\la(1-\alpha \la)\end{pmatrix}
	\end{align*}
	and we obtain
	\[
	P=\frac{1}{\la(1-\la)} \begin{pmatrix} 1-\q{2}\la & -\q{\tfrac{1}{2}}^2 \\ -q\la(1-q\la) & \q{\tfrac{1}{2}}^2 (q-1)\la\end{pmatrix} .
	\]
	
	Let $\overline{e}_1$ and $\overline{e}_2$ be the elements $\la(1-\la)\omega$ and $-\la(1-\la)D(\omega)$ of $H^1_{\text{dR}}$, respectively (see \S\ref{introduction}). Then, in the case $R'=R\left\langle \la,\frac{1}{\la(1-\la)h(\la)}\right\rangle$, the comparison with the formula~\eqref{GMExplicitFormula} shows that we have the following isomorphism of $B'$-modules compatible with the connections 
	\begin{equation} 
	(M'',\nabla_q)\otimes_{R'}B\xrightarrow{\cong}(H^1_{\text{dR}},\nabla); e_1\otimes 1,e_2\otimes 1 \mapsto \overline{e}_1,\overline{e}_2.
	\end{equation}
	
	Put $e_1'=\frac{1}{\la(1-\la)}e_1\in M''\left[\frac{1}{\la(1-\la)}\right]$, and let $M'$ be the free $R'$-submodule $R'e_1'\oplus R'e_2$ of $M''\left[\frac{1}{\la(1-\la)}\right]$. Then the $q$-connection $\nabla_q$ on $M''$ uniquely extends to a $q$-connection on $M''\left[\frac{1}{\la(1-\la)}\right]$, and by a straightforward computation, we see that its restriction to $M'$ gives the $q$-connection $\nabla_q\colon M'\to M'\otimes_{R'}\frac{1}{1-\la}q\Omega_{R'/R}$ on $M'$ defined by \[\nabla_q(e_1',e_2)=(e_1',e_2)\begin{pmatrix}0& -\q{\frac12}^2\\-\frac{1}{\la(1-\la)}& \q{\frac{1}{2}}^2 (q-1)\frac{1}{1-\la} \end{pmatrix}\otimes d\lambda.\]
	We define the filtration on $M'$ by 
	\begin{equation*}
	\Fil^r M'= \begin{cases}
	M' & r=0 \\
	R'e'_1 \oplus (q-1)R' e_2 & r=1.
	\end{cases}
	\end{equation*}
	Then, in the case $R'=R\left\langle \lambda,\frac{1}{h(\la)}\right\rangle$, we have an isomorphism of $B'$-modules with connection and filtration 
	\begin{equation}\label{qHGconnectionModq-1}
	(M',\nabla_q,\Fil^\bullet)\otimes_{R'}B'\xrightarrow{\cong}(H_{B'},\nabla,\Fil^\bullet);e_1',e_2\mapsto \overline{e}_1', \overline{e}_2.
	\end{equation}
	
	\section{\texorpdfstring{$q$}{q}-analogue of \texorpdfstring{$p$}{p}-adic formal congruence}
	\label{Dwsec}
	In this section, we prove some formal congruence between power series in $R[[\lambda]]$ and show that certain constructions give elements of a ring smaller than $\Rla$ by constructing $q$-analogues of some results of ~\cite[\S1-\S3]{Dw}.
	Put 
	\[
	C_\theta(n)=\prod^{n-1}_{\nu=0} \q{\theta+\nu}.
	\]
	Let $\theta \in \Z_p$ be neither zero nor negative rational integer.
	We define $\theta' \in \Q \cap \Z_p$ to be the unique number such that $p \theta'-\theta$ is an ordinary integer in $[0,p-1]$.
	For all $x \in \Q$ , we put
	\begin{equation*}
	\rho (x)=\begin{cases}
	0 & x\leq 0 \\
	1 & x>0.
	\end{cases}
	\end{equation*}
	
	\begin{lemm}~\cite[\S1, Lemma 1 (1,1)]{Dw}
		\label{Dw11}
		Let $\mu,s$ be positive integers. Let $a \in \N \cap [0,p-1]$.
		Then,
		\begin{multline*}
		\frac{C_\theta(a+\mu p+mp^{s+1})}{\varphi(C_{\theta'}(\mu +mp^s))}\equiv \frac{C_\theta(mp^{s+1})}{\varphi(C_{\theta'}(mp^s))} \frac{C_\theta(a+\mu p)}{\varphi(C_{\theta'}(\mu))} \left( \varphi \left(1+q^{\theta'+\mu} \frac{\q{mp^s}}{\q{\theta'+\mu}} \right)\right)^ {\rho(a+\theta-p\theta')}\\
		\mod 1+\q{p^{s+1}}R.
		\end{multline*}
	\end{lemm}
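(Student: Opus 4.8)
The plan is to reproduce, in the $(p,q-1)$-adic setting, Dwork's product computation in~\cite[\S1]{Dw}, the substantive inputs being the $q$-number identities of Proposition~\ref{prop:qanalogue} together with the divisibility $\q{mp^{s+1}}=\q{p^{s+1}}\varphi^{s+1}(\q{m})\in\q{p^{s+1}}R$. Put $c\coloneqq p\theta'-\theta\in\N\cap[0,p-1]$, so that $\theta+\nu\equiv0\bmod p$ exactly when $\nu\equiv c\bmod p$, in which case $\theta+\nu=p\bigl(\theta'+(\nu-c)/p\bigr)$ and hence $\q{\theta+\nu}=\q{p}\,\varphi\bigl(\q{\theta'+(\nu-c)/p}\bigr)$ by Proposition~\ref{prop:qanalogue}~(ii). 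Splitting the index $\nu=pk+j$ ($0\le j\le p-1$) in $C_\theta(Np)=\prod_{\nu=0}^{Np-1}\q{\theta+\nu}$ and pulling the $j=c$ factor out of each of the $N$ blocks gives, for every $N\ge0$,
\[
C_\theta(Np)=\q{p}^{\,N}\,\varphi\bigl(C_{\theta'}(N)\bigr)\,g(N),\qquad g(N)\coloneqq\prod_{k=0}^{N-1}\ \prod_{\substack{0\le j\le p-1\\ j\ne c}}\q{\theta+pk+j}\ \in\ R^{\times},
\]
$g(N)$ being a unit because $\q{\theta+j}\in R^{\times}$ for $j\ne c$; likewise $C_\theta(a+Np)=C_\theta(Np)\prod_{i=0}^{a-1}\q{\theta+Np+i}$, where the extra block of length $a$ contains the single non-unit factor $\q{\theta+Np+c}=\q{p}\varphi(\q{\theta'+N})$ precisely when $c\le a-1$, i.e.\ exactly when $\rho(a+\theta-p\theta')=1$.

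Next I would substitute this extraction into the asserted congruence: use it with $N=\mu+mp^s$ for the left-hand side and with $N=mp^s$ (and $a=0$) and $N=\mu$ for the two factors on the right. On each side the $\varphi(C_{\theta'})$-factor produced by the extraction cancels the $\varphi(C_{\theta'}(\cdot))$ in the denominator, leaving powers of $\q{p}$ times products of unit $q$-numbers; writing $\epsilon\coloneqq\rho(a+\theta-p\theta')$, the resulting powers of $\q{p}$ also match (both equal $\q{p}^{\,\mu+mp^s+\epsilon}$, the ``$+\epsilon$'' being the one non-unit $a$-block factor); and, using $\q{\theta'+\mu+mp^s}=\q{\theta'+\mu}+q^{\theta'+\mu}\q{mp^s}$ (Proposition~\ref{prop:qanalogue}~(i)), the identity $\varphi(\q{\theta'+\mu})\cdot\varphi\bigl(1+q^{\theta'+\mu}\tfrac{\q{mp^s}}{\q{\theta'+\mu}}\bigr)=\varphi(\q{\theta'+\mu+mp^s})$ shows that the left-hand non-unit $a$-factor $\q{p}\varphi(\q{\theta'+\mu+mp^s})$ (present iff $\epsilon=1$) is matched by its right-hand counterpart times the prescribed correction term. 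After these cancellations the congruence reduces to
\[
\frac{g(\mu+mp^s)}{g(\mu)\,g(mp^s)}\ \cdot\ \prod_{\substack{0\le i\le a-1\\ i\ne c}}\frac{\q{\theta+(\mu+mp^s)p+i}}{\q{\theta+\mu p+i}}\ \in\ 1+\q{p^{s+1}}R.
\]

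The second factor is harmless: since $(\mu+mp^s)p=\mu p+mp^{s+1}$, Proposition~\ref{prop:qanalogue}~(i) gives $\q{\theta+(\mu+mp^s)p+i}=\q{\theta+\mu p+i}+q^{\theta+\mu p+i}\q{mp^{s+1}}$, and as $i\ne c$ the denominator $\q{\theta+\mu p+i}$ is a unit while $\q{mp^{s+1}}\in\q{p^{s+1}}R$, so each factor, hence the product, lies in $1+\q{p^{s+1}}R$. For the first factor the decisive step is to telescope it in the right direction: from $g(\mu+mp^s)=g(mp^s)\prod_{k=mp^s}^{mp^s+\mu-1}\prod_{j\ne c}\q{\theta+pk+j}$, reindexing $k\mapsto k+mp^s$ yields $g(\mu+mp^s)/g(mp^s)=\prod_{k=0}^{\mu-1}\prod_{j\ne c}\q{\theta+mp^{s+1}+pk+j}$, and therefore
\[
\frac{g(\mu+mp^s)}{g(\mu)\,g(mp^s)}=\prod_{k=0}^{\mu-1}\ \prod_{\substack{0\le j\le p-1\\ j\ne c}}\frac{\q{\theta+pk+j}+q^{\theta+pk+j}\q{mp^{s+1}}}{\q{\theta+pk+j}}\ \in\ 1+\q{p^{s+1}}R,
\]
again because $j\ne c$ makes each denominator a unit and $\q{mp^{s+1}}\in\q{p^{s+1}}R$. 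This finishes the proof.

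The one place demanding care — the main ``obstacle'' — is this last telescoping. The index pattern first suggests writing $g(\mu+mp^s)/(g(\mu)g(mp^s))$ as a product over $k\in[0,mp^s)$, whose factors are $1+q^{\theta+pk+j}\q{p\mu}/\q{\theta+pk+j}$ and only lie in $1+\q{p}R$; since $\q{p}^dR\not\subseteq\q{p^{s+1}}R$ for $s\ge1$, wringing the precision $\q{p^{s+1}}$ out of that form would require a genuinely delicate cyclotomic-divisibility argument. Grouping instead over $k\in[0,\mu)$, as above, makes the estimate transparent, and the whole proof then rests only on the elementary identities $\q{A+B}=\q{A}+q^A\q{B}$, $\q{ap}=\q{p}\varphi(\q{a})$, and $\q{mp^{s+1}}=\q{p^{s+1}}\varphi^{s+1}(\q{m})$.
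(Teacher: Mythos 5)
Your proof is correct and follows the same route as Dwork's original argument, which the paper defers to. The paper's own proof is a brief sketch: it rewrites $C_\theta(a+\mu p+mp^{s+1})/(C_\theta(mp^{s+1})C_\theta(a+\mu p))$ as a telescoping product of factors $1+q^{\theta+\nu}\q{mp^{s+1}}/\q{\theta+\nu}$, notes $\q{mp^{s+1}}\in\q{p^{s+1}}R$, and refers to~\cite{Dw} for the rest, without spelling out how the non-unit denominators at $\nu\equiv c\bmod p$ are handled. You supply exactly that missing analysis, and do so via the factorization $C_\theta(Np)=\q{p}^N\varphi(C_{\theta'}(N))g(N)$, which is the same device the paper itself deploys to prove Lemma~\ref{Dw12}; this eliminates the $\varphi(C_{\theta'})$ denominators at once, isolates the unit part $g(N)$, and makes the matching of $\q{p}$-powers and of the one non-unit $a$-block factor with the correction term $\varphi(1+q^{\theta'+\mu}\q{mp^s}/\q{\theta'+\mu})$ transparent via Proposition~\ref{prop:qanalogue}~(i),(ii). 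Your closing remark on the telescoping direction is the right observation: grouping $g(\mu+mp^s)/(g(\mu)g(mp^s))$ over $k\in[0,\mu)$ yields factors $1+q^{\theta+pk+j}\q{mp^{s+1}}/\q{\theta+pk+j}$, each in $1+\q{p^{s+1}}R$, whereas grouping over $k\in[0,mp^s)$ would only give factors in $1+\q{p}R$ and lose the required precision; this is the one genuinely delicate step, and your handling of it is correct.
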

	
	\begin{proof}
		By the definition of $C_{\theta}$,
		\[
		\frac{C_\theta(a+\mu p+mp^{s+1})}{C_{\theta}(mp^{s+1})}
		=
		\prod_{\nu=0}^{a+\nu p-1} \q{\theta+mp^{s+1}+\nu}
		\]
		and
		\[
		\frac{C_\theta(a+\mu p+mp^{s+1})}{C_{\theta}(mp^{s+1}) C_\theta(a+\mu p)}
		=\prod_{\nu=0}^{a+\nu p-1} \frac{\q{\theta+mp^{s+1}+\nu}}{\q{\theta+\nu}}
		=\prod_{\nu=0}^{a+\nu p-1} \left( 1+q^{\theta+\nu} \frac{\q{mp^{s+1}}}{\q{\theta+\nu}}\right).
		\]
		We have
		\[
		\q{mp^{s+1}}=\q{p^{s+1}}\left( 1+q^{p^{s+1}}+q^{p^{2(s+1)}}+\cdots +q^{p^{(m-1)(s+1)}} \right) \in \q{p^{s+1}}R,
		\]
		so the proof of Lemma~\ref{Dw11} is almost the same as the Dwork's proof in~\cite[\S1 Lemma 1 p.31]{Dw}.
	\end{proof}
	
	\begin{lemm}~\cite[\S1, Lemma 1 (1,2)]{Dw}
		\label{Dw12}
		Let $\mu,s$ be positive integers. Then,
		\[
		\frac{C_\theta(mp^{s+1})}{\varphi(C_{\theta'}(mp^s))} \equiv \frac{C_1(mp^{s+1})}{\varphi(C_1(mp^s))} \mod 1+\q{p^{s+1}} R.
		\]
	\end{lemm}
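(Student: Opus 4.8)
The plan is to unwind both quotients into products of $q$-integers, split off the factors coming from the multiples of $p$ so as to recognise the denominator $\varphi(C_{\bullet}(mp^{s}))$, and then compare the two remaining ``prime-to-$p$'' products modulo $\q{p^{s+1}}R$ by a residue count. Two elementary facts about $q$-numbers will be used. (a) If $x,y\in\Z_p$ satisfy $x\equiv y\pmod{p^{s+1}\Z_p}$, then $\q{x}\equiv\q{y}\pmod{\q{p^{s+1}}R}$: indeed $\q{x}-\q{y}=q^{y}\q{x-y}$, and writing $x-y=p^{s+1}w$ the identity $\q{ab}=\q{a}\cdot\dfrac{q^{ab}-1}{q^{a}-1}$ with $a=p^{s+1}$, $b=w$ (the second factor being $\sum_{i\ge 1}\binom{w}{i}(q^{p^{s+1}}-1)^{i-1}\in R$) shows $\q{x-y}\in\q{p^{s+1}}R$. (b) If $p\nmid x$ then $\q{x}\in R^{\times}$, since $\q{x}\equiv x\pmod{(p,q-1)R}$ and $R=\Z_p[[q-1]]$ is local with maximal ideal $(p,q-1)$.

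Next I would unwind the quotient. Write $\theta=p\theta'-a_{0}$ with $a_{0}=p\theta'-\theta\in\{0,\dots,p-1\}$, and group the factors of $C_\theta(mp^{s+1})=\prod_{j=0}^{mp^{s+1}-1}\q{\theta+j}$ according to $j\bmod p$. The indices $j\in[0,mp^{s+1}-1]$ with $j\equiv a_{0}\pmod p$ are exactly $j=a_{0}+kp$ with $0\le k<mp^{s}$, and for these $\theta+j=p(\theta'+k)$, so by Proposition~\ref{prop:qanalogue}~(ii) their joint contribution is $\q{p}^{mp^{s}}\varphi\bigl(C_{\theta'}(mp^{s})\bigr)$. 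Hence
\[
\frac{C_\theta(mp^{s+1})}{\varphi(C_{\theta'}(mp^{s}))}=\q{p}^{mp^{s}}\,P_\theta,
\qquad
P_\theta:=\prod_{\substack{j'\in\{\theta,\theta+1,\dots,\theta+mp^{s+1}-1\}\\ p\,\nmid\,j'}}\q{j'},
\]
and the same computation with $\theta$ replaced by $1$ (so $\theta'=1$, $a_{0}=p-1$) gives $\dfrac{C_1(mp^{s+1})}{\varphi(C_1(mp^{s}))}=\q{p}^{mp^{s}}P_1$ with $P_1=\prod_{\,1\le j'\le mp^{s+1},\ p\nmid j'}\q{j'}$.

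It remains to prove $P_\theta\equiv P_1\pmod{\q{p^{s+1}}R}$; granting this, $P_1\in R^{\times}$ by fact (b), so $P_\theta/P_1\in 1+\q{p^{s+1}}R$, and the common factor $\q{p}^{mp^{s}}$ cancels in the ratio, which yields exactly the claimed congruence modulo $1+\q{p^{s+1}}R$. For the comparison, observe that $\{\theta+j:0\le j<mp^{s+1}\}$ is an arithmetic progression of length $mp^{s+1}$ and common difference $1$, so its image in $\Z/p^{s+1}$ hits every residue class exactly $m$ times; imposing $p\nmid j'$ then selects, with multiplicity $m$ each, precisely the classes prime to $p$, and the same is true for $\{1,\dots,mp^{s+1}\}$. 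By fact (a) (and fact (b), which makes all factors units so that reordering is legitimate), both $P_\theta$ and $P_1$ are therefore congruent mod $\q{p^{s+1}}R$ to $\prod_{c}\q{c}^{\,m}$, the product over a fixed set of representatives $c$ of the prime-to-$p$ classes modulo $p^{s+1}$. This finishes the proof.

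The argument is essentially bookkeeping; the only points needing a little care are fact (a) — the divisibility behaviour of $\q{\cdot}$ under shifts by $p^{s+1}$ — and the verification that the progression $\{\theta+j\}$ realises each prime-to-$p$ residue class mod $p^{s+1}$ the same number of times as $\{1,\dots,mp^{s+1}\}$, so that the two prime-to-$p$ products carry matching multisets of residues.
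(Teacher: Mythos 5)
Your proof is correct, and its core mechanism is the same as the paper's: split $C_\theta(mp^{s+1})$ into the factors with $p\mid\theta+j$ (which reassemble, via $\q{p(\theta'+k)}=\q{p}\varphi(\q{\theta'+k})$, into $\q{p}^{mp^s}\varphi(C_{\theta'}(mp^s))$) and the prime-to-$p$ factors, then compare the latter through their residues modulo $p^{s+1}$ using the congruence $\q{x}\equiv\q{y}\pmod{\q{p^{s+1}}R}$ for $x\equiv y\pmod{p^{s+1}}$. The one structural difference is that the paper first reduces to the case $m=1$ by invoking Lemma~\ref{Dw11} with $a=0$, $\mu=p^s$ (which makes $\rho(a+\theta-p\theta')=0$ and gives a multiplicative recursion in $m$), and then runs the residue argument only for the single period $\{\theta,\dots,\theta+p^{s+1}-1\}$, which forms a full set of representatives of $(\Z/p^{s+1}\Z)^{\times}$ once the $p$-multiples are removed. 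You instead handle general $m$ in one stroke by observing that the progression $\{\theta+j : 0\le j<mp^{s+1}\}$ covers each residue class modulo $p^{s+1}$ exactly $m$ times, so the two prime-to-$p$ products carry the same multiset of residues. This buys you independence from Lemma~\ref{Dw11} (your proof of this lemma is self-contained) at the cost of a slightly heavier multiset-matching step; both are valid, and your handling of the $q$-integer congruence and unit facts (a), (b) is accurate.
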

	
	\begin{proof}
		By putting $a=0, \mu=p^s$, the proof of Lemma~\ref{Dw12} is reduced to the case $m=1$.
		For $\nu \in \N \cap [0,p^{s+1}-1]$, the condition
		\[
		\theta +\nu \equiv 0 \mod p
		\]
		is equivalent to the condition that there exists $\nu'\in \N \cap [0,p^{s}-1] $ such that
		$
		\nu=(p\theta'-\theta)+p\nu'.
		$
		This condition implies that $\theta+\nu =p(\theta'+\nu')$.
		Thus, we have
		\[
		\frac{C_\theta(p^{s+1})}{\varphi(C_{\theta'}(p^s))}
		=\prod_{p \mid \theta + \nu} \frac{\q{p(\theta'+\nu')}}{\varphi \left(\q{\theta'+\nu'} \right)}
		\prod_{p \nmid \theta +\nu} \q{\theta+\nu}
		=\q{p}^{p^s} \prod_{\substack{p \nmid \theta +\nu\\ \nu \in \N \cap [0,p^{s+1}-1] }}\q{\theta+\nu}.
		\]
		Especially, by putting $\theta=\theta'=1$, we have (replacing $\nu$ with $\nu_1$)
		\[
		\frac{C_1(p^{s+1})}{\varphi(C_{1}(p^s))}=\q{p}^{p^s} \prod_{\substack{p \nmid 1 +\nu_1 \\ \nu_1 \in \N \cap [0,p^{s+1}-1] }}\q{1+\nu_1}.
		\]
		The sets $\left\lbrace \theta+\nu \mid p \nmid \theta +\nu , \nu \in \N \cap [0,p^{s+1}-1] \right\rbrace $
		and $\left\lbrace 1+\nu_1 \mid p \nmid 1 +\nu_1 , \nu_1 \in \N \cap [0,p^{s+1}-1] \right\rbrace $
		are both representatives of $(\Z / p^{s+1}\Z)^{\times}$, so they have one-to-one correspondence.
		Namely for all $\nu$, there is a unique $\nu_1$ such that
		\[
		\theta+\nu \equiv 1+\nu_1 \mod p^{s+1}.
		\]
		Since $(1+\nu_1,p)=1$, we have $\theta+\nu =(1+\nu_1)(1+p^{s+1}a_{\nu})$ for some $a_\nu \in \Z_p$.
		Thus, we have
		\[
		\frac{C_\theta(p^{s+1})}{\varphi(C_{\theta'}(p^s))}
		=\q{p}^{p^s} \prod_{1+\nu_1} \q{(1+\nu_1)(1+p^{s+1}a_{\nu})}
		=\q{p}^{p^s} \prod_{1+\nu_1} \q{1+\nu_1} \left( 1+q^{1+\nu_1} \frac{\q{p^{s+1}a_{\nu}(1+\nu_1)}}{\q{1+\nu_1}} \right).
		\]
		This is congruent to $\frac{C_1(p^{s+1})}{\varphi(C_{1}(p^s))}\mod 1+\q{p^{s+1}} R$, which follows from $\frac{\q{p^{s+1}a_{\nu}(1+\nu_1)}}{\q{1+\nu_1}} \in \q{p^s+1} R$.
	\end{proof}
	
	Put $A_\theta(n)=C_\theta(n)/ C_1(n) $ for a non-negative number $n$.
	
	\begin{cor}~\cite[\S1, Corollary 2]{Dw}
		Assume that $\theta =\frac{1}{2}$. (Then we have $\theta'=\frac{1}{2}$ by the definition.)
		\begin{enumerate}[(i)]
			\label{Dw13}
			\item \label{Dw13lem}
			Assume $a > \frac{p-1}{2}$. Then
			\[
			\frac{A_\frac{1}{2}(a+\mu p)}{\varphi(A_\frac{1}{2}(\mu))}
			\equiv 0 \mod \q{p^{1+v_p \left(\mu+\frac{1}{2} \right)}} R.
			\]
			\item \label{Dw13prop}
			We have
			\[
			\frac{A_\frac{1}{2}(n+mp^{s+1})}{\varphi \left(A_\frac{1}{2} \left(\left[\frac{n}{p} \right]+mp^s \right) \right)} \equiv \frac{A_\frac{1}{2}(n)}{\varphi \left(A_\frac{1}{2} \left(\left[\frac{n}{p} \right] \right) \right)} \mod \q{p^{s+1}} R.
			\]
		\end{enumerate}
	\end{cor}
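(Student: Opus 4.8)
The plan is to prove part~(i) by an explicit manipulation of the products $C_{\frac12}$ and $C_1$, and then to deduce part~(ii) by combining Lemma~\ref{Dw11}, Lemma~\ref{Dw12}, and part~(i). Throughout I use that $\q{x}$ is a unit of $R$ exactly when $p\nmid x$ (as $R$ is local with maximal ideal $(p,q-1)$ and $\q{x}\equiv x\bmod (p,q-1)$) together with the identity $\q{px}=\q{p}\varphi(\q{x})$ of Proposition~\ref{prop:qanalogue}~(ii).

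For part~(i): in $C_{\frac12}(a+\mu p)=\prod_{\nu=0}^{a+\mu p-1}\q{\tfrac12+\nu}$ the factors whose argument is divisible by $p$ are exactly those with $\nu=\tfrac{p-1}{2}+p\nu'$, and a short count shows there are $\mu+1$ of them (namely $\nu'=0,\dots,\mu$) precisely because $a>\tfrac{p-1}{2}$; all other factors are units of $R$. Rewriting each divisible factor as $\q{p(\tfrac12+\nu')}=\q{p}\varphi(\q{\tfrac12+\nu'})$ and comparing with $\varphi(C_{\frac12}(\mu))=\prod_{\nu=0}^{\mu-1}\varphi(\q{\tfrac12+\nu})$ gives $C_{\frac12}(a+\mu p)/\varphi(C_{\frac12}(\mu))=\q{p}^{\,\mu+1}\varphi(\q{\mu+\tfrac12})\cdot(\text{unit})$. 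The same bookkeeping for $C_1(a+\mu p)=\prod_{j=1}^{a+\mu p}\q{j}$ produces only $\mu$ divisible factors (since $a\le p-1$), hence $C_1(a+\mu p)/\varphi(C_1(\mu))=\q{p}^{\,\mu}\cdot(\text{unit})$. Dividing,
\[
\frac{A_{\frac12}(a+\mu p)}{\varphi(A_{\frac12}(\mu))}=\q{p}\,\varphi\!\left(\q{\mu+\tfrac12}\right)\cdot u,\qquad u\in R^\times .
\]
Writing $k=v_p(\mu+\tfrac12)$ and $\mu+\tfrac12=p^k u_0$ with $v_p(u_0)=0$, iterated application of $\q{px}=\q{p}\varphi(\q{x})$ and the telescoping $\prod_{j=0}^{k-1}\varphi^{j}(\q{p})=\q{p^k}$ give $\q{\mu+\tfrac12}=\q{p^k}\cdot(\text{unit})$, and one more application gives $\q{p}\varphi(\q{\mu+\tfrac12})=\q{p^{k+1}}\cdot(\text{unit})\in\q{p^{1+v_p(\mu+\frac12)}}R$, which proves~(i).

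For part~(ii), write $n=a+\mu p$ with $a\in[0,p-1]$ and $\mu=[n/p]$. Apply Lemma~\ref{Dw11} twice, with $(\theta,\theta')=(\tfrac12,\tfrac12)$ and with $(\theta,\theta')=(1,1)$, and divide the two congruences (legitimate because $1+\q{p^{s+1}}R$ is a subgroup of $R^\times$), using that $\rho(a+1-p)=0$ since $a\le p-1$. This yields
\[
\frac{A_{\frac12}(n+mp^{s+1})}{\varphi(A_{\frac12}(\mu+mp^s))}\equiv B\cdot\frac{A_{\frac12}(n)}{\varphi(A_{\frac12}(\mu))}\cdot K^{\rho(a-\frac{p-1}{2})}\mod 1+\q{p^{s+1}}R,
\]
where $B=\dfrac{C_{\frac12}(mp^{s+1})/\varphi(C_{\frac12}(mp^s))}{C_1(mp^{s+1})/\varphi(C_1(mp^s))}$ and $K=\varphi\bigl(1+q^{\mu+\frac12}\q{mp^s}/\q{\mu+\frac12}\bigr)$. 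By Lemma~\ref{Dw12} with $\theta=\theta'=\tfrac12$ we get $B\equiv 1\mod 1+\q{p^{s+1}}R$. If $a\le\tfrac{p-1}{2}$ the exponent of $K$ vanishes and we are done. If $a>\tfrac{p-1}{2}$, substitute the identity $\frac{A_{\frac12}(n)}{\varphi(A_{\frac12}(\mu))}=\q{p}\varphi(\q{\mu+\frac12})u$ from the proof of part~(i); since $K-1=q^{p(\mu+\frac12)}\varphi(\q{mp^s})/\varphi(\q{\mu+\frac12})$ and $\varphi(\q{mp^s})=\q{mp^{s+1}}/\q{p}$, the factor $\varphi(\q{\mu+\frac12})$ cancels and
\[
\frac{A_{\frac12}(n)}{\varphi(A_{\frac12}(\mu))}\,(K-1)=u\,q^{p(\mu+\frac12)}\,\q{mp^{s+1}}\in\q{p^{s+1}}R ,
\]
because $\q{mp^{s+1}}\in\q{p^{s+1}}R$ as noted in the proof of Lemma~\ref{Dw11}. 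As a multiplicative congruence modulo $1+\q{p^{s+1}}R$ between elements of $R$ implies the additive congruence modulo $\q{p^{s+1}}R$, we conclude $\frac{A_{\frac12}(n+mp^{s+1})}{\varphi(A_{\frac12}(\mu+mp^s))}\equiv\frac{A_{\frac12}(n)}{\varphi(A_{\frac12}(\mu))}\mod\q{p^{s+1}}R$.

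The step I expect to be the main obstacle is this last case of part~(ii): the correction factor $K$ is \emph{not} congruent to $1$ modulo $1+\q{p^{s+1}}R$ by itself, because it carries $\varphi(\q{\mu+\frac12})$ in the denominator whose valuation can be as large as $v_p(\mu+\tfrac12)$; one must therefore feed in the sharper divisibility of $\frac{A_{\frac12}(n)}{\varphi(A_{\frac12}(\mu))}$ furnished by part~(i) to absorb it. For this reason I would be careful to record, already inside the proof of~(i), the explicit factorization $\frac{A_{\frac12}(n)}{\varphi(A_{\frac12}(\mu))}=\q{p}\varphi(\q{\mu+\frac12})\cdot(\text{unit})$, not merely the divisibility statement.
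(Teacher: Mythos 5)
Your proof is correct, and for part (i) it takes a genuinely different route from the paper. The paper proves (i) via Dwork's cyclotomic bookkeeping: it introduces the counting function $r(N,m)$, expresses $A_{\frac12}(a+\mu p)$ up to units as $\prod_{m\geq 1}\Phi_{p^m}(q)^{r(a+\mu p,m)}$, similarly for $\varphi(A_{\frac12}(\mu))$ with shifted exponents, and then determines the exponents $r(a+\mu p,m)-r(\mu,m-1)$ directly, showing they are $\geq 0$ always and $\geq 1$ for $m\leq 1+v_p(\mu+\tfrac12)$. Your argument instead isolates the $p$-divisible indices in $C_{\frac12}$ and $C_1$, peels off one $\q{p}$ per such index via $\q{px}=\q{p}\varphi(\q{x})$, and arrives at the sharper associate identity $A_{\frac12}(a+\mu p)/\varphi(A_{\frac12}(\mu))=\q{p}\varphi(\q{\mu+\tfrac12})\cdot(\text{unit})$; this buys you a cleaner input to (ii). For part (ii) both proofs apply Lemma~\ref{Dw11} with $\theta=\theta'=\tfrac12$ and with $\theta=\theta'=1$ together with Lemma~\ref{Dw12}, and then must absorb the correction factor $K$ in the range $a>\tfrac{p-1}{2}$ using (i). Here your treatment is more transparent: you exhibit the exact cancellation of $\varphi(\q{\mu+\tfrac12})$ and land directly on $u\,q^{p(\mu+\frac12)}\q{mp^{s+1}}\in\q{p^{s+1}}R$, whereas the paper writes the absorption as a somewhat terse valuation identity in $\text{Frac}(R)$, $\q{p^{1+v_p(\mu+\frac12)}}\varphi\bigl(\q{p^s}/\q{p^{v_p(\mu+\frac12)}}\bigr)=\q{p^{s+1}}$, and leaves the reader to combine it with (i). Your closing remark about why $K$ by itself is not trivial modulo $1+\q{p^{s+1}}R$, and why the factorization in (i) must be recorded explicitly, identifies exactly the delicate point; the paper's presentation glosses over it.
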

	
	\begin{proof}
		\begin{enumerate}[(i)]
			\item Put
			\[
			r(N,m)=\lvert\lbrace\tfrac{1}{2}+i \mid i \in \N \cap [0,N-1], v_p( \tfrac{1}{2}+i ) \geq m \rbrace \rvert -\lvert \lbrace i \mid i \in \N \cap [0,N-1], v_p(i) \geq m \rbrace \rvert.
			\]
			If we write $N=b + p^m c$ ($b,c \in \N, b\in [0,p^m-1]$), we can rewrite $r(N,m)$ by
			\begin{equation*}
			r(N,m)=\begin{cases}
			1 & b\geq \frac{1}{2}(p^m+1) \\
			0 & b < \frac{1}{2}(p^m+1).
			\end{cases}
			\end{equation*}
			Put
			\[
			\Phi_{p^m}(q)=\frac{q^{p^m}-1}{q^{p^{m-1}}-1}.
			\]
			Then modulo units, we have $A_{\frac{1}{2}}(a+\mu p)=\left( \prod_{m=1}^{\infty} \Phi_{p^m}(q)^ {r(a+\mu p,m)} \right)$ , and
			\[
			\varphi\left( A_{\frac{1}{2}}\left( \mu \right) \right)
			= \varphi\left( \prod_{m=1}^{\infty} \Phi_{p^m}(q)^ {r \left( \mu ,m \right)} \right)
			= \prod_{m=1}^{\infty} \Phi_{p^{m+1}}(q)^ {r \left( \mu ,m \right)}
			= \prod_{m=2}^{\infty} \Phi_{p^{m}}(q)^ {r \left(\mu ,m-1 \right)}.
			\]
			Therefore, (by setting $r(N,0)=0$ for $N \in \N$) we obtain
			\[
			\frac{A_{\frac{1}{2}}(a+\mu p)}{\varphi\left( A_{\frac{1}{2}}\left( \mu \right) \right) }
			=\left( \prod_{m=1}^{\infty} \Phi_{p^m}(q)^{ r(a+\mu p,m) - r \left(\mu ,m-1 \right)} \right).
			\]
			So we have to determine $r(a+\mu p,m) - r \left(\mu ,m-1 \right)$ for each $m$.
			
			Put $\mu=\nu +\mu' p^{m-1}$ ($\mu',\nu \in \N, \nu \in [0,p^{m-1}-1]$). Then $r(a+\mu p,m)=0$ means $a+\nu p \leq \frac{1}{2} (p^m-1) $. Thus,
			\[
			\nu =\left[ \frac{a+\nu p}{p} \right] \leq \left[ \frac{p^m-1}{2p} \right] \leq \frac{1}{2}(p^{m-1}-1).
			\]
			So $ r \left(\mu ,m-1 \right)=0$ and this implies $r(a+\mu p,m) - r \left(\mu ,m-1 \right) \geq 0$ for all $m$.
			Assume $m-1 \leq v_p(\mu+\frac{1}{2})$, then we obtain
			\begin{equation}
			v_p \left(\nu+\frac{1}{2} \right) \geq m-1, \label{eq:nu}
			\end{equation}
			because $\nu=\mu-\mu'p^{m-1}$.
			The condition $\nu \in [0,p^{m-1}-1]$ and~\eqref{eq:nu} imply $\nu=\frac{1}{2}(p^{m-1}-1)$. Thus, we obtain
			\begin{gather}
			a+\nu p\geq \frac{p+1}{2}+\frac{p}{2}(p^{m-1}-1) = \frac{1}{2}(p^m+1) \label{1dw13} \\
			\nu \leq \frac{1}{2}(p^{m-1}-1). \label{2dw13}
			\end{gather}
			The inequality~\eqref{1dw13} means $r(a+\mu p,m)=1$ and~\eqref{2dw13} means $r \left(\mu,m-1 \right) = 0$, so we obtain
			$
			r(a+\mu p,m) - r \left(\mu ,m-1 \right) = 1
			$
			under the condition $m \leq 1+v_p(\mu+\frac{1}{2})$.
			Therefore,
			\[\frac{A_{\frac{1}{2}}(n)}{\varphi\left( A_{\frac{1}{2}}\left( \left[ \frac{n}{p}\right] \right) \right) } \in \prod_{m=1}^{1+v_p(\mu+\frac{1}{2})} \Phi_{p^{m}}(q)=\q{p^{1+v_p(\mu+\frac{1}{2})}} .
			\]
			
			\item Put $n=a+\mu p$ ($a,\mu \in \N, a\in [0,p-1]$).
			Then,
			\begin{align*}
			\frac{A_\frac{1}{2}(n+mp^{s+1})}{\varphi \left(A_\frac{1}{2} \left(\left[\frac{n}{p} \right]+mp^s \right) \right)}
			& = \frac{\varphi\left( C_1 ( \mu+mp^s ) \right) }{ C_1 (a+\mu p+mp^{s+1})}
			\frac{C_{\frac{1}{2}} (a+\mu p+mp^{s+1})}{\varphi\left( C_{\frac{1}{2}} ( \mu+mp^s ) \right)} \\
			& \equiv \frac{\varphi\left( C_1 ( \mu+mp^s ) \right) }{ C_1 (a+\mu p+mp^{s+1})}
			\frac{C_{\frac{1}{2}}(mp^{s+1})}{\varphi(C_{\frac{1}{2}}(mp^s))} \frac{C_{\frac{1}{2}}(a+\mu p)}{\varphi(C_{\frac{1}{2}}(\mu))} \\
			& \quad \times \left( \varphi \left(1+q^{{\frac{1}{2}}+\mu} \frac{\q{mp^s}}{\q{{\frac{1}{2}}+\mu}} \right)\right)^ {\rho \left(a-{\frac{p-1}{2}}\right)} \mod \q{p^{s+1}}R
			\quad (\text{by Lemma}~\ref{Dw11}) \\
			& \equiv \frac{\varphi\left( C_1 ( \mu+mp^s ) \right) }{ C_1 (a+\mu p+mp^{s+1})}
			\frac{C_1(mp^{s+1})}{\varphi(C_1(mp^s))} \frac{C_{\frac{1}{2}}(a+\mu p)}{\varphi(C_{\frac{1}{2}}(\mu))} \\
			& \quad \times \left( \varphi \left(1+q^{{\frac{1}{2}}+\mu} \frac{\q{mp^s}}{\q{{\frac{1}{2}}+\mu}}\right)\right)^ {\rho \left(a-{\frac{p-1}{2}}\right)}
			\mod \q{p^{s+1}}R
			\quad (\text{by Lemma}~\ref{Dw12}) \\
			& \equiv \frac{A_{\frac{1}{2}}(a+\mu p)}{\varphi(A_{\frac{1}{2}}(\mu))}
			\left( \varphi \left(1+q^{{\frac{1}{2}}+\mu} \frac{\q{mp^s}}{\q{{\frac{1}{2}}+\mu}} \right)\right)^ {\rho \left(a-{\frac{p-1}{2}}\right)} \mod \q{p^{s+1}}R
			\end{align*}
			by Lemma~\ref{Dw11} again. (Use for $\theta=\theta'=1$.)
			Thus, if $a-{\frac{p-1}{2}} \leq 0$,~\eqref{Dw13prop} is clear. Assume $a-{\frac{p-1}{2}} > 0$, then
			\[
			\frac{A_{\frac{1}{2}}(a+\mu p)}{\varphi(A_{\frac{1}{2}}(\mu))}
			\varphi \left(1+q^{{\frac{1}{2}}+\mu} \frac{\q{mp^s}}{\q{{\frac{1}{2}}+\mu}} \right)-
			\frac{A_{\frac{1}{2}}(a+\mu p)}{\varphi(A_{\frac{1}{2}}(\mu))}
			=\frac{A_{\frac{1}{2}}(a+\mu p)}{\varphi(A_{\frac{1}{2}}(\mu))}
			\varphi \left(q^{{\frac{1}{2}}+\mu} \frac{\q{mp^s}}{\q{{\frac{1}{2}}+\mu}} \right).
			\]
			This is congruent to $0$ modulo $\q{p^{s+1}} R$, which follows from
			\[
			\frac{A_{\frac{1}{2}}(a+\mu p)}{\varphi(A_{\frac{1}{2}}(\mu))} \in \q{p^{1+v_p(\mu+\frac{1}{2})}}R
			\quad \text{by}~\eqref{Dw13lem}
			\]
			and
			\[
			\q{p^{1+v_p(\mu+\frac{1}{2})}} \varphi \left(\frac{\q{p^s}}{\q{ p^{v_p(\mu+\frac{1}{2})} }}\right)
			=\q{p} \varphi \left({\q{p^s}}\right)=\q{p^{s+1}}.
			\]
		\end{enumerate}
	\end{proof}
	
	\begin{theo}~\cite[\S2, Theorem 2]{Dw}
		\label{Dw2}
		Let $A=B^{(-1)} , B=B^{(0)},B^{(1)},B^{(2)},\ldots $, be a sequence of functions on $\N$ with values in $Q\coloneqq \text{Frac}(R)$.
		Put
		\[
		F(\la)=\sum_{n=0}^{\infty} A(n)\la^n,\quad G(\la)=\sum_{n=0}^{\infty} B(n)\la^n.
		\]
		Assume for all $n, m, s \in \N, i\geq -1$,
		\begin{enumerate}[(a)]
			\item $\frac{B^{(i)}(n+mp^{s+1})}{\varphi \left( B^{(i+1)} \left(\left[ \frac{n}{p} \right]+mp^s \right) \right)} \label{dw2hypa}
			\equiv \frac{B^{(i)}(n)}{\varphi \left(B^{(i+1)} \left(\left[\frac{n}{p} \right] \right) \right)} \mod \q{p^{s+1}}R $.
			\item $\frac{B^{(i)}(n)}{\varphi \left(B^{(i+1)} \left(\left[\frac{n}{p} \right] \right) \right)} \in R$.
			\item$B^{(i)}(n) \in R$. \label{dw2hypc}
			\item$B^{(i)}(0)\in R^{\times}$. \label{dw2hypd}
		\end{enumerate}
		Then,
		\[
		F(\la)\varphi \left( \sum_{j=mp^s}^{(m+1)p^s-1} B(j) \la^j \right) \equiv \varphi(G(\la)) \sum_{j=mp^{s+1}}^{(m+1)p^{s+1}-1} A(j) \la^j
		\modphib \Rla.
		\label{maindw2}
		\]
	\end{theo}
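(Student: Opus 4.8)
The plan is to prove the asserted congruence by induction on $s$, after extending the statement to all ``levels'' $i\ge -1$; this is legitimate since conditions (a)--(d) are imposed for every $i\ge -1$. Write $F_i(\la)=\sum_{n\ge 0}B^{(i)}(n)\la^{n}$, so that $F_{-1}=F$ and $F_0=G$; by (c) each $F_i$ lies in $\Rla$, and by (d) it is a unit there. For $\mu\in\N$ and $t\ge 0$ let $F_i^{[\mu,t]}=\sum_{j=\mu p^{t}}^{(\mu+1)p^{t}-1}B^{(i)}(j)\la^{j}$ be the $t$-th base-$p$ block of $F_i$ starting at $\mu$, so that $F_i^{[\mu,t]}=\sum_{a=0}^{p-1}F_i^{[\mu p+a,\,t-1]}$, and put
\[
P_i^{(s)}(m):=F_i\,\varphi\bigl(F_{i+1}^{[m,s]}\bigr)-\varphi(F_{i+1})\,F_i^{[m,s+1]}\ \in\ \Rla .
\]
I will show that $P_i^{(s)}(m)\in\q{p^{s+1}}\,\varphi^{s+1}\bigl(B^{(i+s+1)}(m)\bigr)\Rla$ for all $i\ge -1$, $s\ge 0$ and $m\in\N$; the case $i=-1$ is the statement of the theorem.

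For the base case $s=0$ one has $F_{i+1}^{[m,0]}=B^{(i+1)}(m)\la^{m}$, so the claim is a congruence between the coefficients of two power series. All monomials $\la^{N}$ with $N<mp$ have coefficient $0$ on both sides, while for $N=(m+\mu)p+a$ with $\mu\in\N$ and $0\le a<p$ the coefficient of $\la^{N}$ in $P_i^{(0)}(m)$ is $\varphi(B^{(i+1)}(m))\,B^{(i)}(\mu p+a)-\varphi(B^{(i+1)}(\mu))\,B^{(i)}(mp+a)$. Hypothesis (a) with $n=a$ and with its parameter ``$s$'' equal to $0$ (so that $[n/p]=0$), applied once with the running index equal to $m$ and once equal to $\mu$, gives
\[
\frac{B^{(i)}(a+\mu p)}{\varphi(B^{(i+1)}(\mu))}\ \equiv\ \frac{B^{(i)}(a)}{\varphi(B^{(i+1)}(0))}\ \equiv\ \frac{B^{(i)}(a+mp)}{\varphi(B^{(i+1)}(m))}\pmod{\q{p}R},
\]
where the three fractions lie in $R$ by (b) and are well defined by (d); multiplying the difference of the two outer fractions by $\varphi(B^{(i+1)}(\mu))\,\varphi(B^{(i+1)}(m))\in R$ (using (c)) shows that this coefficient lies in $\q{p}\,\varphi(B^{(i+1)}(m))R$, as required.

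For the inductive step, fix $i$ and $m$. The block decomposition yields the telescoping identity $P_i^{(s)}(m)=\sum_{a=0}^{p-1}P_i^{(s-1)}(mp+a)$, so by the inductive hypothesis $P_i^{(s-1)}(mp+a)=\q{p^{s}}\,\varphi^{s}\bigl(B^{(i+s)}(mp+a)\bigr)\,h_a$ for some $h_a\in\Rla$. Hypothesis (a) at level $i+s$ with $n=a$ and parameter ``$s$'' equal to $0$, together with (b) and (d), lets me write $B^{(i+s)}(mp+a)=\widetilde\kappa_a\,\varphi\bigl(B^{(i+s+1)}(m)\bigr)+\q{p}\,\varphi\bigl(B^{(i+s+1)}(m)\bigr)\,r_a$ with $\widetilde\kappa_a,r_a\in R$; substituting this and using the identity $\q{p^{s+1}}=\q{p^{s}}\,\varphi^{s}(\q{p})$ (which follows from Proposition~\ref{prop:qanalogue}(ii)) gives
\[
P_i^{(s)}(m)=\q{p^{s}}\,\varphi^{s+1}\bigl(B^{(i+s+1)}(m)\bigr)\Bigl(\sum_{a=0}^{p-1}\varphi^{s}(\widetilde\kappa_a)\,h_a\Bigr)+\q{p^{s+1}}\,\varphi^{s+1}\bigl(B^{(i+s+1)}(m)\bigr)\Bigl(\sum_{a=0}^{p-1}\varphi^{s}(r_a)\,h_a\Bigr).
\]
The second summand already lies in $\q{p^{s+1}}\,\varphi^{s+1}(B^{(i+s+1)}(m))\Rla$, so the inductive step reduces to showing that $\sum_{a=0}^{p-1}\varphi^{s}(\widetilde\kappa_a)\,h_a$ lies in $\varphi^{s}(\q{p})\Rla$.

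This last divisibility is the heart of the argument and the step I expect to be the main obstacle: summing the sub-block congruences naively produces only the factor $\q{p^{s}}$, whereas the target modulus demands the sharper $\q{p^{s+1}}=\q{p^{s}}\,\varphi^{s}(\q{p})$, so the extra factor $\varphi^{s}(\q{p})$ must be recovered. I would do this exactly as in Dwork's proof of~\cite[\S2, Theorem~2]{Dw}: descend the telescoping all the way to blocks of size $1$, so that, by the base-case computation, the coefficient of a fixed $\la^{N}$ in $P_i^{(s)}(m)$ is a difference of two sums of the single function $j\mapsto\varphi(B^{(i+1)}(j))B^{(i)}((M-j)p+a)$ (with $M=[N/p]$, $a=N-Mp$), one over an index set and one over its image under the reflection $j\mapsto M-j$; substituting for $B^{(i)}$ at the large arguments the approximation $\widetilde\kappa_a\,\varphi(B^{(i+1)}(\,\cdot\,))$ supplied by hypothesis (a), the leading terms cancel by that reflection symmetry (which is already the mechanism behind the $s=0$ case), leaving a $\q{p}$-smaller remainder; iterating this reduction, with the level raised and the block size lowered at each pass and hypothesis (a) invoked with progressively larger parameter to control the errors, peels off the powers of $\q{p}$ one at a time until the modulus $\q{p^{s+1}}\varphi^{s+1}(B^{(s)}(m))$ is reached. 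The bookkeeping of the $\q{p^{n}}$'s under $\varphi$ throughout is governed by Proposition~\ref{prop:qanalogue}. Specializing to $i=-1$ then completes the proof.
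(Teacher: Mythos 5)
Your overall framing---a simultaneous induction on $s$ over all levels $i\ge-1$, with the telescoping $P_i^{(s)}(m)=\sum_{a=0}^{p-1}P_i^{(s-1)}(mp+a)$ (this is the paper's~\eqref{dw26}) and the $s=0$ base case---is sound, and your base-case computation is correct. But the inductive step has a genuine gap exactly where you flag it, and the way you propose to close it does not match how the argument actually works.

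Concretely: after substituting the inductive hypothesis into the telescope and splitting $B^{(i+s)}(mp+a)=\widetilde\kappa_a\,\varphi(B^{(i+s+1)}(m))+\q{p}\,\varphi(B^{(i+s+1)}(m))\,r_a$, you reduce to showing $\sum_{a=0}^{p-1}\varphi^{s}(\widetilde\kappa_a)\,h_a\in\varphi^{s}(\q{p})\Rla$. This reduction is not productive, because the $h_a$ are opaque witnesses of divisibility, $h_a=P_i^{(s-1)}(mp+a)\big/\bigl(\q{p^{s}}\varphi^{s}(B^{(i+s)}(mp+a))\bigr)$, about which nothing further is known; there is no term-by-term cancellation to be found here, and summing the $(s-1)$-level congruences over $a$ can never do better than $\q{p^{s}}$. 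Your sketch of how to proceed (``descend the telescoping all the way to blocks of size 1 \ldots the leading terms cancel by that reflection symmetry \ldots iterating this reduction \ldots peels off the powers of $\q{p}$ one at a time'') is not a proof and also misdescribes the mechanism. The anti-symmetry $U_a(j,N)=-U_a(N-j,N)$ that you call ``reflection symmetry'' is used in the paper only once, to establish the global vanishing $\sum_{m=0}^{T}H_a(m,s,N)=0$ for $(T+1)p^{s}>N$; it is not iterated, and it does not by itself produce the extra factor $\q{p}$.

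What actually supplies the missing factor is a different, two-pronged device. First, a supplementary inductive family $(\beta)_{t,s}$ is proved (from $(\beta)_{0,s}$ upward, using $(\alpha)_s$), culminating in $(\beta)_{s,s}$, which expresses $H_a(m,s,N+mp^{s})$ as $\varphi^{s+1}(B^{(s)}(m))/\varphi^{s+1}(B^{(s)}(0))$ times $H_a(0,s,N)$, up to an error already of the target size $\q{p^{s+1}}\varphi^{s+1}(B^{(s)}(m))R$. This recenters the whole problem on the single quantity $H_a(0,s,N)$. Second, the divisibility $H_a(0,s,N)\equiv 0\bmod\q{p^{s+1}}R$ is proved by a minimality argument: if $N'$ were the smallest counterexample, $(\beta)_{s,s}$ would force all $H_a(m,s,N')$ with $m\ge 1$ to be divisible, and then the vanishing identity $\sum_{m}H_a(m,s,N')=0$ would force $H_a(0,s,N')$ to be divisible too, a contradiction. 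None of this appears in your proposal, and your decomposition into $\widetilde\kappa_a$ and $r_a$ parts does not lead to it. To repair your write-up you would have to abandon the per-$a$ reduction and instead introduce the $(\beta)_{t,s}$ bridge and the minimum argument, which is essentially to reproduce the paper's proof.
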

	
	\begin{proof}
		Let $n=a+pN$ ($a \in \N \cap [0,p-1]$). So the coefficient of $\la^n$ on the left side of~\eqref{maindw2} is
		\[
		\sum_{j=mp^s}^{(m+1)p^s-1} A(n-pj) \varphi(B(j)),
		\]
		and the coefficient of $\la^n$ on the right side of~\eqref{maindw2} is
		\[
		\sum_{j=mp^s}^{(m+1)p^s-1} \varphi(B(N-j)) A(a+pj) .
		\]
		Let
		\begin{gather*}
		U_a(j,N)=A(a+p(N-j))\varphi(B(j)) - \varphi(B(N-j)) A(a+pj), \\
		H_a(m,s,N)=\sum_{j=mp^s}^{(m+1)p^s-1} U_a(j,N).
		\end{gather*}
		Then what we have to show is
		\begin{equation}
		H_a(m,s,N) \equiv 0 \mod \q{p^{s+1}} \varphi^{s+1}( B^{(s)}(m))R. \label{dw22}
		\end{equation}
		Since $a \in [0,p-1]$, we have $U_a(j,N)=0$ for $j>N$. So
		\begin{equation}
		H_a(m,s,N)=0 \quad \text{for} \quad N<mp^s. \label{dw24}
		\end{equation}
		
		In preparation for the proof of this theorem, we note some facts. The proof of these facts is almost the same as~\cite{Dw}.
		\begin{gather}
		\sum_{m=0}^T H_a(m,s,N)=0 \quad \text{for} \quad (T+1)p^s>N. \label{dw25} \\
		H_a(m,s,N)=\sum_{\mu=0}^{p-1} H_a(\mu+mp,s-1,N) \quad \text{for} \quad s \geq 1. \label{dw26} \\
		B^{(t)}(i+mp^s) \equiv 0 \mod \varphi^s(B^{(s+t)}(m))R
		\quad \text{for} \quad i \in \N \cap [0,p^s-1], s\geq 0, t\geq -1. \label{dw27}
		\end{gather}
		
		We prove~\eqref{dw22} using induction on $s$.
		Put the induction hypothesis
		\[
		(\alpha)_s \colon H_a(m,u,N) \equiv 0 \mod \q{p^{u+1}} \varphi^{u+1}(B^{(u)}(m))R
		\quad \text{for} \quad 0\leq u< s,
		\]
		and the supplementary hypothesis
		\begin{multline*}
		(\beta)_{t,s} \colon H_a(m,s,N+mp^s) \equiv \sum_{j=0}^{p^{s-t}-1}
		\frac{\varphi^{t+1}(B^{(t)}(j+mp^{s-t}) ) H_a(j, t, N)}{\varphi^{t+1}(B^{(t)}(j)) } \\
		\modphib R \quad \text{for} \quad t \in \N \cap [0,s].
		\end{multline*}
		Then $(\alpha)_s$ for all $s\geq 1$ is reduced to the following four claims: \\
		(i) $(\alpha)_1$; (ii) $(\beta)_{0,s}$; (iii) $(\beta)_{t,s}$ and $(\alpha)_s$ imply $(\beta)_{t+1,s}$; (iv) $(\beta)_{s,s}$ implies $(\alpha)_{s+1}$.
		\begin{enumerate}[(i)]
			\item By~\eqref{dw24}, we may assume $N \geq m$.
			By hypothesis~\eqref{dw2hypa},
			\[
			\frac{A(a+p(N-m))}{\varphi(B(N-m))} \equiv \frac{A(a)}{\varphi(B(0))} \mod \q{p}R.
			\]
			Especially if $N=2m$,
			$
			\frac{A(a+pm)}{\varphi(B(m))} \equiv \frac{A(a)}{\varphi(B(0))} \mod \q{p}R.
			$
			Thus,
			\[
			\frac{U_a(m,N)}{\varphi(B(m))\varphi(B(N-m))}
			=\frac{A(a+p(N-m))}{\varphi(B(N-m))}- 	\frac{A(a+pm)}{\varphi(B(m))}
			\equiv 0 \mod \q{p}R.
			\]
			Then by~\eqref{dw2hypc}, $U_a(m,N)\equiv 0 \mod \q{p} \varphi(B(m))R.$
			
			\item We have
			$
			H_a(m,s,N+mp^s)=\sum_{j=0}^{p^s-1}U_a(j+mp^s,N+mp^s)
			$
			and by definition of $U_a$,
			\begin{equation}
			U_a(j+mp^s,N+mp^s)=A(a+p(N-j)) \varphi(B(j+mp^s)) -\varphi(B(N-j)) A(a+pj+mp^{s+1}).
			\label{dw28}
			\end{equation}
			By hypothesis~\eqref{dw2hypa},
			\[
			A(a+pj+mp^{s+1})=\frac{A(a+pj) \varphi(B(j+mp^s))}{\varphi(B(j))}
			+ X_j \varphi(B(j+mp^s))
			\]
			for some $X_j \in \q{p^{s+1}} R$.
			Then the right-hand side of~\eqref{dw28} is
			\begin{align*}
			& \quad A(a+p(N-j)) \varphi(B(j+mp^s))
			-\varphi(B(N-j)) \left(\frac{A(a+pj) \varphi(B(j+mp^s))}{\varphi(B(j))}
			- X_j \varphi(B(j+mp^s))\right) \\
			& =\varphi(B(j+mp^s)) \left(\frac{U_a(j,N)}{\varphi(B(j))}-X_j\varphi(B(N-j)) \right) \\
			& =\varphi(B(j+mp^s)) \left(\frac{H_a(j,0,N)}{\varphi(B(j))}-X_j\varphi(B(N-j)) \right) \\
			& =\frac{\varphi(B(j+mp^s))H_a(j,0,N)}{\varphi(B(j))}-X_j\varphi(B(j+mp^s)) \varphi(B(N-j)). 
			\end{align*}
			We have $\varphi(B(j+mp^s)) \equiv 0 \mod \varphi^{s+1}(B^{(s)}(m))R $ by~\eqref{dw27}.
			So, by combining $X_j \in \q{p^{s+1}} R$, we obtain
			$
			X_j \varphi(B(j+mp^s)) \equiv 0 \modphib R,
			$
			namely
			\[
			U_a(j+mp^s,N+mp^s) \equiv \frac{\varphi(B(j+mp^s))H_a(j,0,N)}{\varphi(B(j))}
			\modphib R.
			\]
			Therefore, we obtain
			\[
			H_a(m,s,N+mp^s)\equiv \sum_{j=0}^{p^s-1} \frac{\varphi(B(j+mp^s))H_a(j,0,N)}{\varphi(B(j))}
			\modphib R,
			\]
			which is $(\beta)_{0,s}$.
			
			\item Put $j=\mu +pi$, then the right-hand side of $(\beta)_{t,s}$ is
			\begin{equation}
			\sum_{\mu=0}^{p-1} \sum_{i=0}^{p^{s-t-1}-1}
			\frac{\varphi^{t+1}(B^{(t)}(\mu +pi+mp^{s-t}) ) H_a(\mu +pi, t, N)}
			{\varphi^{t+1}(B^{(t)}(\mu +pi)) }.
			\label{dwstar}
			\end{equation}
			By hypothesis~\eqref{dw2hypa},
			\[
			B^{(t)}(\mu +pi+mp^{s-t})
			=\frac{B^{(t)}(\mu +pi)\varphi(B^{(t+1)}(i+mp^{s-t-1}))}{\varphi(B^{(t+1)}(i))}
			+X_{i,\mu} \varphi(B^{(t+1)}(i+mp^{s-t-1}))
			\]
			for some $X_{i,\mu} \in \q{p^{s-t}} R$.
			Thus, the general term in the double sum of~\eqref{dwstar} is
			\[
			\frac{\varphi^{t+2}(B^{(t+1)}(i+mp^{s-t-1}) ) H_a(\mu +pi, t, N)}
			{\varphi^{t+2}(B^{(t+1)}(i)) }
			+Y_{i,\mu},
			\]
			where
			\[
			Y_{i,\mu}=\varphi\left( X_{i,\mu} \right) \frac{\varphi^{t+2}(B^{(t+1)}(i+mp^{s-t-1}) ) H_a(\mu +pi, t, N)}
			{\varphi^{t+2}(B^{(t+1)}(\mu+pi)) }.
			\]
			By $(\alpha)_s$, $H_a(\mu +pi, t, N)\equiv 0 \mod \q{p^{t+1}}\varphi^{t+1}(B^{(t)}(\mu+pi))R$, so combining $X_{i,\mu} \in \q{p^{s-t}} R$, we obtain
			\begin{align*}
			Y_{i,\mu} & \equiv 0 \mod \q{p^{s+1}} \varphi^{t+2}(B^{(t+1)}(i+mp^{s-t-1}) )R \\
			& \equiv 0 \modphib R \qquad \text{by~\eqref{dw27}}.
			\end{align*}
			Therefore, the right-hand side of $(\beta)_{t,s} \bmod \q{p^{s+1}} \varphi^{s+1} ( B^{(s)}(m) )R$ is
			\begin{align*}
			& \quad \sum_{\mu=0}^{p-1} \sum_{i=0}^{p^{s-t-1}-1}
			\frac{\varphi^{t+2}(B^{(t+1)}(i+mp^{s-t-1}) ) H_a(\mu +pi, t, N)}
			{\varphi^{t+2}(B^{(t+1)}(i)) } \\
			& =\sum_{i=0}^{p^{s-t-1}-1}
			\frac{\varphi^{t+2}(B^{(t+1)}(i+mp^{s-t-1}) ) H_a(i, t+1, N)}
			{\varphi^{t+2}(B^{(t+1)}(i)) } \qquad \text{by~\eqref{dw26}},
			\end{align*}
			which is $(\beta)_{t+1,s}$.
			\item Let us think about the hypothesis
			\[
			(\gamma)_N \colon H_a(0,s,N)\equiv 0 \mod\q{p^{s+1}}R
			\]
			for $n\in \Z$. We know $(\gamma)_N $ is true for $N\leq 0$.
			Suppose that $\{ N\in \N \mid (\gamma)_N \; \text{fails} \; \} \neq \emptyset$ and put $N' =\min\{ N\in \N \mid (\gamma)_N \; \text{fails} \; \}$.
			Then by $(\beta)_{s,s}$ and hypothesis~\eqref{dw2hypd}, we have
			\[
			\varphi^{s+1}(B^{(s)}(0)) H_a(m,s,N')\equiv \varphi^{s+1}(B^{(s)}(m)) H_a(0,s,N'-mp^s)\\
			\modphib R
			\]
			for $m \geq 1$.
			Since $H_a(0,s,N'-mp^s) \equiv 0 \mod \q{p^{s+1}}R$, we have
			$
			H_a(m,s,N') \equiv 0 \mod \q{p^{s+1}}R
			$
			for $m \geq 1$. By~\eqref{dw25},
			$
			H_a(0,s,N') \equiv 0 \mod \q{p^{s+1}} R,
			$
			which contradicts the definition of $N'$.
			
			Therefore, $(\gamma)_N$ is true for all $N\in\N$, and by $(\beta)_{s,s}$ we obtain
			\[
			H_a(m,s,N+mp^s)\equiv 0 \modphib R
			\]
			for all $N\in\Z $, which implies $(\alpha)_{s+1}$.
		\end{enumerate}
	\end{proof}
	
	\begin{theo}~\cite[\S3, Theorem 3]{Dw}
		\label{Dw3}
		Let $B \colon \N \to R$ be a map satisfying conditions (a), (b), (c) of Theorem~\ref{Dw2} and a condition
		\begin{enumerate}
			\item[(d$'$)] $B(0)=1$.
		\end{enumerate}
		(We set $B^{(i)}=B$ for all $i \geq -1$.)
		Put\[
		F(\la)=\sum_{j=0}^{\infty} B(j)\la^j,\quad F_s(\la)=\sum_{j=0}^{p^s-1} B(j)\la^j.
		\]
		Let $T =R\left<\la,\frac{1}{F_1(\la)}\right>$.
		Then,
		\[
		\frac{F_{s+1}(\la)}{\varphi(F_s(\la))}
		\]
		converges to an element of $T^{\times}$.
	\end{theo}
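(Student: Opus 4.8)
The plan is to feed the $m=0$ case of Theorem~\ref{Dw2} into a telescoping argument carried out inside $T$, after a preliminary reduction modulo $(p,q-1)$ that pins down the reductions of the truncations $F_s(\la)$. Since we take $B^{(i)}=B$ for all $i\geq-1$, the hypotheses (a)--(c) of Theorem~\ref{Dw2} are exactly those assumed here, and (d) holds because $B^{(0)}(0)=B(0)=1\in R^{\times}$ by (d$'$); moreover $A=B^{(-1)}=B$ and $G=F$. Taking $m=0$ in Theorem~\ref{Dw2}, and noting $\varphi^{s+1}(B^{(s)}(0))=\varphi^{s+1}(1)=1$, yields for every $s\in\N$ the key congruence
\[
F(\la)\,\varphi\bigl(F_s(\la)\bigr)\;\equiv\;\varphi\bigl(F(\la)\bigr)\,F_{s+1}(\la)\pmod{\q{p^{s+1}}\,\Rla},
\]
which I will call $(\star_s)$. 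Note also that $T$ is well defined, i.e.\ $F_1(\la)\notin(p,q-1)R[\la]$, since $F_1(\la)$ has constant term $B(0)=1$; and as $\varphi(F_1(\la))\equiv F_1(\la)^{p}\pmod{(p,q-1)R[\la]}$, Lemma~\ref{lem:phi} gives $R\langle\la,\tfrac{1}{\varphi(F_1(\la))}\rangle=T$, so that $\varphi$ restricts to an endomorphism of $T$.

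Next I would show that $\varphi(F_s(\la))\in T^{\times}$ for every $s\geq0$, so that $g_s\coloneqq F_{s+1}(\la)/\varphi(F_s(\la))$ is a well-defined element of $T$. Reducing $(\star_0)$ modulo $(p,q-1)$ (where $F_0=1$ and $\varphi$ becomes the $p$-power Frobenius) gives $\overline F(\la)=\overline F(\la)^{p}\,\overline F_1(\la)$ in $\F_p[[\la]]$; iterating this relation, $\overline F(\la)=\overline F_1(\la)^{1+p+\cdots+p^{s-1}}\,\overline F(\la)^{p^{s}}$, and since $\overline F(\la)^{p^{s}}\equiv1\pmod{\la^{p^{s}}}$ while $\overline F_1(\la)^{1+p+\cdots+p^{s-1}}$ has degree $\leq p^{s}-1$, comparing terms of degree $<p^{s}$ yields $\overline F_s(\la)=\overline F_1(\la)^{(p^{s}-1)/(p-1)}$. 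Hence $\overline{\varphi(F_s(\la))}=\overline F_s(\la)^{p}$ is a power of $\overline F_1(\la)$, which is a unit of $T/(p,q-1)T\cong\F_p[\la][\tfrac{1}{\overline F_1(\la)}]$; since $T$ is $(p,q-1)$-adically complete, $\varphi(F_s(\la))\in T^{\times}$.

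For convergence I would run the telescoping argument: multiply $(\star_s)$ by $\varphi(F_{s+1}(\la))$ and $(\star_{s+1})$ by $\varphi(F_s(\la))$ (the latter still holds modulo $\q{p^{s+1}}\Rla$ because $\q{p^{s+1}}$ divides $\q{p^{s+2}}$ in $R$), and subtract; the $F(\la)$-terms cancel, leaving $\varphi(F(\la))\bigl(F_{s+1}(\la)\varphi(F_{s+1}(\la))-F_{s+2}(\la)\varphi(F_s(\la))\bigr)\in\q{p^{s+1}}\Rla$. As $F(\la)$ has constant term $1$ it is a unit of $\Rla$, so $\Delta_s\coloneqq F_{s+1}(\la)\varphi(F_{s+1}(\la))-F_{s+2}(\la)\varphi(F_s(\la))$ lies in $\q{p^{s+1}}\Rla$; being a polynomial, and $\q{p^{s+1}}$ a nonzero element of the domain $R$, it in fact lies in $\q{p^{s+1}}R[\la]\subset\q{p^{s+1}}T$. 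Dividing by the unit $\varphi(F_{s+1}(\la))\varphi(F_s(\la))$ of $T$ gives
\[
g_{s+1}-g_s=\frac{-\Delta_s}{\varphi(F_{s+1}(\la))\,\varphi(F_s(\la))}\in\q{p^{s+1}}T\subset(p,q-1)^{s+1}T
\]
by Proposition~\ref{prop:qanalogue}(iii). Thus $(g_s)_{s\geq0}$ is Cauchy in the $(p,q-1)$-adically complete ring $T$ and converges to some $u\in T$; and since $g_{s+1}-g_s\in(p,q-1)T$ for all $s\geq0$, we get $u\equiv g_0\pmod{(p,q-1)T}$, where $g_0=F_1(\la)/\varphi(F_0(\la))=F_1(\la)$ is a unit of $T$ by construction. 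Therefore $u\in T^{\times}$, as claimed.

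The step I expect to be most delicate is the second one. Everything rests on congruences modulo the principal ideal $\q{p^{s+1}}\Rla$ over the two-dimensional base $R=\Z_p[[q-1]]$, so one must keep track of the fact that $\q{p^{s+1}}$ is a nonzero-divisor (used to transport the congruence from $\Rla$ to $R[\la]$ and then to $T$) and, crucially, that the truncations $\varphi(F_s(\la))$ become invertible \emph{in $T$} rather than merely in $\Rla$. The mod-$(p,q-1)$ identity $\overline F_s=\overline F_1^{(p^{s}-1)/(p-1)}$ is precisely what secures this, and it is the one place where the Frobenius-lifting property of $\varphi$ and the normalization $B(0)=1$ are genuinely used.
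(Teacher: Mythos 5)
Your proof is correct. It reaches the paper's conclusion by the same key congruence from Theorem~\ref{Dw2} with $m=0$, namely $F(\la)\varphi(F_s(\la))\equiv\varphi(F(\la))F_{s+1}(\la)\pmod{\q{p^{s+1}}\Rla}$, but it departs from the paper's proof in two places. First, to show that the truncations are units of $T$, the paper proceeds by induction from the polynomial congruence $F_{s+1}\equiv\varphi(F_s)F_1\pmod{\q{p}R[\la]}$, while you instead derive the explicit closed formula $\overline{F}_s=\overline{F}_1^{(p^s-1)/(p-1)}$ in $\F_p[\la]$ by iterating the relation $\overline{F}=\overline{F}^p\overline{F}_1$; this is a more concrete route to the same fact. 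Second, and more substantively, the paper establishes the Cauchy estimate by anchoring to $F/\varphi(F)\in\Rla^\times$ (showing $f_{s+1}\equiv f_s\pmod{(p,q-1)^{s+1}\Rla}$) and then transporting the congruence to $T$ via injectivity of $T/(p,q-1)^{s+1}T\to\Rla/(p,q-1)^{s+1}\Rla$, using that $R_{s+1}[\la]_{F_1}\hookrightarrow R_{s+1}[[\la]]$. Your telescoping device, subtracting $(\star_s)\cdot\varphi(F_{s+1})$ from $(\star_{s+1})\cdot\varphi(F_s)$ so that the power series $F$ cancels and the remaining expression $\Delta_s$ is a polynomial, lets you pass directly to $\q{p^{s+1}}R[\la]\subset\q{p^{s+1}}T$ by a nonzero-divisor argument and then divide by units of $T$; this bypasses the injectivity lemma entirely and, in my view, is the cleaner of the two arguments. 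Both approaches are valid, and your identification of where the real content lies (Frobenius lifting plus $B(0)=1$ forcing $\varphi(F_s)\in T^\times$) is accurate.
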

	
	\begin{proof}
		By Theorem~\ref{Dw2}, for $i \geq 0, s\geq 0$, $F(\la) \varphi(F_s(\la)) \equiv \varphi(F(\la)) F_{s+1}(\la) \mod \q{p^{s+1}} \Rla.$
		Since $F,F_s \in \Rla^{\times}$ by~(d$'$),
		\begin{equation}
		\frac{F_{s+1}(\la)}{\varphi(F_s(\la)) } \equiv \frac{F(\la)}{ \varphi(F(\la))} \mod \q{p^{s+1}} \Rla.
		\label{dw32}
		\end{equation}
		Especially for $s=0$, $F_{1}(\la) \equiv \frac{F(\la)}{ \varphi(F(\la))} \mod \q{p} \Rla.$
		Hence,
		\[
		\frac{F_{s+1}(\la)}{\varphi(F_s(\la)) } \equiv F_{1}(\la) \mod \q{p} \Rla.
		\]
		Therefore, we have
		\begin{equation}
		F_{s+1}(\la) \equiv \varphi(F_s(\la)) F_{1}(\la) \mod \q{p} R[\la],
		\label{dw33}
		\end{equation}
		since the congruence holds $\bmod \q{p} \Rla$ and both sides are polynomials.
		Clearly we have $F_1 \in T ^{\times}$. Since $\q{p} \in (p,q-1) T $ (Proposition~\ref{prop:qanalogue}), we obtain $F_s \in T ^{\times}$ ($s\in \N$) from~\eqref{dw33} by induction on $s$.
		For $s\in \N$, we put
		\[
		f_s(\la)=\frac{F_{s+1}(\la)}{\varphi(F_{s}(\la))}.
		\]
		Then $f_s \in T ^{\times}$.
		By~\eqref{dw32}, we have
		\begin{equation}
		\label{moddw32}
		f_{s+1} \equiv f_{s} \mod {(p,q-1)}^{s+1}\Rla.
		\end{equation}
		Let $R_{s+1}=R/(p,q-1)^{s+1}$.
		Since $F_1(\la) \in {R_{s+1}[[\la]]}^\times$, the natural homomorphism $R_{s+1} [\la]_{F_1(\la)} \to R_{s+1} [[\la]]$ is injective.
		Then by $R_{s+1} [\la]_{F_1(\la)} =R_{s+1}\left[\la, \frac{1}{F_1(\la)}\right]= T /(p,q-1)^{s+1} T,$ the natural homomorphism
		\[
		T /(p,q-1)^{s+1} T \to R_{s+1} [[\la]]=\Rla /(p,q-1)^{s+1} \Rla
		\]
		is injective.
		Therefore by~\eqref{moddw32}, we have $f_{s+1} \equiv f_{s} \mod {(p,q-1)}^{s+1} T.$
		So by the completeness of $ T $, $\{f_s\}_{s\in \N}$ converges to a unit of $ T $.
	\end{proof}
	
	\begin{cor}
		\label{Dwcor}
		Let $F,F_1$ be same as Theorem~\ref{Dw3}.
		Then, $\frac{d_q F}{F}$ and $\frac{d_qF}{\gamma(F)}$ are elements of $ T =R\left\langle \la,\frac{1}{F_1(\la)}\right\rangle $.
	\end{cor}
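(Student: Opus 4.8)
The plan is to deduce both assertions from the relation $F=u\,\varphi(F)$ for a suitable unit $u$ of $T$, obtained from Theorem~\ref{Dw3}, and then to apply $d_q$ to this relation and solve the resulting recursion inside $T$. First I would put $u=\lim_{s}\frac{F_{s+1}(\la)}{\varphi(F_s(\la))}$, which by Theorem~\ref{Dw3} is a well-defined element of $T^{\times}$. By~\eqref{dw32} we have $\frac{F_{s+1}(\la)}{\varphi(F_s(\la))}\equiv\frac{F(\la)}{\varphi(F(\la))}\bmod\q{p^{s+1}}\Rla$ for every $s$, and $\q{p^{s+1}}\in(p,q-1)^{s+1}R$ by Proposition~\ref{prop:qanalogue}, so, identifying $T$ with its image under the natural injective map $T\to\Rla$ (injectivity is shown in the proof of Theorem~\ref{Dw3}), we get $u=\frac{F}{\varphi(F)}$ in $\Rla$, i.e.\ $F=u\,\varphi(F)$.

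Next I would apply $d_q$ to $F=u\,\varphi(F)$. Using the $q$-Leibniz rule (Proposition~\ref{q-Leibniz}), the identities $d_q\varphi=\q{p}\la^{p-1}\varphi d_q$ and $\gamma\varphi=\varphi\gamma$ from Proposition~\ref{prop:qanalogue}(iv)--(v), and $\gamma(F)=F+(q-1)\la\, d_qF$, a short computation shows that $g\coloneqq\frac{d_qF}{F}\in\Rla$ satisfies
\[
g=\frac{d_q u}{u}+c\,\varphi(g),\qquad c\coloneqq\q{p}\,\la^{p-1}\Bigl(1+(q-1)\la\tfrac{d_q u}{u}\Bigr).
\]
Here $\frac{d_q u}{u}\in T$ because $u\in T^{\times}$ and $d_q$ preserves $T$, and $c\in(p,q-1)T$ because $\q{p}\in(p,q-1)R$ by Proposition~\ref{prop:qanalogue}. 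Note also that $\varphi$ stabilizes $T$: since $\varphi(F_1)\equiv F_1^{p}\not\equiv0\bmod(p,q-1)$, Lemma~\ref{lem:phi} gives $R\langle\la,\tfrac{1}{\varphi(F_1)}\rangle=T$, and moreover $\varphi$ maps the ideal $(p,q-1)$ into itself since $\varphi(q-1)=(q-1)\q{p}$.

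Iterating the displayed identity $n$ times yields
\[
g=\sum_{k=0}^{n-1}\Bigl(\prod_{j=0}^{k-1}\varphi^{j}(c)\Bigr)\varphi^{k}\Bigl(\tfrac{d_q u}{u}\Bigr)+\Bigl(\prod_{j=0}^{n-1}\varphi^{j}(c)\Bigr)\varphi^{n}(g).
\]
Each factor $\varphi^{j}(c)$ lies in $(p,q-1)T$, so the partial sums on the right converge in the $(p,q-1)$-adically complete ring $T$ to some $\tilde g\in T$, while the remainder term lies in $(p,q-1)^{n}\Rla$ and hence tends to $0$ in $\Rla$; since $T\hookrightarrow\Rla$ is continuous and $\Rla$ is separated, this forces $g=\tilde g\in T$, which is the first assertion. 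For the second, from $\gamma(F)=F\bigl(1+(q-1)\la\,g\bigr)$ we get $\frac{d_qF}{\gamma(F)}=\dfrac{g}{1+(q-1)\la\,g}$, and $1+(q-1)\la\,g$ is a unit of $T$ because it is congruent to $1$ modulo $(p,q-1)T$ and $T$ is complete; hence $\frac{d_qF}{\gamma(F)}\in T$ as well. The step I expect to require the most care is the identification $g=\tilde g$: one must handle the $(p,q-1)$-adic topologies of $T$ and $\Rla$ simultaneously and use the continuous embedding $T\hookrightarrow\Rla$, while the convergence of the recursion is driven entirely by the fact that $\q{p}$ is topologically nilpotent in $T$.
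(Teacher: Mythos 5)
Your proof is correct, and for $\frac{d_qF}{F}$ it is essentially the paper's argument: you derive the recursion $g = \frac{d_q u}{u} + c\,\varphi(g)$ with $c = \q{p}\la^{p-1}\,\varphi$-twist factor, iterate, and appeal to the topological nilpotence of $\q{p}$ in $T$ together with the injective continuous map $T\hookrightarrow\Rla$. (Your $c = \q{p}\la^{p-1}\bigl(1+(q-1)\la\frac{d_qu}{u}\bigr)$ equals the paper's $\q{p}\la^{p-1}\frac{\gamma(f)}{f}$ since $\gamma(f)=f+(q-1)\la\,d_qf$, and $\prod_{j<k}\varphi^j(\q{p}\la^{p-1})=\q{p^k}\la^{p^k-1}$, so your compact products expand to exactly the paper's formula.) Where you genuinely diverge is the second assertion: the paper runs the whole recursion machine again for $g^{-1}=\frac{\varphi(F)}{F}$, writing $\frac{d_qF}{\gamma(F)}=-F\,d_q(1/F)$ and iterating once more, whereas you simply observe $\gamma(F)=F\bigl(1+(q-1)\la g\bigr)$ and hence $\frac{d_qF}{\gamma(F)}=\frac{g}{1+(q-1)\la g}$, which lies in $T$ because the already-established $g\in T$ makes $1+(q-1)\la g$ a unit of the $(p,q-1)$-adically complete ring $T$. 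Your shortcut avoids a second pass through the convergence argument; the paper's route is more uniform (both halves use the same template) but does redundant work. You are also a bit more careful than the paper in explicitly recording that $\varphi$ and $d_q$ stabilize $T$, and in justifying the identification $u=F/\varphi(F)$ via the injection $T\hookrightarrow\Rla$, both of which the paper leaves implicit.
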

	
	\begin{proof}
		Put $f=\frac{F}{\varphi(F)}$. By Theorem~\ref{Dw3}, $f, f^{-1}\in T $.
		So we have
		\begin{align*}
		\frac{d_q F}{F} & = \frac{1}{F} d_q(f \varphi(F)) \\
		& = \frac{1}{F} \left(d_q(f) \varphi(F) +\gamma(f) d_q(\varphi(F)) \right) \\
		& = \frac{1}{F} \left(d_q(f) \varphi(F) +\q{p} \la^{p-1} \gamma(f) \varphi(d_q(F)) \right)
		\quad (\text{by Proposition}~\ref{prop:qanalogue}) \\
		& = \frac{d_qf}{f}+\q{p} \la^{p-1}\frac{\gamma(f)}{f} \varphi \left(\frac{d_q F}{F} \right).
		\end{align*}
		By repeating this computation,
		\[
		\frac{d_q F}{F}=\sum_{j=0}^{s-1} \q{p^j} \la^{p^j-1} \left( \prod_{i=0}^{j-1} \varphi^i\left( \frac{\gamma(f)}{f} \right) \right) \varphi^j\left( \frac{d_qf}{f} \right) + \q{p^s} \la^{p^s-1} \left( \prod_{j=0}^{s-1} \varphi^j \left( \frac{\gamma(f)}{f} \right) \right)
		\varphi^s \left( \frac{d_qF}{F} \right).
		\]
		The last term of the right side converges to $0$ in $\Rla$ ($s \to \infty$).
		Put 
		\[
		\eta_s=\sum_{j=0}^{s-1} \q{p^j} \la^{p^j-1} \left( \prod_{i=0}^{j-1} \varphi^i\left( \frac{\gamma(f)}{f} \right) \right) \varphi^j\left( \frac{d_qf}{f} \right).
		\]
		Then by the completeness of $ T $, we obtain $\frac{d_q F}{F}= \lim\limits_{s \to \infty} \eta_s \in T $.
		
		To prove the latter, put $g=f^{-1}=\frac{\varphi(F)}{F}$.
		We have $\frac{d_qF}{\gamma(F)}=-Fd_q \left( \frac{1}{F}\right) $, and
		$
		Fd_q\left(\frac{1}{F}\right)=Fd_q\left(g\frac{1}{\varphi(F) }\right)
		= \frac{d_q g}{g} +\q{p}\la^{p-1} \frac{\gamma(g)}{g} \varphi \left(F d_q \frac{1}{F} \right).
		$
		So discussing similarly, we obtain $\frac{d_qF}{\gamma(F)} \in T $.
	\end{proof}
	
	\section{Proof of the main theorems I}\label{sec:ProofMain}
	\label{proofsec}
	In this section, we prove Theorem~\ref{Main1} and construct $\varphi_{M'}$ of $M'$ satisfying the conditions (i) and (ii) in Theorem~\ref{Main2}.
	
	Let $B(n)=a_n=\prod^{n-1}_{i=0} \left(\frac{[i+\frac{1}{2}]_q}{[i+1]_q} \right)^2\in R.$
	Then by Corollary~\ref{Dw13}, $\{B(n)\}_{n\in \N}$ satisfies the conditions (a), (b), (c) of Theorem~\ref{Dw2} and the condition (d$'$) of Theorem~\ref{Dw3}.
	Therefore, we can apply Theorem~\ref{Dw2}, Theorem~\ref{Dw3}, and Corollary~\ref{Dwcor} to $\{B(n)\}_{n\in \N}$.
	
	Put	$F(\la)= \sum_{n=0}^{\infty} a_n\la^n\in R[[\la]]$ and $F_1(\la)= \sum_{n=0}^{p-1}a_n\lambda^n\in R[\la]$.
	Then, by Theorem~\ref{qhyp_solution}, $F(\la)$ is a solution of the $q$-hypergeometric differential equation~\eqref{eq:qhyp}. Put $S'=R\left\langle \la,\frac{1}{h(\la)}\right\rangle$ as in \S\ref{introduction}. Then~by Lemma~\ref{lem:phi}, we have $S'=R\left\langle \la,\frac{1}{F_1(\la)}\right\rangle =T $.
	
	\begin{proof}[Proof of Theorem~\ref{Main1}]	
		Put $\eta=\frac{d_qF}{F} \in S'$ (Corollary~\ref{Dwcor}), $e'_2=\eta e_1+e_2 \in M'$, and $ U' =S' e'_2\subset M'$. Then by~\eqref{nabla0condition}, $\nabla_q (F e'_2)=\nabla_q(d_qF e_1+F e_2)=0.$
		Put
		\begin{equation}
		\label{nabla2}
		\nabla_q (e'_2)= -\eta' e'_2 \otimes d\la \quad \left(\eta'\in \tfrac{1}{\lambda(1-\la)}S' \right).
		\end{equation}
		Then $\nabla_q (F e'_2)=d_qFe'_2\otimes d\la-\gamma(F)\eta' e'_2 \otimes d\la=0,$ which implies $\eta'=\frac{d_qF}{\gamma(F)}=\eta \frac{F}{\gamma(F)}$.
		(We can determine $\eta'$ directly by the matrix $P$ of \S\ref{DFandConn}.)
		Corollary~\ref{Dwcor} implies $\eta' \in S'$. By Theorem~\ref{Dw3} we have $\frac{F}{\varphi(F)}\in S^{\prime \times}$.
		Define $\varphi_{U'}$ by $\varphi_{U'}(e'_2) =\varepsilon \frac{F}{\varphi(F)} e'_2$, where $\varepsilon=(-1)^{\tfrac{p-1}{2} }$ comes from the classical Frobenius structure~\eqref{unitrootphi} of $U_{B'}$.
		Then the pair $(U', \varphi_{U'})$ satisfies the four conditions in Theorem~\ref{Main1}.
	\end{proof}
	
	\begin{proof}[Proof of Theorems~\ref{Main2} (i), (ii)]
		Put $e_2'=\eta e_1+e_2\in M'$ and $U'=R[[\la]]e_2'\subset M'$ as in the proof of Theorem~\ref{Main1}.
		Let us consider the natural projection $M' \to M' /U'$. Let $\underline{e'_1} \in M'/U' $ be the image of $e'_1$ by this projection. Then,
		\begin{equation}
		\nabla_q(e'_1)=-\frac{1}{\la(1-\la)}e_2 \otimes d\la
		=\left(-\frac{1}{\la(1-\la)}e'_2 +\eta e'_1 \right) \otimes d\la. \label{nabla1}
		\end{equation}
		Therefore, $\nabla_q \left(\underline{e'_1} \right) = \eta \underline{e'_1}\otimes d\la $ and
		\[
		\nabla_q \left(\frac{1}{F}\underline{e'_1} \right)
		=d_q \left(\frac{1}{F}\right) \underline{e'_1}\otimes d\la+ \gamma \left(\frac{1}{F} \right) \nabla_q \left(\underline{e'_1} \right)
		=-\frac{d_q(F)}{F\gamma(F)} \underline{e'_1}\otimes d\la+ \frac{1}{\gamma(F)} \eta \underline{e'_1}\otimes d\la
		=0.
		\]
		Thus, we define a $\varphi$-semilinear endomorphism $\varphi_{M'/U'}$ of $M'/U'$ by $\varphi_{M'/U'}\left(\underline{e'_1}\right) = \varepsilon\q{p} \frac{\varphi(F)}{F} \underline{e'_1}$ so that $\varphi_{M'/U'} \left(\frac{1}{F}\underline{e_1'}\right)$ is a solution of $\nabla_q=0$ and that the reduction modulo $q-1$ of $\varphi_{M'/U'}$ coincides with $\varphi$ of $(H_{B''}/(U_{B'}\otimes_{B'}B''))\otimes_{B''}\Z_p[[\lambda]]$ via~\eqref{Mprime_mod_q-1} (see~\eqref{dRcohphi}). 
		Then its lifting $\varphi_{M'}(e_1')$ should be of the form $\varepsilon \left(\q{p} \frac{\varphi(F)}{F} e'_1 + \q{p}ae'_2 \right)$ for some $a \in \Rla$ by the condition (i) in Theorem~\ref{Main2}.
		
		Let $H$ be the solution of $L=0$ given in Theorem~\ref{qhyp_solution}.
		We have $\nabla_q(d_qH e_1+He_2)=0$ by~\eqref{nabla0condition} and
		\begin{align*}
		d_qH e_1+He_2 & =\la(1-\la) d_qH e'_1+H(e'_2-\la(1-\la)\eta e'_1) \\
		& =\la(1-\la)\frac{Fd_qH-Hd_qF}{F}e'_1+He'_2 \\
		& =\frac{1}{F}e'_1+He'_2.
		\end{align*}
		Here the last equality follows from Lemma~\ref{Wronskian}. Then,
		\begin{align*}
		\varphi_{M'} \left( \begin{pmatrix} e'_1 & e'_2 \end{pmatrix} \begin{pmatrix} 0 &\frac{1}{F} \\ F & H \end{pmatrix} \right) & =
		\begin{pmatrix} e'_1 & e'_2 \end{pmatrix} \varepsilon \begin{pmatrix} \q{p} \tfrac{\varphi(F)}{F} & 0 \\ \q{p}a & \tfrac{F}{\varphi(F)} \end{pmatrix} \begin{pmatrix} 0 &\frac{1}{\varphi(F)} \\\varphi(F) & \varphi(H) \end{pmatrix} \\
		& =\begin{pmatrix} e'_1 & e'_2 \end{pmatrix} \varepsilon\begin{pmatrix}0 &\frac{\q{p}}{F} \\ F &\frac{\q{p}a}{\varphi(F)}+\frac{F\varphi(H)}{\varphi(F)} \end{pmatrix} .
		\end{align*}
		Since $\varphi_{M'} \left(\tfrac{1}{F}e'_1+He'_2\right)$ has to be in the kernel of $\nabla_q$ by the condition (ii) of Theorem~\ref{Main2}, we try to find $a\in R[[\la]]$ such that $\varphi_{M'}\left(\tfrac{1}{F}e'_1+He'_2\right)$ is an $R$-linear combination of $Fe'_2$ and $\tfrac{1}{F}e'_1+He'_2$.
		Suppose that there exists $c \in R$ satisfying
		\[
		\begin{pmatrix} e'_1 & e'_2 \end{pmatrix} \begin{pmatrix}\frac{\q{p}}{F} \\ \frac{\q{p}a}{\varphi(F)}+\frac{F\varphi(H)}{\varphi(F)} \end{pmatrix} =
		\begin{pmatrix} e'_1 & e'_2 \end{pmatrix} \left( \q{p} \begin{pmatrix}\frac{1}{F} \\ H \end{pmatrix} + \q{p}c\begin{pmatrix} 0 \\ F \end{pmatrix} \right),
		\]
		which is equivalent to
		\begin{equation}
		\label{eq:FHac}
		a=\varphi(F)H-\frac{1}{\q{p}}F\varphi(H)+cF\varphi(F) .
		\end{equation}
		We prove
		\begin{equation}
		\label{FHformain}
		\varphi(F)H-\frac{1}{\q{p}}F\varphi(H) \in \Rla.
		\end{equation}
		Put
		\begin{equation*}
		G_1=\sum^{\infty}_{n=1}a_n \lambda^n \sum^{n}_{i=1}\frac{2}{\q{i}} \quad
		G_2=\sum^{\infty}_{n=1}a_n \lambda^n \sum^{n}_{i=1} \left(q-1\right),
		\end{equation*}
		then $
		H=F\log_q\la-F\log_q(1-\la)-G_1-G_2.
		$
		We prove~\eqref{FHformain} by dividing $H$ into these four terms.
		\begin{enumerate}[(i)]
			\item
			$
			\varphi(F)F\log_q \la-\frac{1}{\q{p}}F\varphi(F\log_q\la)
			= \varphi(F)F\log_q \la-\frac{1}{\q{p}}F\varphi(F)\varphi(\log_q\la)
			= 0.
			$
			\item \begin{align*}
			-\varphi(F)F\log_q (1-\la)+\frac{1}{\q{p}}F\varphi(F\log_q(1-\la))
			& = F\varphi(F) \left(\sum_{n=1}^{\infty} \frac{\la^n}{\q{n}}- \frac{1}{\q{p}} \sum_{n=1}^{\infty} \frac{\la^{pn}}{\varphi (\q{n})}\right) \\
			& =F\varphi(F) \sum_{(n,p)=1} \frac{\la^n}{\q{n}}.
			\end{align*}
			This is an element of $\Rla$.
			\item \begin{align*}
			& \quad -\varphi(F)G_1+\frac{1}{\q{p}}F\varphi(G_1) \\
			& =-\varphi(F)\sum^{\infty}_{n=1}a_n \lambda^n \sum^{n}_{i'=1} \frac{2}{\q{i'}}
			+\frac{1}{\q{p}}F\varphi \left(\sum^{\infty}_{n=1}a_n \lambda^n \sum^{n}_{i'=1} \frac{2}{\q{i'}} \right) \\
			& =-2\varphi(F) \sum^{\infty}_{i=1} \frac{1}{\q{i}}\sum^{\infty}_{n=i}a_n \lambda^n+2\frac{1}{\q{p}}F\varphi \left( \sum^{\infty}_{i'=1} \frac{1}{\q{i'}} \left(\sum^{\infty}_{n=i'}a_n \lambda^n\right) \right) \\
			& =-2\varphi(F) \sum_{(i,p)=1} \frac{1}{\q{i}}\sum^{\infty}_{n=i}a_n \lambda^n
			-2\varphi(F) \sum_{p \mid i} \frac{1}{\q{i}}\sum^{\infty}_{n=i}a_n \lambda^n
			+2F \sum^{\infty}_{i'=1} \frac{1}{\q{pi'}} \varphi \left( \sum^{\infty}_{n=i'}a_n \lambda^n\right).
			\end{align*}
			The first term is an element of $\Rla$, and the remaining two terms are equal to
			\begin{align*}
			2 \sum^{\infty}_{i'=1} \frac{1}{\q{pi'}}
			\left(-\varphi(F)\sum^{\infty}_{n=pi'}a_n \lambda^n+F\varphi \left( \sum^{\infty}_{n=i'}a_n \lambda^n \right) \right).
			\end{align*}
			By Theorem~\ref{Dw2},
			\[
			-\varphi(F)\sum^{pi'-1}_{n=0}a_n \lambda^n+F\varphi \left( \sum^{i'-1}_{n=0}a_n \lambda^n \right) \in \q{p^{v_p(i')+1}} R[[\la]]=\q{pi'}R[[\la]],
			\]
			which implies
			$
			-\varphi(F)\sum^{\infty}_{n=pi'}a_n \lambda^n+F\varphi \left( \sum^{\infty}_{n=i'}a_n \lambda^n \right) \in \q{pi'}R[[\la]].
			$
			Therefore, we have
			$
			-\varphi(F)G_1+\frac{1}{\q{p}}F\varphi(G_1) \in \Rla.
			$
			\item
			$ -\varphi(F)G_2+\frac{1}{\q{p}}F\varphi(G_2) =-(q-1)\varphi(F)\sum^{\infty}_{n=1} n a_n \lambda^n +(q-1) F \varphi \left(\sum^{\infty}_{n=1}n a_n \lambda^n \right) \in \Rla.$
		\end{enumerate}
		By adding all of them, we obtain $-\varphi(F)H+\frac{1}{\q{p}}F\varphi(H) \in \Rla.$
		
		We define $\varphi_{M'}$ by choosing $c\in R$ and using $a\in \Rla$ defined by~\eqref{eq:FHac}; we set $\varphi_{M'}(e_2')=\varepsilon\frac{F}{\varphi(F)}e_2'$ as in the proof of Theorem~\ref{Main1}.
		Then, $\varphi_{M'}$ satisfies the condition~\eqref{Main2:cond1} of Theorem~\ref{Main2}.
		We show the condition~\eqref{Main2:cond2} of Theorem~\ref{Main2}.
		We have
		\begin{equation}
		\label{phirep}
		\begin{split}
		\gamma_{M'}(e'_1)&=\frac{\gamma(F)}{F}e'_1-(q-1)\frac{1}{1-\la}e'_2 \qquad \text{by}~\eqref{nabla1} \\
		\gamma_{M'}(e'_2)&=\frac{F}{\gamma(F)}e'_2 \qquad \text{by}~\eqref{nabla2} \\
		\varphi_{M'}(e'_1)&= \varepsilon\q{p} \frac{\varphi(F)}{F} e'_1 + \varepsilon\q{p}ae'_2\\
		\varphi_{M'}(e'_2)&= \varepsilon\frac{F}{\varphi(F)} e'_2.
		\end{split}
		\end{equation}
		The equation $\varphi_{M'}(\gamma_{M'}(e'_2))=\gamma_{M'} (\varphi_{M'}(e'_2))$ follows from $\varphi \circ \gamma=\gamma \circ \varphi$ on $Q((\la))$.
		By calculating $\varphi_{M'}(\gamma_{M'}(e'_1))$ and $\gamma_{M'} (\varphi_{M'}(e'_1))$, the equality $\varphi_{M'} \circ \gamma_{M'}(e'_1)=\gamma_{M'} \circ \varphi_{M'}(e'_1)$ holds if and only if
		\begin{align*}
		& \q{p}a\frac{\varphi(\gamma(F))}{\varphi(F)}
		-(q-1)\q{p} \frac{1}{\varphi(1-\la)}\frac{F}{\varphi(F)}
		=-(q-1)\q{p}\frac{1}{1-\la}\frac{\gamma(\varphi(F))}{\gamma(F)}
		+\q{p}\gamma(a) \frac{F}{\gamma(F)} \\
		\Leftrightarrow & \la d_q \left( \frac{a}{F\varphi(F)} \right)
		=\frac{1}{1-\la} \frac{1}{F\gamma(F)}
		-\varphi \left( \frac{1}{1-\la} \frac{1}{F\gamma(F)} \right).
		\end{align*}
		On the other hand, by~\eqref{eq:FHac}, we have
		$
		\frac{a}{F\varphi(F)} =\frac{H}{F}-\frac{1}{\q{p}} \varphi \left( \frac{H}{F}\right)+c.
		$
		We calculate $d_q \left( \frac{H}{F}\right)$ and $d_q\left( \frac{1}{\q{p}} \varphi\left(\frac{H}{F}\right)\right)$.
		Note that $d_q c=0$ by $c\in R$. We have
		\[
		d_q \left( \frac{H}{F}\right)=d_qH \frac{1}{\gamma(F)}+ Hd_q\left( \frac{1}{F}\right)
		=\frac{Fd_qH-Hd_qF}{F\gamma(F)}
		=\frac{1}{\la(1-\la)F\gamma(F)} \quad (\text{by Lemma}~\ref{Wronskian}),
		\]
		\[
		d_q\left( \frac{1}{\q{p}} \varphi\left(\frac{H}{F}\right)\right)
		=\frac{1}{\q{p}} d_q\left( \varphi\left(\frac{H}{F}\right)\right)
		=\frac{1}{\q{p}} \q{p}\la^{p-1} \varphi\left( d_q\left(\frac{H}{F}\right)\right)
		=\la^{p-1} \varphi\left( \frac{1}{\la(1-\la)F\gamma(F)}\right).
		\]
		Thus, we obtain
		\[
		\la d_q \left( \frac{a}{F\varphi(F)} \right)=
		\la d_q\left( \frac{H}{F}-\frac{1}{\q{p}} \varphi \left( \frac{H}{F}\right)+c \right)
		=\frac{1}{(1-\la)F\gamma(F)} - \varphi\left( \frac{1}{(1-\la)F\gamma(F)}\right)
		\]
		Therefore, we have
		$
		\varphi_{M'} \circ \gamma_{M'}(e'_1)=\gamma_{M'} \circ \varphi_{M'}(e'_1).
		$
		
		In conclusion, if we choose $c \in R$ and define $a$ by~\eqref{eq:FHac}.
		Then $\varphi_{M'}$ defined by~\eqref{phirep}
		satisfies the conditions (i) and (ii) of Theorem~\ref{Main2}.
	\end{proof}
	
	\section{Proof of the main theorems II}
	\label{sec:answer}
	Assume that $R'=\Rla$.
	In this section, we prove the condition (iii) in Theorem~\ref{Main2} holds for $\varphi_{M'}$ constructed in \S\ref{sec:ProofMain} for a suitable $c\in R$.
	First, by taking the image of $\MICHB$ under the functor~\eqref{intro:ARJ} with $a=1$,
	we obtain an $R/(q-1)^2 [[\la]]$-module $ H_{0} \otimes R/(q-1)^2 [[\la]]$ with a filtration, a Frobenius endomorphism and a $\Gamma$-action.
	Second, by taking the image of $(M', \Fil^{\bullet} M', \varphi_{M'}, \rho_{M'})$ in $\text{MF}_{[0,a]}^{\q{p}, q-1}(R'/(q-1)^2,\varphi,\Gamma)$ under the equivalence of categories~\eqref{intro:RJ2} with $a=1$,
	we obtain an $R/(q-1)^2 [[\la]]$-module $M'/(q-1)^2 M'$ with a filtration, a Frobenius endomorphism, and a $\Gamma$-action.
	
	By the construction of the canonical $q$-deformation of $\MICHB$, it suffices to show that there exists an isomorphism $g \colon M'/(q-1)^2M' \to H_0 \otimes R/(q-1)^2 [[\la]]$ in $\text{MF}_{[0,1]}^{\q{p}, q-1}(R'/(q-1)^2,\varphi,\Gamma)$ for a suitable choice of $c \in R$ such that $g$ mod $q-1$ coincides with the isomorphism~\eqref{Mprime_mod_q-1}.
	Let $B_1 \in M_2(\Z_p[[\la]])$, and define an $R/(q-1)^2[[\la]]$-linear lifting $g$ of~\eqref{Mprime_mod_q-1} by
	\[
	g \begin{pmatrix} e'_1 & e'_2 \end{pmatrix} =
	\begin{pmatrix} \overline{e}'_1 \otimes 1 & \overline{e}'_2 \otimes 1 \end{pmatrix}
	(1+(q-1)B_1).
	\]
	It is clear that $g$ is a filtered isomorphism.
	Since $\gamma_{M'}=1+(q-1)D_q^{\log}$, the compatibility of $g$ with the $\Gamma$-actions is equivalent to that of the compatibility with the connections $\bmod$ $q-1$.
	The latter is clear by~\eqref{Mprime_mod_q-1}.
	
	Set $\varphi_{M'/(q-1)^2M'} \begin{pmatrix} e'_1 & e'_2 \end{pmatrix}=\begin{pmatrix} e'_1 & e'_2 \end{pmatrix} (A_0+(q-1) A_1)$ for $A_0, A_1 \in M_2(\Z_p[[\la]])$.
	We choose $c\in R$ such that $a$ defined by~\eqref{eq:FHac} is the lift of $b$ in~\eqref{dRcohphi}; we can show that such a $c$ exists by taking the reduction modulo $q-1$ of the proof of Theorem 4.2 (i), (ii) in \S\ref{sec:ProofMain}.
	Then we have
	\[
	\varphi_{H_{0}}{\otimes\varphi} \begin{pmatrix} \overline{e}'_1 \otimes 1 & \overline{e}'_2 \otimes 1 \end{pmatrix}=\begin{pmatrix} \overline{e}'_1 \otimes 1 & \overline{e}'_2 \otimes 1 \end{pmatrix} A_0.
	\]
	We determine $B_1 \in M_2(\Z_p[[\la]])$ satisfying $\varphi_{H_{0}} \circ g =g \circ \varphi_{M'/(q-1)^2M'}$.
	We have
	\begin{align*}
	(\varphi_{H_{0}}\otimes\varphi)\circ g \begin{pmatrix} e'_1 & e'_2 \end{pmatrix}
	= & \begin{pmatrix} \overline{e}'_1 \otimes 1 & \overline{e}'_2 \otimes 1 \end{pmatrix}
	A_0 \varphi(1+(q-1)B_1), \\
	g \circ \varphi_{M'/(q-1)^2M'} \begin{pmatrix} e'_1 & e'_2 \end{pmatrix}
	= & \begin{pmatrix} \overline{e}'_1 \otimes 1 & \overline{e}'_2 \otimes 1 \end{pmatrix}
	(1+(q-1)B_1)(A_0+(q-1)A_1).
	\end{align*}
	Therefore the compatibility of $g$ with Frobenius is equivalent to
	\begin{align*}
	\quad & A_0 \varphi(1+(q-1)B_1)=(1+(q-1)B_1)(A_0+(q-1)A_1) \\
	\Leftrightarrow & A_0+(q^p-1)A_0\varphi(B_1)=A_0+(q-1)(A_1+B_1 A_0) \quad \text{by } (q-1)^2 \equiv 0 \\
	\Leftrightarrow & \q{p} A_0\varphi(B_1)=A_1+B_1 A_0 \qquad \text{in } M_2(R/(q-1)R[[\la]]) \\
	\Leftrightarrow & B_1- pA_0\varphi(B_1)A_0^{-1} =-A_1A_0^{-1} \qquad \text{in } M_2(\Z_p[[\la]]) .
	\end{align*}
	Let us consider the $\varphi$-semilinear map $\mathcal{F} \colon M_2(\Z_p[[\la]]) \to M_2(\Z_p[[\la]])$ defined by $\mathcal{F}(X)=pA_0\varphi(X)A_0^{-1}$.
	(Since $\det A_0=p^{-1}$, $pA_0\varphi(X)A_0^{-1}$ is an element of $M_2(\Z_p[[\la]])$.)
	
	Then what we want to show is that we can choose $c\in R$ so that $(a\bmod q-1)=b$ and
	\begin{equation}
	\label{eq:B_1}
	\text{there exists $B_1 \in M_2(\Z_p[[\la]])$ satisfying $(1-\mathcal{F})(B_1)=-A_1A_0^{-1}$}.
	\end{equation}
	Note that $A_1A_0^{-1} \in M_2(\Z_p[[\la]])$ because
	$
	(A_0+(q-1) A_1){A_0}^{-1}=1+(q-1)A_1A_0^{-1}
	$
	and, by letting $f= F\mid_{q=1}$, we have
	\begin{align*}
	(A_0+(q-1) A_1){A_0}^{-1}
	& \equiv\begin{pmatrix} \q{p} \frac{\varphi(F)}{F} & 0 \\ \q{p}a & \frac{F}{\varphi(F)} \end{pmatrix}
	\begin{pmatrix} \frac{f}{p\varphi(f)} & 0 \\ -b & \frac{\varphi(f)}{f} \end{pmatrix}
	\mod (q-1)^2 M_2(\Rla) \\
	& \equiv \begin{pmatrix} \frac{\q{p}\varphi(F)f}{pF\varphi(f)} & 0 \\
	a\frac{\q{p}f}{p\varphi(f)}-b\frac{F}{\varphi(F)} & \frac{F\varphi(f)}{\varphi(F)f}\end{pmatrix} \mod (q-1)^2 M_2(\Rla),
	\end{align*}
	which is an element of $M_2(R/(q-1)^2[[\la]])$ by $\q{p}=p\cdot \text{unit}$ in $R/(q-1)^2$.
	
	We first prove the claim modulo $\lambda$. Let $A_1(0)$, $A_2(1)\in R$ be the values of $A_1$ and $A_2$ at $\lambda=0$.
	\begin{lemm}
		\label{FrobEquation}
		We can choose $c\in R$ so that $(a \bmod q-1)=b$ and there exists $ C \in M_2(\Z_p)$ satisfying $C-pA_0(0)CA_0(0)^{-1}=-A_1(0)A_0(0)^{-1}$.
	\end{lemm}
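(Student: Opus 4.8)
The plan is to reduce the assertion to linear algebra over $\Z_p$ for the single matrix $A_0(0)$, and then to exploit the one parameter that remains free in $c$ once the condition $(a\bmod q-1)=b$ has been imposed. First I would record what that condition fixes: reducing~\eqref{eq:FHac} modulo $q-1$ and using the classical identities recalled around~\eqref{dRcohphi} (i.e.\ the reduction modulo $q-1$ of the proof of Theorem~\ref{Main2}(i),(ii) referred to above), one obtains $b=\varphi(f)\mathcal H-\q{p}^{-1}f\varphi(\mathcal H)+c_0 f\varphi(f)$ for a well-defined $c_0\in\Z_p$, where $f=F\bmod(q-1)$ and $\mathcal H=H\bmod(q-1)$; thus $(a\bmod q-1)=b$ forces $c\equiv c_0\pmod{q-1}$. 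However $A_0$ and $A_1$ depend only on $c\bmod(q-1)^2$, so the coefficient $c_1\in\Z_p$ of $q-1$ in $c$ is still free. Evaluating~\eqref{phirep} at $\lambda=0$, where $F=\varphi(F)=1$, and reducing modulo $q-1$ gives $A_0(0)=\varepsilon\begin{pmatrix}p&0\\ pb(0)&1\end{pmatrix}$, which does not involve $c_1$, while the $(2,1)$-entry of $A_1(0)$ equals $\varepsilon\bigl(\binom{p}{2}b(0)+p\,a^{(1)}(0)\bigr)$, in which $a^{(1)}(0)$ (the constant term of the $(q-1)$-coefficient of $a$) is, through the summand $cF\varphi(F)$ of~\eqref{eq:FHac}, an affine function of $c_1$ of slope $1$; the other three entries of $A_1(0)$ do not involve $c_1$.

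Next I would analyse $\mathcal F_0(X):=pA_0(0)XA_0(0)^{-1}$ on $M_2(\Z_p)$, i.e.\ $\mathcal F\bmod\lambda$ (it is integral because $A_0(0)^{-1}=\varepsilon\begin{pmatrix}1/p&0\\-b(0)&1\end{pmatrix}$, the $p$ cancelling the pole). Since $\varepsilon^2=1$, the $\varepsilon$'s drop out of $\mathcal F_0$, and a short computation with this $2\times 2$ matrix shows that $\mathcal F_0$ fixes the elementary matrix $e_{21}=\begin{pmatrix}0&0\\1&0\end{pmatrix}$ and has characteristic polynomial $(T-1)(T-p)^2(T-p^2)$ on $M_2(\Z_p)$ (the eigenvalues being $p$ times the eigenvalue ratios $1,p,1/p,1$ of conjugation by $A_0(0)$). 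As $(T-1)$ and $g(T):=(T-p)^2(T-p^2)$ generate the unit ideal of $\Z_p[T]$, because $g(1)=(1-p)^3(1+p)\in\Z_p^{\times}$, Cayley--Hamilton gives an $\mathcal F_0$-stable decomposition $M_2(\Z_p)=\Z_p e_{21}\oplus V$ with $V=\ker g(\mathcal F_0)$ free of rank $3$, on which $1-\mathcal F_0$ is bijective (its determinant equals $g(1)$, a unit). Hence $\operatorname{Im}(1-\mathcal F_0)=V$, so the equation $C-\mathcal F_0(C)=-A_1(0)A_0(0)^{-1}$ is solvable in $M_2(\Z_p)$ precisely when the $\Z_p e_{21}$-component of $-A_1(0)A_0(0)^{-1}$ with respect to this decomposition vanishes.

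Finally I would kill that obstruction using $c_1$. By the first paragraph, replacing $c_1$ by $c_1+\delta$ with $\delta\in\Z_p$ changes $A_1(0)$ only in its $(2,1)$-entry, by $\varepsilon p\,\delta$, and leaves $A_0(0)$ unchanged; consequently it changes $-A_1(0)A_0(0)^{-1}$ by $-(\varepsilon p\,\delta\,e_{21})A_0(0)^{-1}=-\delta\,e_{21}$, i.e.\ by an arbitrary $\Z_p$-multiple of $e_{21}$, which is exactly the kernel direction of $1-\mathcal F_0$. So I can pick $c$ (with $c\equiv c_0\pmod{q-1}$, so that $a\bmod q-1=b$) for which the $\Z_p e_{21}$-component of $-A_1(0)A_0(0)^{-1}$ is zero; for this $c$ the matrix $C\in M_2(\Z_p)$ with $C-\mathcal F_0(C)=-A_1(0)A_0(0)^{-1}$ exists, which is the claim. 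The main obstacle is precisely this compatibility: one must verify that the free parameter $c_1$ moves $-A_1(0)A_0(0)^{-1}$ along the one-dimensional kernel $\Z_p e_{21}$ of $1-\mathcal F_0$, rather than along a direction already lying in the image (which would be useless). This works because the $p$ in the $(2,1)$-shift of $A_1(0)$ is cancelled by the $1/p$ in $A_0(0)^{-1}$, and it is this cancellation, together with the eigenvalue computation for $\mathcal F_0$, that carries the argument.
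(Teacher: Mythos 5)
Your proof is correct, and it takes a genuinely different route from the paper. The paper writes out $C-pA_0(0)CA_0(0)^{-1}=-A_1(0)A_0(0)^{-1}$ as an explicit $4\times4$ linear system in the entries of $C=\begin{pmatrix}x&y\\z&w\end{pmatrix}$, observes that three of the four equations pin down $x=\tfrac12,\ y=w=0$ while $z$ is free, and then reads off the remaining equation as a compatibility condition on $b_1$ (the $(q-1)$-coefficient of $a|_{\lambda=0}$), which it satisfies by shifting $c$ by a $(q-1)$-multiple. You instead do a structural analysis of $\mathcal F_0(X)=pA_0(0)XA_0(0)^{-1}$: you compute its eigenvalues $1,p,p,p^2$ on $M_2(\Z_p)$, use the coprimality of $T-1$ and $(T-p)^2(T-p^2)$ over $\Z_p$ (resultant a unit) to obtain the $\mathcal F_0$-stable splitting $M_2(\Z_p)=\Z_p e_{21}\oplus V$ with $1-\mathcal F_0$ bijective on $V$, and then note that the free parameter $c_1$ perturbs $-A_1(0)A_0(0)^{-1}$ precisely along the kernel line $\Z_p e_{21}$ (the $p$ from $\binom{p}{2}b_0+pc_1$-type term in $A_1(0)_{21}$ cancelling the $1/p$ in $A_0(0)^{-1}$). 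The two arguments give the same obstruction, with yours being somewhat more conceptual and less dependent on a precise closed form for the constraint on $b_1$. Incidentally, your expression $A_1(0)_{21}=\varepsilon(\binom{p}{2}b_0+pa^{(1)}(0))$ is the correct expansion of $\q{p}a|_{\lambda=0}$ modulo $(q-1)^2$; the paper's displayed $A_1(0)=\begin{pmatrix}\frac{p(p-1)}{2}&0\\pb_1&0\end{pmatrix}$ drops the cross-term $\binom{p}{2}b_0$ (this does not affect the paper's conclusion, since the resulting consistency condition is still an affine equation in $b_1$ with unit slope, but it is worth noting your version is the accurate one).
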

	
	\begin{proof} 
		Choose $c\in R$ such that $(a\bmod q-1)=b$.
		By \S\ref{proofsec}, we have
		\[
		A_0+(q-1)A_1 \equiv \begin{pmatrix} \q{p} \frac{\varphi(F)}{F} & 0 \\ \q{p}a & \frac{F}{\varphi(F)} \end{pmatrix} \mod (q-1)^2 R.
		\]
		Let $b_0, b_1 \in \Z_p$ satisfy $a\mid_{\la=0} = b_0+(q-1)b_1 \mod (q-1)^2 R$. (We have $b \mid_{\la=0}=b_0$.)
		Then, we have
		\[
		A_0(0)=\begin{pmatrix} p & 0 \\ pb_0 & 1 \end{pmatrix}
		\quad
		A_1(0)=\begin{pmatrix} \frac{p(p-1)}{2} & 0 \\ pb_1 & 0 \end{pmatrix}
		\]
		because $F(0)=1$ and $\q{p}=p+(q-1)\binom{p}{2}+(q-1)^2\binom{p}{3}+\cdots$.
		Put $C=\begin{pmatrix} x&y\\z&w \end{pmatrix}.$
		Then,
		\begin{align*}
		C- pA_0(0)CA_0(0)^{-1}
		& =\begin{pmatrix} (1-p)x+p^2 b_0 y& (1-p^2) y \\ -pb_0 x+p^2{b_0}^2y+pb_0 w & -p^2b_0y+(1-p)w \end{pmatrix},
		\end{align*}
		\[
		-A_1(0)A_0(0)^{-1}=
		-\begin{pmatrix} \frac{1}{2}p(p-1) & 0 \\ pb_1 & 0 \end{pmatrix}
		\begin{pmatrix} \frac{1}{p}&0\\-b_0& 1 \end{pmatrix}
		=-\begin{pmatrix} \frac{1}{2}(p-1) & 0 \\ b_1 & 0 \end{pmatrix}.
		\]
		Hence the equality in the lemma holds if and only if
		\begin{gather}
		(1-p)x+p^2b_0y=-\frac{1}{2} (p-1) \label{B_1:11} \\
		(1-p^2) y =0 \label{B_1:12} \\
		-pb_0 x+p^2{b_0}^2y+pb_0 w=-b_1 \label{B_1:21} \\
		-p^2b_0y+(1-p)w=0. \label{B_1:22}
		\end{gather}
		The equations~\eqref{B_1:11},~\eqref{B_1:12}, and~\eqref{B_1:22} are equivalent to $x=\frac{1}{2}, y=w=0$.
		This solution satisfies the equation~\eqref{B_1:21} if and only if $b_1=\frac{1}{2}p b_0.$
		Since $F\varphi(F)\in 1+(q-1)R[[\la]]$, one can replace $c$ by $c+(q-1)c'$ for some $c'\in R[[\lambda]]$ so that this equality holds.	
	\end{proof}
	
	Choose $c\in R$ satisfying the condition in Lemma~\ref{FrobEquation}. We show that~\eqref{eq:B_1} holds for $\varphi_{M'}$ defined by $c$.
	We have $\mathcal{F}^n(\la M_2(\Z_p[[\la]])) \subset \la^{p^{n}} M_2(\Z_p[[\la]])$ for all $n \in \N$. Thus, $\sum_{n=0}^{\infty} \mathcal{F}^n$ converges to an endomorphism of $\la M_2(\Z_p[[\la]])$.
	Therefore, $1-\mathcal{F}$ is bijective on $\la M_2(\Z_p[[\la]])$,
	because $\sum_{n=0}^{\infty} \mathcal{F}^n$ is the inverse of $1-\mathcal{F}$.
	
	We have the following commutative diagram whose two horizontal lines are exact.
	\[
	\xymatrix{
		0  \ar[r] &\la M_2(\Z_p[[\la]])  \ar[d]^{1-\mathcal{F}}_{\cong} \ar[r]^-{i_1} &M_2(\Z_p[[\la]])  \ar[d]^{1-\mathcal{F}} \ar[r]^-{\pi_1} &M_2(\Z_p) \ar[d]^{1-\mathcal{F}(0)} \ar[r] & 0 \\
		0 \ar[r] &\la M_2(\Z_p[[\la]])  \ar[r]^-{i_2} &M_2(\Z_p[[\la]]) \ar[r]^-{\pi_2}&M_2(\Z_p) \ar[r] & 0
	}
	\]
	By the choice of $c$, there exists $C \in M_2(\Z_p)$ satisfying $(1-\mathcal{F}(0))(C)=-A_1(0)A_0(0)^{-1}$.
	Then by the surjectivity of $\pi_1$, there exists $\tilde{C} \in M_2(\Z_p[[\la]]) $ such that
	$\pi_1(\tilde{C})=C$.
	By the commutativity of the right square, $\pi_2(1-\mathcal{F})(\tilde{C})=-A_1(0)A_0(0)^{-1}$,
	and therefore $-A_1A^{-1}_0-(1-\mathcal{F})(\tilde{C})$ lies in the kernel of $\pi_2$.
	By the exactness of the lower horizontal line, there exists $D \in \la M_2(\Z_p[[\la]]) $
	which satisfies $i_2(D)=-A_1A^{-1}_0-(1-\mathcal{F})(\tilde{C})$.
	Put $E=(1-\mathcal{F})^{-1}(D)$ in $\la M_2(\Z_p[[\la]]) $.
	Then
	$
	(1-\mathcal{F}) (i_1(E)+\tilde{C})=-A_1A^{-1}_0-(1-\mathcal{F})(\tilde{C})+(1-\mathcal{F})(\tilde{C})=-A_1A^{-1}_0.
	$
	This completes the proof.
	
	\section{A further topic}
	\label{FurtherTopic}
	Let $R'$ be one of the rings $S'$ and $R[[\la]]$.
	The $\Gamma$-action on $R'$ is geometric in the sense that it defines through the coordinate $\la$ relevant to $q$-connection.
	In this section, we introduce an arithmetic action on $R'$ via the coefficient ring $R$ and show that the unit root part $U'$ of $M'$ admits an arithmetic action.
	
	Let $U'$ be the unit root part of $M'$ given in Theorem~\ref{Main1}.
	For $l \in \Z_p^ {\times}$, we define an automorphism $\sigma_l$ of $S'$, and also of $R[[\la]]$, by $\sigma_l(a)=a$ ($a\in \Z_p$), $\sigma_l(q)=q^l$, and $\sigma_l(\la)=\la$.
	This $\sigma_l$ satisfies $\sigma_l \circ \varphi =\varphi \circ \sigma_l$.
	Let $F\in R[[\la]]$ be the solution of the $q$-differential equation~\eqref{eq:qhyp} given in Theorem~\ref{qhyp_solution}.
	\begin{lemm}
		\label{qHGSolutionArithAction}
		$\frac{F}{\sigma_l(F)}$ is an element of $S^{\prime\times}$.
	\end{lemm}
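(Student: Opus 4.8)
The plan is to deduce the statement from the already–established fact that $F/\varphi(F)\in S^{\prime\times}$ (which follows from Theorem~\ref{Dw3}), exploiting that $\sigma_l$ is an automorphism of $S'$ commuting with $\varphi$ and reducing to the identity modulo $q-1$. The telescoping at the heart of the argument is parallel to the proof of Corollary~\ref{Dwcor}.

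First I would observe that $u:=F/\sigma_l(F)$ lies in $\Rla^{\times}$ with $u\equiv 1\bmod(q-1)$: both $F$ and $\sigma_l(F)$ lie in $\Rla$ with constant term $a_0=1$, and $\sigma_l$ induces the identity on $R/(q-1)=\Z_p$. Next, put $f:=F/\varphi(F)\in S^{\prime\times}$ and $w:=f/\sigma_l(f)$. Using $\sigma_l\circ\varphi=\varphi\circ\sigma_l$, a short computation gives
\[
\frac{u}{\varphi(u)}=\frac{F\,\varphi(\sigma_l(F))}{\sigma_l(F)\,\varphi(F)}=\frac{F}{\varphi(F)}\cdot\sigma_l\!\left(\frac{F}{\varphi(F)}\right)^{-1}=w ,
\]
and since $\sigma_l$ preserves $S^{\prime\times}$, we have $w\in S^{\prime\times}$, again with $w\equiv 1\bmod(q-1)$.

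From $u=w\,\varphi(u)$ one obtains by iteration, for every $s\in\N$, the identity $u=\bigl(\prod_{j=0}^{s-1}\varphi^{j}(w)\bigr)\varphi^{s}(u)$ in $Q[[\la]]$. I would then analyse the two factors. Writing $u=1+(q-1)v$ with $v\in\Rla$ and using $\varphi^{s}(q-1)=(q-1)\q{p^{s}}$ together with $\q{p^{s}}\in(p,q-1)^{s}R$ (Proposition~\ref{prop:qanalogue}), one finds $\varphi^{s}(u)-1\in(p,q-1)^{s+1}\Rla$, so $\varphi^{s}(u)\to 1$ for the $(p,q-1)$-adic filtration topology on $\Rla$. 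Writing $w=1+(q-1)w_1$ with $w_1\in S'$, the same estimate shows that the partial products $P_s:=\prod_{j=0}^{s-1}\varphi^{j}(w)$ form a Cauchy sequence in $S'$, hence converge to some $\tilde u\in S'$ by the $(p,q-1)$-adic completeness of $S'$. Since $S'\hookrightarrow\Rla$ is continuous for these topologies and $\Rla$ is separated for the $(p,q-1)$-adic filtration, passing to the limit in $u=P_s\,\varphi^{s}(u)$ gives $u=\tilde u\in S'$; as $u\equiv 1\bmod(p,q-1)$ and $S'$ is $(p,q-1)$-adically complete, $u\in S^{\prime\times}$.

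The one delicate point is the final passage to the limit, where one must check that the $(p,q-1)$-adic limit of $(P_s)$ in $S'$ agrees with its limit in $\Rla$, which by the telescoping identity equals $u$. A more self-contained alternative would be to prove a formal congruence in the spirit of Theorem~\ref{Dw2} with $\sigma_l$ in place of $\varphi$, comparing $F$ and $\sigma_l(F)$ directly through their truncations $\sum_{n<p^{s}}a_n\la^{n}$; this avoids factoring through $\varphi$ but requires redoing the congruence analysis of \S\ref{Dwsec}, so I expect the route above to be the more economical one.
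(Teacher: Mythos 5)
Your proof is correct and takes essentially the same route as the paper: both rewrite $F/\sigma_l(F)$ as a product of $S^{\prime\times}$-terms $\prod_{j<s}\varphi^j(f/\sigma_l(f))$ times a tail $\varphi^s(F/\sigma_l(F))$ tending to $1$, and then invoke the convergence argument from Theorem~\ref{Dw3}. The only cosmetic difference is that you obtain the tail estimate $\varphi^s(u)\equiv 1 \bmod (p,q-1)^{s+1}$ abstractly from $u\equiv 1 \bmod (q-1)$ and $\varphi^s(q-1)=(q-1)\q{p^s}$, whereas the paper extracts the same kind of bound by splitting each coefficient $a_n$ as $a_n'+(q-1)a_n''$; both hinge on the same fact that $\varphi$ increases $(q-1)$-adic order.
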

	
	\begin{proof}
		For each $n$, put $a'_n=a_n \mid_{q=1} \in \Z_p$.
		Then there is a unique $a''_n \in R$ such that $a_n=a'_n+(q-1)a''_n$.
		For $r \geq 0$, we have
		\begin{align*}
		\varphi^{r+1}(F)=\sum_{n=0}^{\infty} \varphi^{r+1}(a_n) \la^{p^{r+1} n}
		& =\sum_{n=0}^{\infty} (a'_n+ \varphi^{r+1}(q-1)(a''_n)) \la^{p^{r+1} n} \\
		& \equiv \sum_{n=0}^{\infty} a'_n \la^{p^{r+1} n} \mod \q{p^{r+1}},
		\end{align*}
		and similarly we have $\sigma_l( \varphi^{r+1}(F))\equiv \sum_{n=0}^{\infty} a'_n \la^{p^{r+1} n} \mod \q{p^{r+1}}$.
		Therefore,
		$
		\varphi^{r+1}(F) \equiv \sigma_l( \varphi^{r+1}(F)) \mod {(p,q-1)}^{r+1}.
		$
		Since $\sigma_l( \varphi^{r+1}(F)) $ is a unit of $\Rla$, we have
		\begin{equation}
		\frac{\varphi^{r+1}(F)}{\sigma_l( \varphi^{r+1}(F))} \equiv 1\mod {(p,q-1)}^{r+1}.
		\end{equation}
		Put $f=\frac{F}{\varphi(F)}\in S^{\prime\times}$. Then we have $\frac{F}{\sigma_l(F)}=\frac{f\varphi(f)\cdots\varphi^r(f)}{\sigma_l(f\varphi(f)\cdots\varphi^r(f))}\cdot\frac{\varphi^{r+1}(F)}{\sigma_l(\varphi^{r+1}(F))}$, and the first term of the right-hand side is contained in $S^{\prime\times}$. Hence,
		the same argument as the proof of Theorem~\ref{Dw3} shows that $\frac{F}{\sigma_l(F)}$ is an element of $S^{\prime\times}$.
	\end{proof}
	
	Let $\widehat{\Gamma}$ be the inverse limit $\varprojlim_n\Gamma/\Gamma^{p^n}$, which is isomorphic to $\Z_p$. For $R'=S', R[[\la]]$, the triviality modulo $q-1$ of the action of $\Gamma$ on $R'$ implies that the action is continuous with respect to the $p$-adic topology of $\Gamma$ and the $(p,q-1)$-adic topology of $R'$. Therefore the action of $\Gamma$ on $R'$ uniquely extends to a continuous action $\widehat{\rho}\colon \widehat{\Gamma}\to \text{Aut}(R')$ of $\widehat{\Gamma}$ on $R'$.
	We have $\widehat{\rho}(\gamma^m)(a)=a$ $(a\in R)$ and $\widehat{\rho}(\gamma^m)(\la) =q^m\la$ for $m\in \Z_p$.
	Similarly, the action $\rho_{U'}$ of $\Gamma$ on $U'$ uniquely extends to a continuous $\widehat{\rho}$-linear action $\widehat{\rho}_{U'}\colon \widehat{\Gamma}\to \text{Aut}(U')$ of $\widehat{\Gamma}$ on $U'$.
	The formula $\nabla_q(e_2')=-\eta\frac{F}{\gamma(F)}e_2' \otimes d\lambda$ shown in the proof of Theorem~\ref{Main1} implies $\rho_{U'}(e_2')=\frac{F}{\gamma(F)}e_2'$. Hence we have
	\[
	\widehat{\rho}_{U'}(\gamma^m)(e'_2)=\frac{F}{\gamma^m(F)}e'_2
	\]
	for $m\in \Z_p$.
			
	Let $\Gamma'$ be the group $\Z_p^{\times}$.
	We define a homomorphism $\rho' \colon \Gamma' \to \text{Aut}(S')$ trivial modulo $q-1$ by $\rho'(l)(s)=\sigma_l(s)$.
	By Lemma~\ref{qHGSolutionArithAction}, we can define a $\sigma_l$-semilinear automorphism $\sigma_{l,U'}$ of $U'$ by
	\[
	\sigma_{l,U'}(e'_2)=\frac{F}{\sigma_l(F)}e'_2.
	\]
	We define the $\rho'$-semilinear action $ \rho'_{U'} \colon \Gamma' \to \text{Aut}(U')$ of $\Gamma'$ on $U'$ by $\rho'_{U'}(l)(u)=\sigma_{l,U'}(u)$.
	
	Let $\Gamma'\ltimes\widehat{\Gamma}$ be the semi-direct product defined by the canonical action of $\Gamma'=\Z_p^{\times}$ on $\widehat{\Gamma}\cong\Z_p$.
	Since $\sigma_l\widehat{\rho}(\gamma^m)\sigma_l^{-1}=\widehat{\rho}(\gamma^{ml})$ on $S'$ and on $R[[\la]]$ for $l\in \Z_p^{\times}$ and $m\in \Z_p$, we can define an action $\rho' \ast \widehat{\rho} \colon \Gamma'\ltimes\widehat{\Gamma} \to \text{Aut}(S')$ by $\widehat{\rho}$ and $ \rho'$,
	and a $\rho' \ast \widehat{\rho}$-semilinear action $\left(\rho' \ast \widehat{\rho} \right)_{U'} \colon \Gamma'\ltimes\widehat{\Gamma} \to \text{Aut}(U')$ by $\widehat{\rho}_{U'}$ and $\rho'_{U'}$.
	
	By using $\varphi\circ \sigma_l=\sigma_l \circ \varphi$ on $R[[\la]]$, we see that the triplet $\left( U',\varphi_{U'},\left(\rho' \ast \widehat{\rho}\right)_{U'} \right)$ is an object of the category $\text{MF}_{[0,0]}^{\q{p}, q-1} \left(S', \varphi, \Gamma'\ltimes\widehat\Gamma\right)$, whose image under the equivalence of categories (\cite[Proposition 56]{Tsu})
	\[
	\text{MF}_{[0,0]}^{\q{p},q-1}\left(S',\varphi,\Gamma'\ltimes \widehat{\Gamma}\right)
	\xrightarrow{\bmod q-1} 
	\text{MF}_{[0,0]}(B',\varphi)
	\]
	is $\left( U_{B'},\varphi_{U_{B'}} \right)$.

\end{document}